\documentclass{amsart}
\pdfoutput=1

\setlength{\textwidth}{170mm} \setlength{\textheight}{8.0in} \setlength{\oddsidemargin}{-0.225cm} \setlength{\evensidemargin}{-0.225cm} \setlength{\footskip}{30pt} \addtolength{\textheight}{.695in} \addtolength{\voffset}{-.55in} 
\setlength{\parindent}{15pt} 



\usepackage{amsmath, amssymb, amsthm, mathtools, enumitem}
\usepackage{accents, xcolor}
\usepackage{cancel}
\usepackage{amssymb}
\usepackage{amsthm}
\usepackage{amsfonts}
\usepackage{amsmath}
\usepackage{pmboxdraw}
\usepackage{verbatim} 
\usepackage{graphicx}
\usepackage{color}
\usepackage[colorlinks=true, citecolor=blue, filecolor=black, linkcolor=black, urlcolor=black]{hyperref}
\usepackage{cite}
\usepackage[normalem]{ulem}
\usepackage{subcaption}
\usepackage{todonotes}
\usepackage{kantlipsum}
\allowdisplaybreaks
\usepackage{bbm}
\usepackage{tikz}
\usepackage{pgfplots} 
\usetikzlibrary{decorations, arrows.meta, patterns}

\newtheorem{theorem}{Theorem}[section]
\newtheorem{lemma}[theorem]{Lemma}

\theoremstyle{definition}

\theoremstyle{remark}

\newcommand{\erfc}{\operatorname{erfc}}

\newcommand{\gaussF}{{}_{2} {F}_1}
\newcommand{\gausF}{{}_{1} {F}_1}
\newcommand{\Airy}{\operatorname{Ai}}
\newcommand{\re}{\operatorname{Re}}

\newcommand{\im}{\operatorname{Im}}
\newcommand{\sgn}{\operatorname{sgn}}
\newcommand{\Pf}{{\textup{Pf}}}
\newcommand{\Tr}{\operatorname{Tr}}
\newcommand{\rhohat}{\mathbf{\hat{\rho}}}

\def\C{\mathbb{C}}

\def\E{\mathbb{E}}
\def\HH{\mathbb{H}}

\def\R{\mathbb{R}}

\newcommand{\vz}{\boldsymbol{z}}
\newcommand{\vlambda}{\boldsymbol{\lambda}}
\newcommand{\vmu}{\boldsymbol{\mu}}
\newcommand{\vzeta}{\boldsymbol{\zeta}}

\numberwithin{equation}{section}

\title[The probability of almost all eigenvalues being real for the elliptic GinOE]{The probability of almost all eigenvalues being real\\for the elliptic real Ginibre ensemble}

\author{Gernot Akemann}
\address{ Faculty of Physics, Bielefeld University, P.O. Box 100131, 33501 Bielefeld, Germany}
\email{akemann@physik.uni-bielefeld.de}

\author{Sung-Soo Byun}
\address{Department of Mathematical Sciences and Research Institute of Mathematics, Seoul National University, Seoul 151-747, Republic of Korea}
\email{sungsoobyun@snu.ac.kr}

\author{Yong-Woo Lee}
\thanks{Corresponding author: Yong-Woo Lee (\texttt{hellowoo@snu.ac.kr})}
\address{Department of Mathematical Sciences, Seoul National University, Seoul 151-747, Republic of Korea}
\email{hellowoo@snu.ac.kr}

\begin{document}

\begin{abstract}
We investigate real eigenvalues of real elliptic Ginibre matrices of size $n$, indexed by the parameter of asymmetry $\tau \in [0,1]$. In both the strongly and weakly non-Hermitian regimes, where $\tau \in [0,1)$ is fixed or $1-\tau=O(1/n)$, respectively, we derive the asymptotic expansion of the probability $p_{n,n-2l}$ that all but a finite number $2l$ of eigenvalues are real. In particular, we show that the expansion is of the form 
\begin{align*}
\log p_{n, n-2l} = \begin{cases}
a_1 n^2 +a_2 n + a_3 \log n +O(1) &\textup{at strong non-Hermiticity},
\\
b_1 n +b_2 \log n + b_3   +o(1) &\textup{at weak non-Hermiticity},
\end{cases}
\end{align*}
and we determine all coefficients explicitly. Furthermore, in the special case where $l=1$, we derive the full-order expansions. For the proofs, we employ distinct methods for the strongly and weakly non-Hermitian regimes. In the former case, we utilise potential-theoretic techniques to analyse the free energy of elliptic Ginibre matrices conditioned to have \(n-2l\) real eigenvalues, together with the strong Szegő limit theorems. In the latter case, we utilise the skew-orthogonal polynomial formalism and the asymptotic behaviour of the Hermite polynomials.

\end{abstract}

\maketitle 

\pgfmathdeclarefunction{gauss}{2}{%
    \pgfmathparse{1/(#2*sqrt(2*pi))*exp(-((x-#1)^2)/(2*#2^2))}%
}

\section{Introduction and Main results}

In the study of point processes, one of the most natural observables of interest is their counting or number statistics, which addresses fundamental questions about how many particles can be observed within a given domain.  
From a probabilistic perspective, the natural questions that arise can be explored step by step as follows:  
\begin{itemize}
    \item \textit{What is the typical number of particles?}
    This involves determining both the expectation and variance of the observable, which are central to the law of large numbers.  
    \smallskip
    \item \textit{What are the fluctuations around the typical number of particles?}
    This question provides a more refined statistical description and relates to the central limit theorem.  
    \smallskip
    \item \textit{What is the probability of rare events?} 
    Here, one typically investigates the probability of observing an extremely small or large number of particles compared to the typical number. This falls within the scope of large deviation theory.  
\end{itemize}
These types of questions have been extensively studied in the literature and serve as central topics in the field. 
 
In this work, we address a question in this direction for a point process arising from non-Hermitian random matrix theory. Specifically, we focus on asymmetric random matrices with real entries, with the real Ginibre matrix (denoted by GinOE) serving as a prototypical example \cite{BF25}.  
For asymmetric random matrices, an interesting domain of study in the number statistics is the real axis. In other words, we examine the number statistics of real eigenvalues. 
The presence of real eigenvalues with non-zero probability is particularly characteristic of the orthogonal symmetry class, distinguishing it from the unitary and symplectic symmetry classes (see e.g. \cite{Luh18}). This question has been addressed in various models including the GinOE \cite{EKS94, FN07} and its variants including the elliptic GinOE \cite{FN08, BKLL23}, truncated orthogonal ensemble \cite{FIK20, FK18, LMS22}, products of GinOE matrices \cite{AB23, Si17a, Fo14, FI16}, spherical Ginibre matrices \cite{FM12,EKS94}, asymmetric Wishart matrices \cite{BN25}, as well as matrices with non-Gaussian entries \cite{TV15}. 
We also refer the reader to \cite{Ch22,ACCL24,ABES23,FKP24,DLMS24,GLX23,ADM24} and the references therein for the counting statistics of various two-dimensional point processes.
In addition, we mention that the eigenvalue statistics of the elliptic GinOE finds applications in various areas such as ecosystem stability \cite{BFK21,FK16} and complexity of random energy landscapes \cite{Kiv24,Fyo16}. 

\medskip 

We first introduce our model of study, the elliptic GinOE. The GinOE is an \( n \times n \) matrix $G$ with independent and identically distributed standard normal entries. Its elliptic extension is then defined as  
\begin{equation} \label{def of eGinibre}
X := \frac{\sqrt{1+\tau}}{2}(G+G^T)+\frac{\sqrt{1-\tau}}{2}(G-G^T),
\end{equation}
where \( \tau \in [0,1) \) is the asymmetry parameter. This parameter governs the extent to which the matrix deviates from being symmetric, allowing the elliptic GinOE to interpolate between the GinOE (\( \tau = 0 \)) and the Gaussian Orthogonal Ensemble (GOE) in the limit as \( \tau \to 1 \). 

We investigate the asymptotic properties of the real eigenvalues of the elliptic GinOE as \( n \to \infty \). When examining the asymptotic behaviour of the elliptic GinOE, two distinct regimes of interest emerge:  
\begin{itemize}
    \item \textbf{Strong Non-Hermiticity}: This is the case where \( \tau \in [0,1) \) is fixed as \( n \to \infty \).  
    \smallskip 
    \item \textbf{Weak Non-Hermiticity}: This corresponds to the case where \( \tau \to 1 \) at a specific rate. In our study, we consider \( \tau = 1 - \alpha^2/n \), where \( \alpha \in [0, \infty) \) is a fixed parameter. In this regime, one can observe critical transitions between asymmetric and symmetric matrices. This regime was introduced in the papers \cite{FKS97,FKS97a,FKS98}. 
\end{itemize}
For the readers' convenience, we provide a brief summary of the development of the study of real eigenvalues of the elliptic GinOE, both at strong and weak non-Hermiticity. 

The first natural question concerns the law of large numbers. To address this, let \( \mathcal{N} \equiv \mathcal{N}_\tau \) denote the (random) number of real eigenvalues of the elliptic GinOE. In order to establish the law of large numbers for the random variable $\mathcal{N}$ (viewed as a sequence indexed by $N$), one needs to determine the asymptotic behaviour of its mean and variance. It was established in \cite{FN08, BKLL23} (cf. see \cite{EKS94} for an earlier study on the GinOE case) that  
\begin{equation} \label{asymp expected number}
    \mathbb{E} \mathcal{N}  = \begin{cases}
        \mathfrak{b}(\tau) \sqrt{n}+O(1), &\textup{for } \tau \in [0,1) \text{ fixed}, 
        \smallskip 
        \\
        \mathfrak{c}(\alpha) \, n +O(1), &\textup{for } \tau = 1 - \frac{\alpha^2}{n},
    \end{cases}
\end{equation}
where the expectation is taken with respect to the elliptic GinOE, and 
\begin{equation} \label{eq. coeff avg number}
\mathfrak{b}(\tau):= \sqrt{\frac{2}{\pi} \frac{1+\tau}{1-\tau} }, \qquad   \mathfrak{c}(\alpha) := e^{-\frac{\alpha^2}{2}} \Big( I_0\big(\tfrac{\alpha^2}{2}\big) + I_1\big(\tfrac{\alpha^2}{2}\big) \Big).  
\end{equation}
Here, $I_\nu$ is the modified Bessel function of the first kind given by  
\begin{equation} \label{I nu}
I_\nu(z):= \sum_{k=0}^\infty \frac{(z/2)^{2k+\nu}}{k!\,\Gamma(\nu+k+1)},
\end{equation}
see e.g. \cite[Chapter 10]{NIST}. We mention that the coefficients \eqref{asymp expected number} can be realised as normalisation constants in the limiting macroscopic densities \cite{Efe97,BKLL23,BL24}.

It is evident from \eqref{asymp expected number} that a non-trivial proportion of eigenvalues is expected to be real in weak non-Hermiticity, whereas only a subdominant \( O(\sqrt{n}) \) number of eigenvalues are real in strong non-Hermiticity. This observation aligns with the intuition that the number of real eigenvalues increases as the ensemble becomes more symmetric.  
 In addition to the mean asymptotic of $\mathcal {N}$, its variance has been studied in \cite{FN08, BKLL23} as well, which reads
\begin{equation} \label{asymp variance}
\lim_{n \to \infty} \frac{ \textup{Var}\, \mathcal{N} }{ \mathbb{E} \mathcal{N}  } =  \begin{cases}
        2-\sqrt{2}, &\textup{for } \tau \in [0,1) \text{ fixed}, 
        \smallskip 
        \\
        2- 2 \dfrac{ \mathfrak{c}(\sqrt{2}\alpha)}{ \mathfrak{c}(\alpha) }, &\textup{for } \tau = 1 - \frac{\alpha^2}{n}.
    \end{cases}
\end{equation}
Combining \eqref{asymp expected number} and \eqref{asymp variance}, one can immediately observe the law of large numbers for $\mathcal{N}$ in probability. 

The next question concerns the fluctuations of the number of real eigenvalues, which were studied relatively recently in \cite{BMS23, FS23a, Fo24}. In these works, the asymptotic normality of the fluctuations was established. To be more precise, it was shown that
\begin{equation}
\frac{ \mathcal{N}-\mathbb{E} \mathcal{N} }{  \sqrt{\textup{Var}\, \mathcal{N}}  }  \to \textup{N}(0,1), \qquad  \textup{as } n \to \infty , 
\end{equation}
where $\textup{N}(0,1)$ denotes the standard Gaussian distribution.
For earlier results on this topic in the case of the GinOE, we refer the reader to \cite{Si17}.

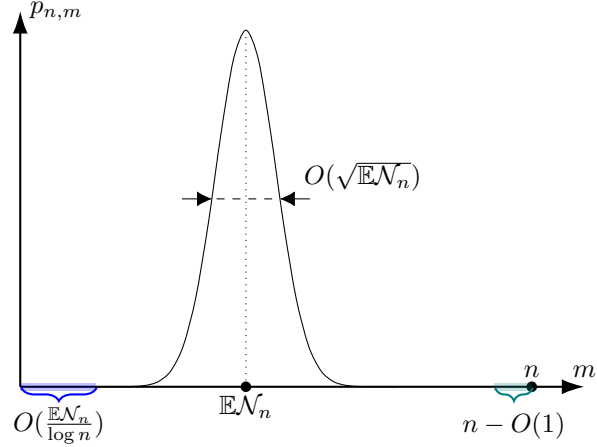
\begin{figure}[t]
    \centering
    \begin{tikzpicture}
        \begin{scope}[scale=1]
            \draw[thick, -{Latex[length=3mm, width=2mm]}] (0,0) -- (0, 5) node[right] {$p_{n,m}$};
            \draw[thick, -{Latex[length=3mm, width=2mm]}] (0,0) -- (7.5, 0) node[right, above] {$m$};
            \begin{axis}[
                anchor=origin,
                x=0.4cm,
                at={(3cm,0)},
                style={samples=51,smooth},
                hide axis
            ]
            \addplot[mark=none] {gauss(0,1)};
            \end{axis}
            \draw[-, -{Latex[length=2mm, width=2mm]}] ({1.9+1.95}, 2.5) -- ({1.5+1.95}, 2.5);
            \draw[-, -{Latex[length=2mm, width=2mm]}] ({0.55+1.6}, 2.5) -- ({0.95+1.6}, 2.5); \node[right] at ({1.7+1.95}, 2.8) {$O(\sqrt{\mathbb{E}\mathcal{N}_n})$};
            \draw[dashed] ({0.9+1.5}, 2.5) -- ({1.5+1.95},2.5);

            \draw[dotted] ({1.2+1.8}, 0) -- ({1.2+1.8}, 4.7);
            
            \fill[black] ({1.2+1.8}, 0) circle (2pt) node[below] {$\mathbb{E}\mathcal{N}_n$};
            \fill[black] (6.8, 0) circle (2pt) node[above] {$n$};

            \fill[nearly transparent, blue] (0,-0.05) rectangle (1,0.05);
            \draw [thick, blue,decorate,decoration={brace,amplitude=5pt,mirror},xshift=0.4pt,yshift=-0.4pt](0,0) -- (1,0) node[black,midway,yshift=-0.5cm] {$O(\frac{\mathbb{E}\mathcal{N}_n}{\log n})$};
            \fill[nearly transparent, teal] (6.3,-0.05) rectangle (6.8,0.05);
            \draw [thick, teal,decorate,decoration={brace,amplitude=5pt,mirror},xshift=0.4pt,yshift=-0.4pt](6.3,0) -- (6.8,0) node[black,midway,yshift=-0.5cm] {$n-O(1)$};
        \end{scope}
    \end{tikzpicture}
    \caption{The plot shows a schematic illustration of the graph $m \mapsto p_{n,m}$, along with markings of the left and right tail regimes.} \label{Fig_illustration of regimes}
\end{figure}

The final question of interest concerns the probability of rare events. In our context, one again needs to distinguish further between two cases:  
\begin{itemize}
    \item \textbf{Left tail event}: This corresponds to the case where there is an exceptionally small number of real eigenvalues compared to the typical number. 
    \smallskip 
    \item \textbf{Right tail event}: This is the opposite to the first one, namely, the situation where there is an exceptionally large number of real eigenvalues. 
\end{itemize}
See Figure~\ref{Fig_illustration of regimes} for an illustration of the different regimes, including the Gaussian fluctuations near the typical value, as well as the right and left tail events. 

To describe such rare events, let \( p_{n,m} \) denote the probability that the elliptic GinOE of size \( n \) has exactly \( m \) real eigenvalues.  
The left tail event was addressed in \cite{BMS23}, where it was shown that for an \( m \) small enough compared to the expected number of real eigenvalues--more precisely, \( m = O(\mathbb{E} \mathcal{N}_n/\log n) \)--the following holds: 
\begin{equation} \label{asymp LDP small ev}
\log p_{n,m}    \begin{cases}
       = \mathfrak{d}(\tau) \sqrt{n}+o(\sqrt{n}), &\textup{for } \tau \in [0,1) \text{ fixed}, 
        \smallskip 
        \\
       \le \mathfrak{f}(\alpha) \, n +o(n), &\textup{for } \tau = 1 - \frac{\alpha^2}{n},
    \end{cases}
\end{equation}
where 
\begin{equation}
\mathfrak{d}(\tau) := - \sqrt{\frac{1+\tau}{1-\tau}} \frac{1}{\sqrt{2\pi}} \zeta(\tfrac{3}{2}), \qquad \mathfrak{f}(\alpha) := -\frac{2}{\pi} \int_0^1 \log\Big(1-e^{-\alpha^2 s^2}\Big) \sqrt{1-s^2} \, ds. 
\end{equation}
Here, $\zeta(s)$ is the Riemann-zeta function. We also mention that the result for the GinOE case was previously obtained in \cite{KPTTZ15}, see also \cite{Fo15a}. Additionally, we note that the inequality in \eqref{asymp LDP small ev} for the weakly non-Hermitian regime is expected to be an equality, see \cite{BMS23}.

In contrast to the left tail case, the right tail event remains less understood. In this work, we aim to contribute to this direction.  
In the extreme case \( m = n \), where all eigenvalues are real, a closed-form expression for the probability exists \cite{Ed97, FN08}, which reads
\begin{equation} \label{eq. p n n closed form}
 \log p_{n,n} = -\frac{1}{4} \log\Big( \frac{2}{1+\tau} \Big) n^2 + \frac{1}{4} \log\Big( \frac{2}{1+\tau} \Big) n.   
\end{equation}
This formula is derived from the ratio between the partition functions of the elliptic GinOE and the GOE.  
The only other known case is for the GinOE, where the spectrum contains only a single complex conjugate pair. To be more precise, it was established in \cite[Eq.(1.3c)]{AK07} that  
\begin{equation} \label{eq. p n n-2 for the GinOE} 
\log p_{n,n-2} \Big|_{ \tau=0 } = -\frac{\log 2}{4} n^2 + \Big( \frac{\log 2}{4} + \log 3 \Big) n -\frac12 \log n  + \log \frac{\sqrt{3}}{8\sqrt{ \pi}} + o(1)
\end{equation}
as $n\to \infty$.

Beyond the mentioned results \eqref{eq. p n n closed form} and  \eqref{eq. p n n-2 for the GinOE}, the expansion of $p_{n,n-2l}$ for given finite $l$ has not been explored in the literature, even for the classical GinOE, $\tau=0$ case. In our main result, we address this question both for strong and weak non-Hermiticity. 
In the sequel, we assume that $n$ is an even integer. While the case of odd $n$ is also manageable, it involves additional computations. Notably, the case of odd $n$ has been explored in \cite{Si07, FM09} for the GinOE.

Our first main result is as follows.  

\begin{theorem}[\textbf{The probability of having a finite number $l$ of complex eigenvalue pairs}] \label{Thm. LDP rate function}
Let $n$ be an even integer and $l$ be a fixed nonnegative integer. Then as $n\to\infty$, we have the following.
    \begin{itemize}
        \item[\textup{(i)}] \textbf{\textup{(Strong non-Hermiticity)}}
        For fixed $\tau \in [0,1)$, we have
        \begin{equation} \label{expansion of p n n2l strong}
          \log p_{n,n-2l} = a_1 n^2+a_2 n +a_3 \log n+O(1),
        \end{equation}
        where 
        \begin{equation}
        a_1=  -\frac{1}{4} \log\Big( \frac{2}{1+\tau} \Big), \qquad a_2=  \frac{1}{4} \log\Big( \frac{2}{1+\tau} \Big)  +  l \log \Big(\frac{3-\tau}{1+\tau}\Big) , \qquad a_3=  - \frac{l^2}{2}. 
        \end{equation}
        
        \item[\textup{(ii)}] \textbf{\textup{(Weak non-Hermiticity)}}
        For fixed $\alpha \in [0, \infty)$ and $\tau = 1 - \alpha^2 / n$, we have
        \begin{equation} \label{expansion of p n n2l weak}
         \log p_{n,n-2l} = b_1 n+b_2 \log n+b_3+O(n^{-\epsilon})
        \end{equation}
        for some sufficiently small $\epsilon>0$, where 
        \begin{equation}
        b_1=  -\frac{\alpha^2}{8} , \qquad b_2= l ,\qquad b_3= l \log \Big( \frac{e^{\frac{\alpha^2}{2}} ( I_0(\tfrac{\alpha^2}{2}) - I_1(\tfrac{\alpha^2}{2}) ) - 1}{2} \Big)- \log (l!)  + \frac{\alpha^2}{8}-\frac{\alpha^4}{32}. 
        \end{equation}
    \end{itemize}
\end{theorem}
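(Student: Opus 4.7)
The plan is to treat the strong and weak regimes by genuinely different techniques, following the dichotomy announced in the abstract.

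\emph{Strong non-Hermiticity.} My first step is to write
\[
p_{n,n-2l}=\frac{Z_{n,l}}{Z_n},
\]
where $Z_{n,l}$ is the integral of the joint elliptic GinOE density over configurations with $n-2l$ real eigenvalues and $l$ complex-conjugate pairs in $\HH$, this density being explicit as a Vandermonde-type product with Gaussian weights depending on $\tau$. Dividing by the all-real sector $Z_{n,0}$ reduces the task, via \eqref{eq. p n n closed form}, to the asymptotics of $\log(Z_{n,l}/Z_{n,0})$. I then interpret the latter as the partition-function ratio of a two-dimensional Coulomb gas in the soft potential $V_\tau$, in which $l$ particles are forced off $\R$ while the remaining $n-2l$ lie on the real axis. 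Because $l$ is finite, the real-axis equilibrium measure is unaffected, so the $n^2$-coefficient $a_1$ matches the all-real case. The $n$-coefficient $a_2$ acquires an additional $l\log\tfrac{3-\tau}{1+\tau}$ from the one-particle free energies of the $l$ complex insertions against the equilibrium background, evaluated at the saddle point of the effective potential inside the droplet. The $\log n$ coefficient $a_3=-l^2/2$ is then extracted from a strong Szegő-type correction that accounts for the determinantal coupling between the insertions and the log-gas fluctuations.

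\emph{Weak non-Hermiticity.} Here I would rely on the Pfaffian point-process structure. The skew-orthogonal polynomials for the elliptic GinOE reduce, up to normalisation, to the Hermite polynomials $H_k$, yielding an explicit pre-kernel on $\R\cup\HH$. Using this, $p_{n,n-2l}$ admits a representation as a finite-dimensional Pfaffian whose entries are Gaussian integrals against $H_n$ and $H_{n-1}$ over $\HH$. The plan is to plug in the Plancherel--Rotach / contour-integral asymptotics of the Hermite polynomials in the scaling $\tau = 1-\alpha^2/n$. A saddle-point analysis of the resulting integrals produces the expansion \eqref{expansion of p n n2l weak}: the $b_1 n$ term and the $\alpha^2/8 - \alpha^4/32$ part of $b_3$ come from the Taylor expansion of the elliptic-to-GOE partition-function ratio; the $l\log n$ term comes from the scaling of each complex-pair integration in the Pfaffian; the $-\log(l!)$ from the combinatorial prefactor in its expansion; and the Bessel-function piece of $b_3$ emerges as the saddle value of a one-point integral involving $e^{-\alpha^2 s^2/2}H_n$-type factors.

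\emph{Main obstacle.} In (i) the hardest step is to upgrade purely potential-theoretic bounds---which naturally give only $n^2$ and $n$ orders---to the $\log n$ correction; this demands a strong Szegő theorem for the real-axis correlation kernel of the conditioned ensemble, together with uniform error control on the determinantal interaction with the $l$ insertions. In (ii), the bookkeeping required to assemble the closed-form constant $b_3$ from uniform Hermite asymptotics, and in particular to recognise the compact Bessel-function combination $\tfrac12(e^{\alpha^2/2}(I_0(\alpha^2/2)-I_1(\alpha^2/2))-1)$ out of the saddle-point integrals, is the main technical burden; once uniform Plancherel--Rotach expansions with effective error bounds are available, both computations reduce to careful residue/saddle evaluations.
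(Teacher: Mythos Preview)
Your overall architecture matches the paper: potential theory plus strong Szeg\H{o} for (i), skew-orthogonal Hermite polynomials plus Plancherel--Rotach asymptotics for (ii). However, there is a genuine conceptual gap in your treatment of part (i), specifically in your identification of where $a_3=-l^2/2$ comes from.

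You write that $a_3$ is ``extracted from a strong Szeg\H{o}-type correction that accounts for the determinantal coupling between the insertions and the log-gas fluctuations.'' This is not what happens, and an attempt to extract a $\log n$ term from the Szeg\H{o} theorem would fail: the Szeg\H{o} expansion for the real log-gas with the $l$ insertions as external test charges produces only an $O(1)$ contribution (in the paper this is the bounded functional $T(\vz)$). The $-\tfrac{l^2}{2}\log n$ arises instead from the integral over the $l$ complex pairs \emph{themselves}. After the mean-field reduction, all $l$ pairs concentrate at the same saddle $z^\star=i\sqrt{2(1-\tau)^2/(3-\tau)}$, and the remaining integral is of the form $\int_{U^l}|\Delta(\vz)|^2 e^{-m\sum_j Q(z_j)}\,d^2\vz$. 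Two effects combine: the Laplace/Gaussian integration around $z^\star$ contributes $n^{-1}$ per particle, i.e.\ $n^{-l}$, while the Vandermonde $|\Delta(\vz)|^2$ among the $l$ collapsing insertions contributes an additional $n^{-l(l-1)/2}$. Together these give $n^{-l-l(l-1)/2}$; half of this is absorbed into the $l$-fold product of the $l=1$ result (each carrying $n^{-1/2}$), and the net $\log n$ coefficient is $-l/2 - l(l-1)/2 = -l^2/2$. Without this Vandermonde-concentration lemma you cannot close the argument.

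For part (ii) your sketch is essentially aligned with the paper, but you do not name the decisive step: in the Pfaffian expansion over pair partitions $\sigma\in P_{2l}$, one must show that the \emph{only} leading contribution is the fully factorised term $\sigma=(2j-1,2j)_{j=1}^l$, which gives exactly $\frac{1}{l!}\big(p_{n,n-2}/p_{n,n}\big)^l$, and that every other $\sigma$ is $O(n^{l-\epsilon'})$. This requires uniform off-diagonal bounds on $\kappa_n(\zeta,\eta)$ across the bulk, edge, and exterior regions, not just a single saddle-point evaluation. The Bessel combination in $b_3$ then comes for free from the $l=1$ formula, not from a separate saddle computation as your sketch suggests.
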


See Figure~\ref{Fig_numerics main} for the numerical verifications of Theorem~\ref{Thm. LDP rate function}.
We stress that $b_3$ is related to $\mathfrak{c}(\alpha)$ in \eqref{eq. coeff avg number} by
\begin{equation}
    b_3= l \log \Big( \frac{\mathfrak{c}(i \alpha)-1}{2} \Big)- \log (l!)  + \frac{\alpha^2}{8}-\frac{\alpha^4}{32}.
\end{equation} 

\begin{figure}[b]
    \begin{subfigure}{0.42\textwidth}
        \begin{center}
            \includegraphics[width=\linewidth]{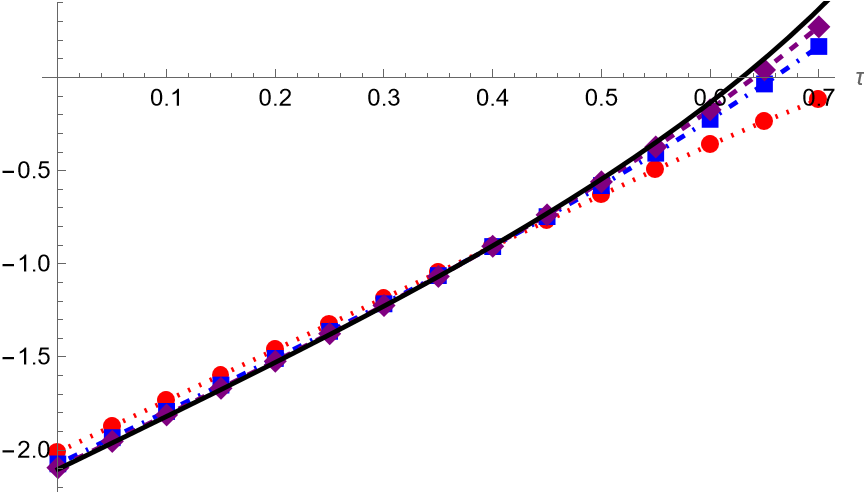}
        \end{center}
        \subcaption{ Strong non-Hermiticity, $l=1$. }
    \end{subfigure}
    \qquad
    \begin{subfigure}{0.42\textwidth}
        \begin{center}
            \includegraphics[width=\linewidth]{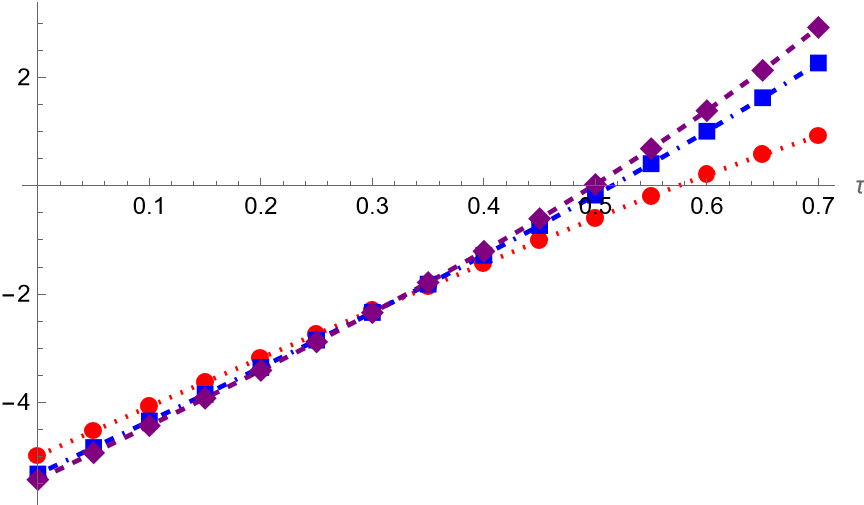}
        \end{center}
        \subcaption{ Strong non-Hermiticity, $l=2$.  }
    \end{subfigure}
    
    \begin{subfigure}{0.42\textwidth}
        \begin{center}
            \includegraphics[width=\linewidth]{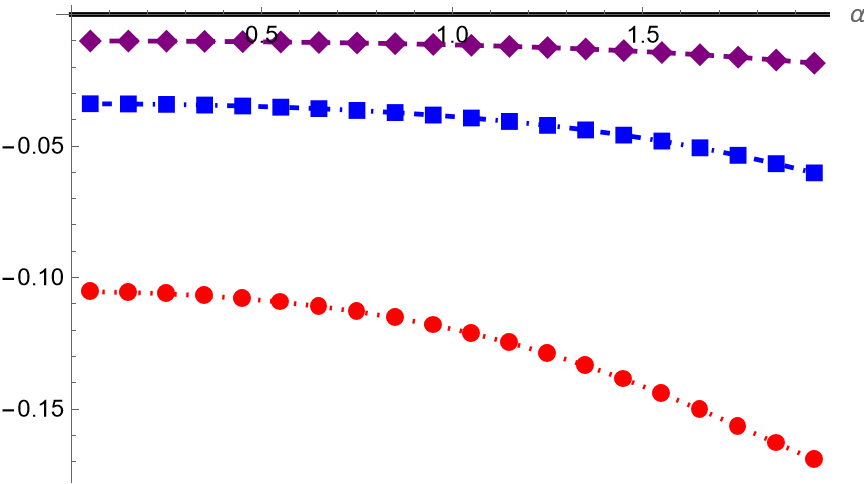}
        \end{center}
        \subcaption{ Weak non-Hermiticity, $l=1$. }
    \end{subfigure}
    \qquad 
    \begin{subfigure}{0.42\textwidth}
        \begin{center}
            \includegraphics[width=\linewidth]{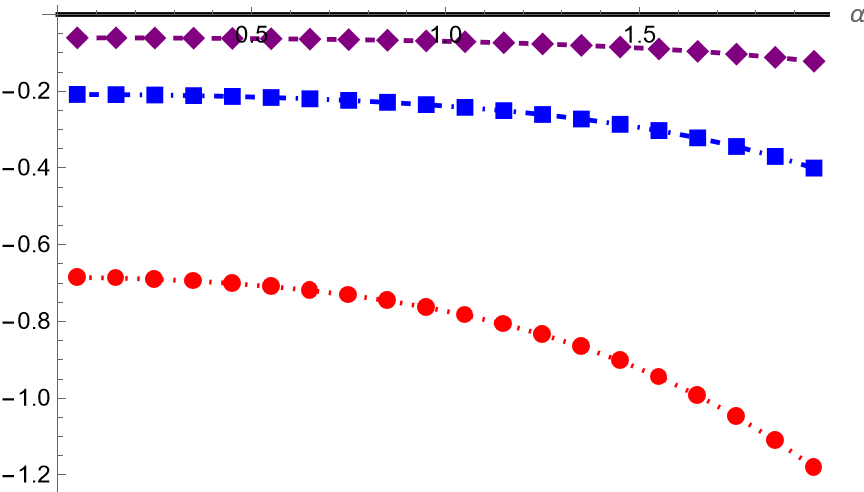}
        \end{center}
        \subcaption{  Weak non-Hermiticity, $l=2$.  }
    \end{subfigure}
    \caption{The plots (A) and (B) illustrate $\tau \mapsto \log p_{n,n-2l} - (a_1 n^2 + a_2 n + a_3 \log n)$ for $n = 10$ (red, dotted), $30$ (blue, dot-dashed), and $100$ (purple, dashed). From these plots, it can be observed that the values remain bounded as $n$ increases. Additionally, plot (A) compares this with $\tau \mapsto \log \big( \frac{(3-\tau)^{1/2} (1+\tau)^{3/2}}{8\sqrt{\pi} (1-\tau)^{3/2}} \big)$ (black, solid), as given in \eqref{eq. LDP sh l=1 v2}.
 The plots (C) and (D) display $\alpha \mapsto \log p_{n,n-2l} - (b_1 n + b_2 \log n + b_3)$ for $n = 10$ (red, dotted), $30$ (blue, dot-dashed), and $100$ (purple, dashed), along with $\alpha \mapsto 0$ (black, solid). These plots demonstrate that the values converge to $0$ as $n$ increases. We used \eqref{eq. pnm zonal formula} for the numerical evaluation at each data point. } \label{Fig_numerics main}
\end{figure}
 
An equivalent way of writing \eqref{expansion of p n n2l strong} is by taking the ratio with the closed formula \eqref{eq. p n n closed form} as follows 
          \begin{equation} \label{eq. LDP sH fixed l}
            \log \Big(\frac{p_{n,n-2l}}{p_{n,n}}\Big) = l \log \Big(\frac{3-\tau}{1+\tau}\Big)  n - \frac{l^2}{2} \log n + O(1) . 
        \end{equation} 
On the other hand, by comparing with the case $l=1$, one can also rewrite \eqref{eq. LDP sH fixed l} as  
        \begin{equation}
        \frac{p_{n,n-2l}}{p_{n,n}} = \frac{1}{n^{l(l-1)/2}} \Big( \frac{p_{n,n-2}}{p_{n,n}} \Big)^l O(1).
        \end{equation}
Similarly, for weak non-Hermiticity, the expansion \eqref{expansion of p n n2l weak} can be rewritten as 
         \begin{equation} \label{eq. LDP wH fixed l}
            \log \Big(\frac{p_{n,n-2l} }{p_{n,n}}\Big) = l \log n + l \log \Big( \frac{e^{\frac{\alpha^2}{2}} ( I_0(\tfrac{\alpha^2}{2}) - I_1(\tfrac{\alpha^2}{2}) ) - 1}{2} \Big)  - \log (l!) + O(n^{-\epsilon}) 
        \end{equation}
        and
        \begin{equation}
        \frac{p_{n,n-2l}}{p_{n,n}} = \frac{1}{l!} \Big( \frac{p_{n,n-2}}{p_{n,n}} \Big)^l (1 + O(n^{-\epsilon})).
    \end{equation}

Observe that the leading order in the expansions \eqref{expansion of p n n2l strong} and \eqref{expansion of p n n2l weak} does not depend on $l$. Such behaviour can be intuitively understood from an electrostatic perspective. More precisely, $\log p_{n,n-2l}$ can be interpreted as the logarithm of the ratio between the partition function of the elliptic GinOE and that of the elliptic GinOE conditioned on having $n-2l$ real eigenvalues. Under this conditional event, the leading-order macroscopic behaviour remains unaffected by the finite parameter $l$. However, this phenomenon no longer holds when $l = O(n)$, as the leading order does depend on the magnitude of $l$. We refer to \cite{MPTW16} for further discussion.

Beyond the leading-order coefficients, the subleading-order coefficients can also be understood from an electrostatic perspective. For example, consider two random $n$-point configurations on the complex plane associated with the Gibbs measure defined by the electrostatic Hamiltonian. One configuration is conditioned to have all $n$ points on the real line, while the other is conditioned to have $n - 2l$ points on the real line and $l$ complex conjugate pairs.  
Then, the term $l \log(\frac{3 - \tau}{1 + \tau})$ in \eqref{eq. LDP sH fixed l} arises from the typical electrostatic energy difference between these two $n$-point configurations.  
See also Lemma~\ref{lem. existence of global minimum for Q_n's}.

\medskip 

Our next result focuses on the specific case where \( l = 1 \). In this case, we are able to derive the full order asymptotic expansion.
For this purpose, we first define 
   \begin{align}
        \begin{split} \label{eq. coeff mathcal A}
            \mathcal{A}_r(\tau) &= \frac{ (1+\tau)^{\frac{3}{2}} }{ 4 \pi^\frac{3}{2} (1-\tau) } \sum_{j=0}^r \Big( \frac{3-\tau}{4} \Big)^{r+\frac{1}{2}} \Big( \frac{2}{1-\tau} \Big)^{j+\frac{1}{2}} \frac{\Gamma(j+\frac{3}{2}) }{ \Gamma(j+1) }
            \\
            & \qquad \times \bigg[ \Gamma(r+\tfrac{1}{2}) + \sum_{q=1}^{r-j} \sum_{k=1}^q \Big( \frac{4}{3-\tau} \Big)^q (-1)^k \frac{\Gamma(r+k+\tfrac{1}{2})}{ \Gamma(k+1) } \sum_{\substack{u_1 + \cdots + u_k = q, \\ u_1, \ldots, u_k \geq 1}} \prod_{v=1}^k \frac{1}{u_v + 1} \bigg].
        \end{split}
        \end{align}
This will be used in the expansion at strong non-Hermiticity. 
Notice that by definition, the first few terms of \eqref{eq. coeff mathcal A} are given by  
\begin{align}
    \begin{split}
        &\mathcal{A}_0(\tau) = \frac{ (3-\tau)^{\frac{1}{2}} (1+\tau)^{\frac{3}{2}} }{8\sqrt{\pi} (1-\tau)^{\frac{3}{2}}},
        \qquad \mathcal{A}_1(\tau) = \frac{ (3-\tau)^{\frac{1}{2}} (1+\tau)^{\frac{3}{2}} (9 - 4\tau + \tau^2) }{ 64 \sqrt{\pi} (1-\tau)^{\frac{5}{2}} },
        \\
        & \mathcal{A}_2(\tau) = \frac{ (3-\tau)^{\frac{1}{2}} (1 + \tau )^{\frac{3}{2}}
        \left(3 \tau ^4-18 \tau ^3+44 \tau ^2-82 \tau
        +143\right)}{512 \sqrt{\pi } (1 - \tau)^{\frac{7}{2}}}.
    \end{split}
    \end{align}
To describe the expansion for weak non-Hermiticity, we define 
  \begin{equation} \label{eq. coeff mathcal B}
        \mathcal{B}_k(\alpha) = \sum_{j=0}^k \binom{\tfrac{1}{2}}{j} \Big( - \frac{\alpha^2}{2} \Big)^j \mathfrak{s}_{k-j}(\alpha),
    \end{equation}
    where  
    \begin{align}
    \begin{split}
        \mathfrak{s}_{k}(\alpha) &= \frac{\sqrt{\pi} (2k - (2k-1) \alpha^2) \Gamma(k-\tfrac{1}{2}) }{ 2^{k+1} \pi \alpha^2 \Gamma(k+1) } + \sum_{j=0}^k \mathfrak{u}_{k,j} \frac{ \Gamma(j- \tfrac{1}{2}) }{ \sqrt{\pi} \alpha^2 \Gamma(j) } \gausF(j-\tfrac{1}{2}, j, \alpha^2)
        \\
        & \quad - \sum_{q=1}^k \sum_{j=0}^{k-q} \frac{\mathfrak{u}_{k-q,j}}{2^q} \frac{ \Gamma(j+\tfrac{1}{2}) \Gamma( q- \tfrac{1}{2} ) }{ \pi \alpha^2 \Gamma(j+q) } \gausF(j+\tfrac{1}{2}, j+q, \alpha^2).
    \end{split}
    \end{align}
    Here, $\gausF$ is Kummer confluent hypergeometric function \cite[Chapter 16]{NIST}
    \begin{equation} \label{eq. def of 1F1}
        \gausF(a,b,z)= \sum_{k=0}^\infty \frac{\Gamma(a+k) \Gamma(b)}{\Gamma(a) \Gamma(b+k)} \frac{z^k}{k!},
    \end{equation}
    and $\mathfrak{u}_{j,s}$ are defined by the coefficients of the polynomial 
    \begin{equation}
        \sum_{s=0}^{j} \mathfrak{u}_{j,s} t^s = \sum_{k=1}^j \frac{\alpha^{2k} }{2^{j+k} k!} \sum_{\substack{j_1 + \cdots + j_k = j, \\ j_1, \ldots, j_k \geq 1}}  \prod_{s=1}^k \frac{1 - (1 - 2t)^{j_s + 1}}{j_s + 1}.
    \end{equation}
  The first few terms are given by 
  \begin{align}
  \begin{split}
      &\mathcal{B}_0(\alpha)= e^{\frac{\alpha^2}{2}} \Big( I_0(\tfrac{\alpha^2}{2}) - I_1(\tfrac{\alpha^2}{2}) \Big) - 1,
        \qquad \mathcal{B}_1(\alpha) = \frac{1}{2}\Big( e^{\frac{\alpha^2}{2}} I_0(\tfrac{\alpha^2}{2}) - \alpha^2 - 1 \Big),
        \\
        & \mathcal{B}_2(\alpha) = \frac{\alpha^2}{24} e^{\frac{\alpha^2}{2}} \Big( \alpha^2 I_0(\tfrac{\alpha^2}{2}) + (\alpha^2-1) I_1(\tfrac{\alpha^2}{2})  \Big),
  \end{split}
    \end{align}
    where we expanded $\gausF$ in terms of the modified Bessel functions of the first kind using \eqref{I nu} and \eqref{eq. def of 1F1}, e.g. \cite[Section 13.6]{NIST}.

\begin{theorem}[\textbf{The probability of having one complex conjugate pair of eigenvalues, i.e. $l=1$}] \label{Thm. LDP rate function l=1}
Let $M$ be a fixed positive integer. Then as $n \to \infty$, we have the following. 
\begin{itemize}
    \item[\textup{(i)}]  \textbf{\textup{(Strong non-Hermiticity)}}
        For fixed $\tau \in [0,1)$, we have 
      \begin{equation}   \label{eq. LDP sH l=1}
            \frac{p_{n,n-2}}{p_{n,n}} =  \frac{1}{\sqrt{n}} \Big(\frac{3-\tau}{1+\tau}\Big)^n \Big(\sum_{k=0}^{M-1} \mathcal{A}_k(\tau) n^{-k} + O(n^{-M}) \Big),
        \end{equation}
        where $\mathcal{A}_k$ is given by \eqref{eq. coeff mathcal A}. In particular, we have 
      \begin{align}
       \begin{split}   \label{eq. LDP sh l=1 v2}
        \log p_{n,n-2}& =  -\frac{1}{4} \log\Big( \frac{2}{1+\tau} \Big) n^2 +\bigg[ \frac{1}{4} \log\Big( \frac{2}{1+\tau}  \Big)+ \log  \Big(\frac{3-\tau}{1+\tau}\Big)   \bigg] n
        \\
        &\quad -\frac12 \log n + \log \Big(  \frac{ (3-\tau)^{\frac{1}{2}} (1+\tau)^{\frac{3}{2}} }{8\sqrt{\pi} (1-\tau)^{\frac{3}{2}}} \Big)+O( n^{-1} ).
        \end{split}
        \end{align}
    \item[\textup{(ii)}] \textbf{\textup{(Weak non-Hermiticity)}} For fixed $\alpha \in [0, \infty)$ and $\tau = 1 - \alpha^2 / n$, we have 
    \begin{equation}  \label{eq. LDP wH l=1}
        \frac{p_{n,n-2}}{p_{n,n}} =  \frac{n}{2} \Big( \sum_{k=0}^{M-1} \mathcal{B}_k(\alpha) n^{-k}  + O(n^{-M}) \Big),
    \end{equation}
    where $\mathcal{B}_k$ is given by \eqref{eq. coeff mathcal B}. In particular, we have 
    \begin{align}
    \begin{split} \label{eq. LDP wh l=1 v2}
     \log p_{n,n-2} &=  -\frac{\alpha^2}{8} n+\log n + \log \Big( \frac{e^{\frac{\alpha^2}{2}} ( I_0(\tfrac{\alpha^2}{2}) - I_1(\tfrac{\alpha^2}{2}) ) - 1}{2} \Big)  + \frac{\alpha^2}{8}-\frac{\alpha^4}{32}
     \\
     &\quad +  \bigg[ \frac12\frac{e^{\frac{\alpha^2}{2}} I_0(\tfrac{\alpha^2}{2}) - \alpha^2 - 1 }{e^{\frac{\alpha^2}{2}} ( I_0(\tfrac{\alpha^2}{2}) - I_1(\tfrac{\alpha^2}{2}) ) - 1} + \frac{\alpha^4}{32}-\frac{\alpha^6}{96} \bigg] n^{-1}  +O(n^{-2}).
    \end{split}
    \end{align}
\end{itemize} 
\end{theorem}

Notice in particular that \eqref{eq. LDP sh l=1 v2} recovers the previous finding \eqref{eq. p n n-2 for the GinOE} for the extremal case $\tau=0.$ 
We mention that Theorem~\ref{Thm. LDP rate function l=1} is one of the very few cases where the full order expansion is tractable in an explicit way.

We emphasise that in Theorems~\ref{Thm. LDP rate function} and~\ref{Thm. LDP rate function l=1}, the strong and weak non-Hermiticity regimes should be treated separately, as no uniform asymptotic behaviour holds over the entire range \( \tau \in [0,1] \). Nonetheless, the leading-order behaviours can be formally connected—as \( \tau \to 1 \) from the strong non-Hermiticity side and \( \alpha \to \infty \) from the weak non-Hermiticity side—in a manner analogous to the discussion in \cite[Section 1.1]{BKLL23} concerning the mean and variance of the number of real eigenvalues.

To be more precise, assume $ \alpha = n^\epsilon $ for sufficiently small $ \epsilon > 0 $. Then, one can verify that the right-hand side of \eqref{eq. LDP sH l=1} exhibits the asymptotic behaviour
\begin{equation*} 
    \frac{n}{2\sqrt{\pi} \alpha^3} e^{\alpha^2} \Big(1 + O(n^{-4\epsilon}) \Big),
\end{equation*}
On the other hand, using the asymptotic expansions of the modified Bessel functions of the first kind,
\begin{equation*}
    I_0(x) \sim \frac{e^x}{\sqrt{2\pi x}} \Big(1 + \frac{1}{8x} + \frac{9}{128 x^2} \Big), \qquad I_1(x) \sim \frac{e^x}{\sqrt{2\pi x}}\Big(1 - \frac{3}{8x} - \frac{15}{128 x^2} \Big), \qquad \text{as} \quad x \to \infty,
\end{equation*}
it follows that the right-hand side of \eqref{eq. LDP wH l=1} satisfies the asymptotic behaviour
\begin{equation*}
 \frac{n}{2\sqrt{\pi} \alpha^3} e^{\alpha^2} \Big(1 + 
 O(n^{-2\epsilon}) \Big). 
\end{equation*}
These expressions show that the two asymptotic behaviours agree at least at the leading order.

\subsection*{Organisation of the paper}
The rest of this paper is organised as follows. In Section~\ref{Section_integrable structure}, we provide some preliminaries for our analysis, including the Pfaffian integration formula for \(p_{n,m}\). Section~\ref{Section_one complex conjudate} focuses on the special case when there is only one complex conjugate pair, and we prove Theorem~\ref{Thm. LDP rate function l=1}. In Section~\ref{Sec. l finite sH}, we establish Theorem~\ref{Thm. LDP rate function} (i) for the strongly non-Hermitian regime using a mean-field approximation and potential-theoretic approach. In Section~\ref{Sec. l finite wH}, we prove Theorem~\ref{Thm. LDP rate function} (ii) for the weakly non-Hermitian regime through asymptotic analysis involving skew-orthogonal polynomials.

\section{Integrable structure and Pfaffian formula of the elliptic GinOE} \label{Section_integrable structure}

In this section, we recall the Pfaffian formula for the probability $p_{n,m}$.
The formula was derived in \cite{AK07} for $\tau=0$ using \emph{the Pfaffian integration Theorem}, see also \cite{BK07}. 
With minor modifications, the derivation of the formula for general $\tau \in [0,1)$ is similar to that for $\tau = 0$.
We give an idea of the derivation in this section, and refer to \cite[Sections 3 and 6]{AK07} for details.

Let $\vmu = (\mu_1, \ldots, \mu_m) \in \R^m$ and $\vzeta = (\zeta_1, \ldots, \zeta_l) \in \HH^l$, where $\HH$ is the upper-half plane.  
Then, the partial joint probability density function of the elliptic GinOE, with a  number $m$ of 
real eigenvalues $\vmu$ and $l$ of complex eigenvalue pairs $\vzeta$ and $\overline{\vzeta}$, can be found in \cite{FN08}.
For the latter purpose, it is often convenient to rescale eigenvalues $$\vlambda = \sqrt{\frac{1+\tau}{n}}\vmu, \qquad \vz = \sqrt{\frac{1+\tau}{n}}\vzeta.$$ 
To describe the joint probability distribution, it is convenient to introduce the following three Hamiltonian functions that capture pairwise interactions among the $n$ eigenvalues and the influence of an external potential: 
\begin{align}
    \mathsf{H}_{11}(\vlambda) &= \frac{1}{n}\sum_{1\leq j < k \leq m} \log \frac{1}{|\lambda_j - \lambda_k|} + \sum_{j=1}^m \frac{\lambda_j^2}{2(1+\tau)},  \label{def of Ham H11}
    \\
    \mathsf{H}_{12}(\vlambda, \vz) &= \frac{1}{n} \sum_{j=1}^m \sum_{k=1}^l \log \frac{1}{|\lambda_j - z_k|^2},  \label{def of Ham H12}
    \\
    \mathsf{H}_{22}(\vz) &= \frac{1}{n}\sum_{1\leq j < k \leq l} \log \frac{1}{|z_j - z_k|^2 |z_j-\Bar{z}_k|^2} + \sum_{j=1}^l Q_{\tau,n}(z_j),  \label{def of Ham H22}
\end{align}
where
\begin{align} \label{eq. Q_{tau,n}}
\begin{split}
    Q_{\tau,n}(z) &= - \frac{1}{n} \log\Big[ |z-\Bar{z}| \erfc\Big( \frac{\sqrt{n} (z - \overline{z})}{i\sqrt{2{(1-\tau^2)}}} \Big) \Big] + \frac{z^2 + \overline{z}^2}{2(1+\tau)}.
\end{split}
\end{align}
Then the joint distribution is given by 
\begin{align} \label{scaled p.d.f.}
\begin{split}
    \mathcal{P}_{m,l}(\vlambda; \vz) &= \frac{1}{Z_{m,l}} e^{-n (\mathsf{H}_{11}(\vlambda) + \mathsf{H}_{12}(\vlambda, \vz) + \mathsf{H}_{22}(\vz))},
\end{split}
\end{align}
where 
\begin{equation} \label{eq. def of widetilde Z_ml}
Z_{m,l} = 2^{\frac{n(n+1)}{4} - l} n^{-\frac{n(n+1)}{4}} m! \, l! \,(1+\tau)^{\frac{n}{2}} \prod_{j=1}^{n} \Gamma(j/2).
\end{equation}
By definition, the probability $p_{n,m}$ to have $m$ real eigenvalues and $l$ complex conjugate pairs is given by
\begin{align}
    p_{n,m} = \frac{1}{Z_{m,l}} \int_{\HH^l} \int_{\R^m} e^{-n (\mathsf{H}_{11}(\vlambda) + \mathsf{H}_{12}(\vlambda, \vz) + \mathsf{H}_{22}(\vz))} d\vlambda \, d^2\vz. \label{eq. p_nm integral vz}
\end{align}

The formalism of skew-orthogonal polynomials is crucial for the derivation of the Pfaffian formula, see \cite[Section 3]{AK07}.
For this, recall that the GOE skew product $\langle\cdot,\cdot\rangle$ is defined by 
\begin{equation}
    \langle f, g \rangle = \frac{1}{2} \int_\R d\xi \, e^{-\xi^2/2} \int_\R d\eta \, e^{-\eta^2/2} \sgn(\eta-\xi) f(\xi) g(\eta).
\end{equation} 
We define polynomials 
\begin{equation} \label{eq. skew-ortho polys}
q_{2j}(\xi) = \frac{1 }{2^{2j}} H_{2j}(\xi), \qquad q_{2j+1}(\xi) = \frac{1}{2^{2j}} ( H_{2j}(\xi) - 4j H_{2j-1}(\xi)),
\end{equation}
where 
$$H_k(\xi) = (-1)^k e^{\xi^2} \frac{d^k}{d\xi^k} e^{-\xi^2}$$
is the Hermite polynomial. 
They form skew-orthogonal polynomials with respect to the GOE skew product, i.e.  
\begin{align}
\langle q_{2j}, q_{2j} \rangle = \langle q_{2j+1}, q_{2j+1} \rangle = 0,\qquad     \langle q_{2j}, q_{2k+1} \rangle = - \langle q_{2j+1}, q_{2k} \rangle = h_j \delta_{jk}, \qquad   h_j = \frac{\sqrt{\pi} (2j)!}{2^{2j}}.
\end{align} 
These are building blocks to define prekernels
\begin{align} \label{eq. def of prekernel kappa}
    \kappa_{n}(\zeta,\eta) = \frac{1}{2} e^{-(\zeta^2+\eta^2)/2} \sum_{j=0}^{n/2-1} \frac{q_{2j+1}(\zeta) q_{2j}(\eta) - q_{2j}(\zeta) q_{2j+1}(\eta)}{h_j}.
\end{align}

A key idea of the derivation for the Pfaffian formula is identifying the integrand in \eqref{eq. p_nm integral vz} depending on $\vlambda$ as a product of characteristic polynomials of an $m\times m$ GOE.
The average of a product of GOE characteristic polynomials can be expressed in terms of Pfaffian of the prekernels in \eqref{eq. def of prekernel kappa} \cite{BS06}. 
Then we have
\begin{equation} \label{eq. p(n,m) pfaffian integral representation}
    p_{n,m} = \frac{p_{n,n}}{l!}\Big(\frac{2}{i}\Big)^{l} \int_{\HH^l} d^2\vzeta \, 
    \Pf\begin{bmatrix}
        \kappa_n(\zeta_j, \zeta_k) & \kappa_n(\zeta_j, \overline{\zeta}_k) 
        \smallskip 
        \\
        \kappa_n(\overline{\zeta}_j, \zeta_k) & \kappa_n(\overline{\zeta}_j, \overline{\zeta}_k)
    \end{bmatrix}_{j,k=1}^l
    \prod_{j=1}^{l} \erfc\Big( \frac{\zeta_j - \overline{\zeta}_j}{i\sqrt{2{(1-\tau)}}} \Big).
\end{equation}
The integral representation \eqref{eq. p(n,m) pfaffian integral representation} can be further evaluated using \emph{Pfaffian integration theorem} \cite{AK07, BK07}. To illustrate the result, we recall the generalised Laguerre polynomials and the zonal polynomials.
For $a\in\R$, the generalised Laguerre polynomials $L_k^{a}(\xi)$ are defined by
\begin{equation}
    L_k^{a}(\xi) = \frac{1}{k!} \xi^{-a} e^{\xi} \frac{d^k}{d\xi^k}(\xi^{k+a} e^{-\xi}).
\end{equation}
The zonal polynomials are defined for nonnegative integers $k$, and can be written in terms of a sum over all partitions of $k$ \cite{Ma15}. Let us denote by $\sigma = (1^{\sigma_1}, \ldots, k^{\sigma_k})$ a partition of $k$. That is $|\sigma| = k$, where $|\sigma| = \sum_{j=1}^k j \sigma_j$. Then the zonal polynomial associated to $(1^k)$ is defined by 
\begin{equation}
    Z_{(1^k)}(\xi_1, \ldots, \xi_k) = (-1)^k k!\sum_{|\sigma| = k} \prod_{j=1}^k \frac{1}{\sigma_j !} \Big( -\frac{\xi_j}{j} \Big)^{\sigma_j}.
\end{equation}
In particular, when $k=1$, we have $ Z_{(1^1)}(\xi) = \xi$.
We define an $n \times n$ matrix $\rhohat_n$ whose $(j,k)$ entries $ [\rhohat_n]_{j,k}$ are given as 
\begin{align}
    [\rhohat_n]_{j,k}  = \int_0^\infty y^{2(k-j)-1} e^{y^2} \erfc\Big(y\sqrt{\frac{2}{1-\tau}}\Big) \bigg[ (2j-1) L_{2j-1}^{2(k-j)-1}(-2y^2) + 2y^2 L_{2j-3}^{2(k-j)+1}(-2y^2) \bigg] \, dy.
\end{align} 
Then, it follows from the computations in \cite[Appendices 3 and 4]{AK07} that the integral \eqref{eq. p(n,m) pfaffian integral representation} is evaluated as follows.

\begin{lemma}
Let $n=m+2l$. Then we have 
\begin{equation} \label{eq. pnm zonal formula}
    p_{n,m} = \frac{p_{n,n}}{l!} Z_{(1^l)}\Big( \Tr(\rhohat_n^1), \ldots, \Tr(\rhohat_n^l)  \Big).
\end{equation}
In particular, for $n=m+2$, i.e. $l=1$, the formula further reduces to
\begin{equation} \label{eq. p(n,n-2) in terms of p(n,n)}
    p_{n,n-2}  = p_{n,n} \int_0^\infty dy \,2 y \, e^{y^2} \erfc\Big( y \sqrt{\frac{2}{1-\tau}}\Big) L_{n-2}^2(-2y^2).
\end{equation}
\end{lemma}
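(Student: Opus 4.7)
The plan is to adapt the argument of Akemann--Kanzieper \cite[Sections~3, 6 and Appendices~3--4]{AK07}, which derives \eqref{eq. pnm zonal formula} in the GinOE case $\tau=0$, to the present elliptic setting. The key observation is that the dependence of the integrand in \eqref{eq. p(n,m) pfaffian integral representation} on the ellipticity parameter enters only through the argument of the $\erfc$ factor; the prekernel $\kappa_n$ and the skew-orthogonal polynomial structure \eqref{eq. skew-ortho polys} are themselves $\tau$-independent. Consequently the structural part of the proof at $\tau=0$ carries over verbatim, and only the one-dimensional integrals determining the entries of $\rhohat_n$ need to be recomputed with the $\tau$-dependent $\erfc$ factor.

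First I would invoke the \emph{Pfaffian integration theorem} of \cite{AK07, BK07}. In its general form, this asserts that when a Pfaffian kernel is built from a sequence of skew-orthogonal polynomials and integrated against an external weight, the $l$-fold integral equals a polynomial in the traces $\Tr(\rhohat_n^j)$, $j=1,\ldots,l$, whose precise Newton-type combination is exactly the one encoded by the zonal polynomial $Z_{(1^l)}$. Applied to \eqref{eq. p(n,m) pfaffian integral representation} with the four-block $\kappa_n$-Pfaffian and the weight $\erfc\bigl(\tfrac{\zeta-\overline{\zeta}}{i\sqrt{2(1-\tau)}}\bigr)$, this immediately yields the right-hand side of \eqref{eq. pnm zonal formula}, provided $\rhohat_n$ is identified as the integral operator on the finite-dimensional span of $\{q_{2j},q_{2j+1}\}_{j=0}^{n/2-1}$ obtained from these overlap integrals.

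The next step is to compute the entries $[\rhohat_n]_{j,k}$ explicitly. Writing $\zeta = x+iy$ with $y>0$, the $x$-integration is Gaussian and can be performed using the classical identity
\begin{equation*}
\int_{-\infty}^\infty H_m(x+iy)\,H_n(x-iy)\,e^{-x^2}\,dx = \sqrt{\pi}\,2^m\,n!\,(iy)^{m-n}\,L_n^{m-n}(-2y^2), \qquad m\ge n,
\end{equation*}
together with its $m\le n$ analogue. Substituting the definitions \eqref{eq. skew-ortho polys} of $q_{2j}$ and $q_{2j+1}$ and simplifying the resulting combinations of generalised Laguerre polynomials using the contiguous relations $L_k^a(\xi) = L_k^{a+1}(\xi) - L_{k-1}^{a+1}(\xi)$ and $\tfrac{d}{d\xi}L_k^a(\xi) = -L_{k-1}^{a+1}(\xi)$ yields the closed expression for $[\rhohat_n]_{j,k}$ stated just above the lemma. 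For the special case $l=1$, the $2\times 2$ Pfaffian in \eqref{eq. p(n,m) pfaffian integral representation} is simply the off-diagonal entry $\kappa_n(\zeta_1,\overline{\zeta}_1)$, and the reduction collapses to the single trace $\Tr(\rhohat_n)$; after the $x$-integration the sum $\sum_{j=0}^{n/2-1}$ over skew-orthogonal pairs can then be telescoped via
\begin{equation*}
\sum_{j=0}^{N-1} L_j^\alpha(\xi) = L_{N-1}^{\alpha+1}(\xi)
\end{equation*}
into the single Laguerre polynomial $L_{n-2}^2(-2y^2)$, producing \eqref{eq. p(n,n-2) in terms of p(n,n)}.

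The principal technical obstacle lies in the bookkeeping required for the matrix elements $[\rhohat_n]_{j,k}$: tracking the normalisation constants $h_j$, the sign patterns arising from the antisymmetric block structure of the Pfaffian, and the combinatorial rearrangement of the Hermite--Laguerre conversion after the complex shift $x\mapsto x\pm iy$. This is entirely parallel to the derivation in \cite[Appendices~3--4]{AK07}, with the only substantive change being the replacement of the $\tau=0$ form of the $\erfc$ weight by its elliptic analogue, which leaves the algebraic manipulations untouched and only modifies the single-variable $y$-integrals that define $[\rhohat_n]_{j,k}$.
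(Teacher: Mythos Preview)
Your proposal is correct and follows essentially the same route as the paper: both invoke the Pfaffian integration theorem of \cite{AK07,BK07} applied to the representation \eqref{eq. p(n,m) pfaffian integral representation}, and both observe that the only change relative to the $\tau=0$ case is the argument of the $\erfc$ factor, leaving the Hermite--Laguerre algebra untouched. The paper itself provides no details beyond a citation to \cite[Appendices~3--4]{AK07}; your outline of the Hermite integral identity and the Laguerre telescoping for the $l=1$ case is exactly the content of those appendices, so you have in fact written out more of the argument than the paper does.
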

This lemma plays an important role in the asymptotic analysis in the sequel. 
In particular, \eqref{eq. p(n,n-2) in terms of p(n,n)} can be applied to show Theorem~\ref{Thm. LDP rate function l=1}, which will be the focus of the next section.

\section{Special case: one complex conjugate pair} \label{Section_one complex conjudate}

In this section, we consider the probability $p_{n,n-2}$ that there exists exactly one pair of complex eigenvalues, and provide the proof of Theorem~\ref{Thm. LDP rate function l=1}. 

The integral representation \eqref{eq. p(n,n-2) in terms of p(n,n)} is closely related to Gauss' hypergeometric function defined by
\begin{equation}
    \gaussF(a, b; c; z) = \frac{\Gamma(c)}{\Gamma(a) \Gamma(b) } \sum_{k=0}^\infty \frac{\Gamma(a+k) \Gamma(b+k)}{\Gamma(c+k) k!} z^k
\end{equation}
in $|z| < 1$, and by analytic continuation elsewhere.
We begin by changing \eqref{eq. p(n,n-2) in terms of p(n,n)} into a more suitable expression for asymptotic analysis.

\begin{lemma}\label{lem. Integral rep of p n n-2 without special func}
    We have
    \begin{equation} \label{eq. integral rep of p(n,n-2)}
        \frac{p_{n,n-2}}{p_{n,n}} = \frac{ (1+\tau)^{\frac{3}{2}} }{ 4\sqrt{2} \pi (1-\tau) } \int_0^1 \frac{s^{-\frac{1}{2}} (1-s)^{-\frac{3}{2}}}{ 1 - \tfrac{1-\tau}{2} s  } \bigg[ \Big( 1 + \tfrac{2(1-\tau)}{1+\tau} (1-s) \Big)^n - n \tfrac{2(1-\tau)}{1+\tau} (1-s) - 1 \bigg] \, ds.
    \end{equation}
\end{lemma}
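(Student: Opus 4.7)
The plan is to start from the representation \eqref{eq. p(n,n-2) in terms of p(n,n)} and employ the integral representation
\begin{equation*}
\erfc(x) = \frac{2}{\pi} e^{-x^2} \int_0^\infty \frac{e^{-x^2 t^2}}{1 + t^2}\, dt, \qquad x > 0.
\end{equation*}
Setting $x = y\sqrt{2/(1-\tau)}$ and interchanging the order of integration by Fubini (the integrand is nonnegative because $L_{n-2}^2(-2y^2)$ has nonnegative coefficients), the inner $y$-integral becomes a Gaussian-weighted Laguerre integral. After the substitution $u = y^2$ it reduces to the Laplace transform
\begin{equation*}
\int_0^\infty e^{-\kappa u} L_{n-2}^2(-2u)\, du = \frac{(\kappa+2)^n - \kappa^n - 2n \kappa^{n-1}}{4\kappa^{n-1}},
\end{equation*}
with $\kappa = \kappa(t) := (1+\tau+2t^2)/(1-\tau)$. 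The closed form follows by expanding $L_{n-2}^2(-2u) = \sum_{k=0}^{n-2} \binom{n}{k+2}(2u)^k/k!$, integrating termwise against $e^{-\kappa u}$, and simplifying the resulting sum via the elementary identity $\sum_{k=0}^{n-2} \binom{n}{k+2} z^k = z^{-2}[(1+z)^n - 1 - nz]$ applied with $z = 2/\kappa$.

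Combining these manipulations yields
\begin{equation*}
\frac{p_{n,n-2}}{p_{n,n}} = \frac{1}{2\pi}\int_0^\infty \frac{1}{1+t^2}\bigg[\kappa\Big(\tfrac{\kappa+2}{\kappa}\Big)^n - \kappa - 2n\bigg]\, dt,
\end{equation*}
which I will then recast in the stated form via the change of variables $s = 2t^2/(1+\tau+2t^2)$, a bijection $(0,\infty) \to (0,1)$ with inverse $t^2 = (1+\tau)s/[2(1-s)]$. Short calculations give $1+\tau+2t^2 = (1+\tau)/(1-s)$, whence $\kappa = (1+\tau)/[(1-\tau)(1-s)]$ and
\begin{equation*}
\frac{\kappa+2}{\kappa} = 1 + c(1-s), \qquad c := \frac{2(1-\tau)}{1+\tau},
\end{equation*}
precisely the base of the $n$-th power appearing in the lemma. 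The key algebraic identity $\kappa \cdot c(1-s) = 2$ (immediate from the formulas for $\kappa$ and $c$) lets me rewrite $-\kappa - 2n = -\kappa[1 + nc(1-s)]$ and factor $\kappa$ out of the bracket, producing the required renormalisation $(1+c(1-s))^n - nc(1-s) - 1$. A direct Jacobian computation then gives
\begin{equation*}
\frac{\kappa\, dt}{1+t^2} = \frac{(1+\tau)^{3/2}}{2\sqrt{2}\,(1-\tau)} \cdot \frac{s^{-1/2}(1-s)^{-3/2}}{1 - (1-\tau)s/2}\, ds,
\end{equation*}
and together with the prefactor $1/(2\pi)$ this reproduces the constant $(1+\tau)^{3/2}/[4\sqrt{2}\,\pi(1-\tau)]$.

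The main conceptual step is selecting the $1/(1+t^2)$-kernel integral representation of $\erfc$: it produces a Gaussian factor $e^{-x^2 t^2}$ which combines with the $e^{y^2}$ in the original integrand to give the net weight $e^{-\kappa y^2}$, turning the $y$-integration into an elementary Laplace transform of a Laguerre polynomial. Once this choice is made, the substitution $s = 2t^2/(1+\tau+2t^2)$ is essentially dictated by requiring $(\kappa+2)/\kappa$ to match the base of the $n$-th power in the target integrand, and the remaining work is the careful bookkeeping of the $\sqrt{1\pm\tau}$ and $(1-s)^{\pm 1/2}$ factors in the Jacobian, which I expect to be the most error-prone step.
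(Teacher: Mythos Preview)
Your argument is correct and complete: the integral representation of $\erfc$, the Fubini swap, the Laplace-transform evaluation of the Laguerre polynomial, and the substitution $s = 2t^2/(1+\tau+2t^2)$ with its Jacobian all check out and reproduce the stated formula exactly.

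The route, however, differs from the paper's. The paper first integrates $y$ termwise against the known formula
\[
\int_0^\infty \erfc(ax)\,e^{b^2 x^2} x^p\,dx
= \frac{\Gamma(\tfrac{p}{2}+1)}{\sqrt{\pi}\,(p+1)\,a^{p+1}}\,
{}_2F_1\!\Big(\tfrac{p+1}{2},\tfrac{p+2}{2};\tfrac{p+3}{2};\tfrac{b^2}{a^2}\Big),
\]
obtaining a sum of Gauss hypergeometric functions; it then applies a linear transformation of ${}_2F_1$ and the Euler integral representation to convert that sum into an integral over $[0,1]$, at which point the same binomial identity you use collapses the sum. Your approach bypasses the hypergeometric machinery entirely: by inserting the $1/(1+t^2)$-kernel representation of $\erfc$ and swapping integrals, you effectively trade the ${}_2F_1$ detour for a single extra integration variable $t$, so that the $y$-integral becomes an elementary Laplace transform. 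What you gain is a more self-contained and elementary derivation that requires no special-function identities beyond the integral representation of $\erfc$; what the paper's route buys is that each step invokes a catalogued formula, which may be preferable if one wants to track constants via tables rather than by hand. The two arguments are morally dual---the paper's Euler integral for ${}_2F_1$ plays the same role as your auxiliary $t$-variable---but yours is the shorter path.
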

\begin{proof}
    Recall that the Laguerre polynomial can be explicitly written as 
    \begin{equation*}
        L_{n}^{a}(x) = \sum_{k=0}^{n} (-1)^k \binom{n+a}{n-k} \frac{x^k}{k!}.
    \end{equation*}
    Using this and \eqref{eq. p(n,n-2) in terms of p(n,n)}, we have 
    \begin{equation*}
        \frac{p_{n,n-2}}{p_{n,n}} = \sum_{k=0}^{n-2} \int_0^\infty e^{y^2} \erfc\Big( y \sqrt{\frac{2}{1-\tau}} \Big) \, 2^{k+1} \binom{n}{k+2} \frac{y^{2k+1}}{k!} \,dy.
    \end{equation*}
    We use an integral formula of the complementary error function \cite[Section 4.3 Eq.(9)]{GN71}
    \begin{equation*}
        \int_0^\infty \erfc(ax) e^{b^2 x^2} x^p \, dx = \frac{\Gamma(\tfrac{1}{2} p + 1)}{\sqrt{\pi} (p+1) a^{p+1}} \gaussF(\tfrac{p+1}{2}, \tfrac{p+2}{2}; \tfrac{p+3}{2}; \tfrac{b^2}{a^2}),
    \end{equation*}
   valid for $\re b^2 < \re a^2, \ \re p > -1$. Then, we obtain
    \begin{align*}
        \frac{p_{n,n-2}}{p_{n,n}} &= \frac{1}{2\sqrt{\pi}} \sum_{k=0}^{n-2} \binom{n}{k+2} \frac{\Gamma(k+\tfrac{3}{2})}{(k+1)!} ( 1-\tau)^{k+1} \gaussF(k+1, k+\tfrac{3}{2}; k+2; \tfrac{1-\tau}{2})
        \\
        &=\frac{1-\tau}{\sqrt{2\pi(1+\tau)}} \sum_{k=0}^{n-2} \binom{n}{k+2} \frac{\Gamma(k+\tfrac{3}{2})}{(k+1)!} \Big( \frac{ 2(1-\tau)}{1+\tau} \Big)^{k} \gaussF(1, \tfrac{1}{2}; k+2; \tfrac{1-\tau}{2}),
    \end{align*}
    where we used a transformation of the hypergeometric function \cite[Eq.(15.8.1)]{NIST} for the second equality.
    Furthermore, by using the well-known Euler integral formula (see e.g. \cite[Eq.(15.6.1)]{NIST}), we have
    \begin{align*}
        \frac{p_{n,n-2}}{p_{n,n}} &= \frac{1-\tau}{\pi \sqrt{2(1+\tau)}} \int_0^1 \frac{s^{-\frac{1}{2}} (1-s)^{\frac{1}{2}} }{ 1 - \tfrac{1-\tau}{2} s  } \sum_{k=0}^{n-2} \binom{n}{k+2} \Big( \frac{ 2(1-\tau)(1-s) }{1+\tau} \Big)^{k} \, ds.
    \end{align*}
    The summation can be further simplified using the binomial theorem, which gives the desired result.
\end{proof}

We shall use the following elementary lemma. 

\begin{lemma} \label{lem. Series expansion poly to exp sH}
    Let $M$ be a nonnegative integer. Then as $N \to \infty$, we have the following. 
    \begin{itemize}
        \item For given $x \in \mathbb{R}$, we have 
         \begin{equation} \label{eq. asymp expansion of (1-x/n)^n}
        \Big( 1 - \frac{x}{N} \Big)^N = e^{-x} \bigg[ \sum_{ q = 0}^M \mathcal{F}_q(x) \Big( \frac{x}{N} \Big)^q + O(N^{-M-1}) \bigg], 
    \end{equation}
    where $\mathcal{F}_0(x) = 1$ and
    \begin{equation}
        \mathcal{F}_q(x) = \sum_{k=1}^q  \frac{(-x)^k}{k!} \sum_{\substack{j_1 + \cdots + j_k = q, \\ j_1, \ldots, j_k \geq 1}} \prod_{s=1}^k \frac{1}{(j_s+1)}, \qquad \textup{for } q \ge 1.  
    \end{equation} 
    \item For given $t \in \mathbb{R}$, we have 
     \begin{equation} \label{eq. power series exp(f(x)) wH}
        \Big( 1 + \frac{2\alpha^2 t}{2N-\alpha^2} \Big)^N = e^{\alpha^2 t} \bigg[ \sum_{q = 0}^M \mathcal{G}_q(t) \Big( \frac{\alpha^2}{N} \Big)^q + O(N^{-M-1}) \bigg],
    \end{equation}
    where $\mathcal{G}_0(t) = 1$ and
    \begin{align}
        \mathcal{G}_q(t) &= \sum_{k=1}^q \frac{\alpha^{2k} }{2^{q+k} k!} \sum_{\substack{j_1 + \cdots + j_k = q, \\ j_1, \ldots, j_k \geq 1}}  \prod_{s=1}^k \frac{1 - (1 - 2t)^{j_s + 1}}{j_s + 1} , \qquad \textup{for } q \ge 1.  
    \end{align}
    \end{itemize}
\end{lemma}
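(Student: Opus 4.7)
The plan is to take logarithms, expand in $1/N$, and then re-exponentiate, identifying the resulting cluster-style combinatorial sums with the stated $\mathcal{F}_q$ and $\mathcal{G}_q$.

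For part (i), I would start from the Taylor series $\log(1-z) = -\sum_{j\ge 1} z^j/j$, valid for $|z|<1$. For $N$ large enough (say $N\ge 2|x|$) this gives
\begin{equation*}
N\log\Bigl(1-\frac{x}{N}\Bigr) + x \;=\; -\sum_{j\ge 1}\frac{x^{j+1}}{(j+1)\,N^{j}},
\end{equation*}
and therefore
\begin{equation*}
\Bigl(1-\tfrac{x}{N}\Bigr)^{N} = e^{-x}\exp\!\Bigl(-\sum_{j\ge 1}\tfrac{x^{j+1}}{(j+1)N^{j}}\Bigr).
\end{equation*}
Now I would expand the outer exponential using $e^{w}=\sum_{k\ge 0} w^{k}/k!$ and write the $k$-th power as a multinomial sum indexed by tuples $(j_{1},\dots,j_{k})$ of positive integers. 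Collecting the coefficient of $N^{-q}$ amounts to restricting $j_{1}+\cdots+j_{k}=q$, which immediately produces
\begin{equation*}
\sum_{k=1}^{q}\frac{(-x)^{k}}{k!}\sum_{\substack{j_{1}+\cdots+j_{k}=q\\ j_{s}\ge 1}}\prod_{s=1}^{k}\frac{1}{j_{s}+1}\cdot x^{q} \;=\; x^{q}\,\mathcal{F}_{q}(x),
\end{equation*}
matching the stated form. The uniform error $O(N^{-M-1})$ then follows by truncating both the inner Taylor series and the outer exponential series at appropriate orders (depending on $M$) and bounding the geometric tails, using that $x$ is fixed.

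For part (ii), the cleanest route is to observe the algebraic identity
\begin{equation*}
1 + \frac{2\alpha^{2}t}{2N-\alpha^{2}} \;=\; \frac{\,2N - \alpha^{2}(1-2t)\,}{2N-\alpha^{2}},
\end{equation*}
so that one can expand $N\log(1+u)$ with $u = 2\alpha^{2}t/(2N-\alpha^{2})$ and use the geometric series $1/(2N-\alpha^{2}) = (2N)^{-1}\sum_{m\ge 0}(\alpha^{2}/(2N))^{m}$. After substituting and subtracting the leading $\alpha^{2}t$ contribution, the residual is a convergent series in $1/N$; exponentiating and grouping via the same cluster-sum procedure as in part (i) will produce a multinomial expansion whose $(j_{1},\dots,j_{k})$ term carries a factor of the form $\prod_{s}[1-(1-2t)^{j_{s}+1}]/(j_{s}+1)$. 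The crucial computation is identifying that the coefficient of $(\alpha^{2}/N)^{q}$ is exactly $\mathcal{G}_{q}(t)$; in practice this reduces to verifying, for each factor of $u^{j_{s}}$, that the contribution $(2N - \alpha^{2}(1-2t))^{j_{s}+1} - (2N-\alpha^{2})^{j_{s}+1}$ expanded in powers of $\alpha^{2}/(2N)$ yields the stated polynomial in $t$.

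The main obstacle here is purely bookkeeping: matching the combinatorial cluster expansion with the specific $\mathcal{F}_{q}$ and $\mathcal{G}_{q}$ as written, and in part (ii) keeping track of both sources of $1/N$ (from $\log(1+u)$ and from the expansion of $1/(2N-\alpha^{2})$) so that the final sum is organised correctly in powers of $\alpha^{2}/N$. The analytic content, namely the uniform remainder estimate, is straightforward once one fixes $N$ large compared to $|x|$ or $\alpha^{2}$ respectively and truncates each Taylor series at order $M+1$.
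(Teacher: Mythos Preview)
Your proposal is correct and follows essentially the same route as the paper: take logarithms, expand the log series in powers of $1/N$, and re-exponentiate via the cluster formula $\exp\bigl(\sum_{j\ge1}a_j w^j\bigr)=\sum_{q\ge0}\bigl(\sum_{k}\tfrac{1}{k!}\sum_{j_1+\cdots+j_k=q}\prod_s a_{j_s}\bigr)w^q$. For part (ii) the paper proceeds exactly as you do (geometric expansion of $1/(2N-\alpha^2)$ inside $\log(1+u)$), only it records the intermediate coefficient of $(\alpha^2/N)^j$ in the logarithm as $t\,\mathfrak{G}_j(t)$ with $\mathfrak{G}_j(t)=\frac{1-(1-2t)^j}{2^j j\,t}$ before exponentiating, which is precisely what your factorisation $(2N-\alpha^2(1-2t))/(2N-\alpha^2)$ is designed to produce.
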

\begin{proof}
    Note that by expanding the logarithm, we have 
    \begin{align*}
        \Big( 1 - \frac{x}{N} \Big)^N &= \exp\Big(- x - \sum_{j = 1}^\infty \frac{x}{j+1} \Big( \frac{x}{N} \Big)^j \Big).
    \end{align*}
   Then the expansion \eqref{eq. asymp expansion of (1-x/n)^n} follows from 
    \begin{equation} \label{eq. power series exp(f(x)) sH}
        \exp\Big( \sum_{j=0}^\infty a_j x^j \Big) = \sum_{j=0}^\infty b_j x^j, \qquad     b_q = \sum_{k=0}^q \sum_{\substack{j_1 + \cdots + j_k = q, \\ j_1, \ldots, j_k \geq 1}} \frac{1}{k!} \prod_{s=1}^k a_{j_s}. 
    \end{equation}

    The second assertion is proved similarly. We use series expansions of $1/(2N-\alpha^2)$ and logarithmic function to obtain
    \begin{align*}
        \log\Big( 1 + \frac{2\alpha^2 t}{2N - \alpha^2} \Big) &= \log\Big( 1 + \sum_{j \geq 1} \Big(\frac{\alpha^2}{2N} \Big)^j \frac{\alpha^2 t}{N} \Big)
        \\
        &= \sum_{k \geq 1} (-1)^{k-1} \frac{1}{k} \Big( \sum_{j \geq 1} \Big(\frac{\alpha^2}{2N} \Big)^j \frac{\alpha^2 t}{N} \Big)^k = \sum_{q \geq 1} t \, \mathfrak{G}_q(t)  \Big(\frac{\alpha^2}{N}\Big)^q
    \end{align*}
    with
    \begin{equation*}
        \mathfrak{G}_j(t) := \sum_{k=0}^{j-1} \frac{(-1)^{k}}{2^{j-k-1}} \frac{1}{k+1} \binom{j-1}{k} t^{k} = \frac{ 1 - (1-2t)^j }{ 2^j j t }.
    \end{equation*}
    Thus, we have
    \begin{align*}
        \Big( 1 + \frac{2\alpha^2 t}{2N-\alpha^2} \Big)^N &= \exp\Big( \alpha^2 t + \sum_{j=1}^\infty \alpha^2 t \, \mathfrak{G}_{j+1}(t) \Big( \frac{\alpha^2}{N}\Big)^j \Big).
    \end{align*}
    Then the desired asymptotic expansion \eqref{eq. power series exp(f(x)) wH} follows from \eqref{eq. power series exp(f(x)) sH}.
\end{proof}

We are now ready to prove Theorem~\ref{Thm. LDP rate function l=1}. 

\begin{proof}[Proof of Theorem~\ref{Thm. LDP rate function l=1}]
We first consider the strongly non-Hermitian regime. 
Notice that for given $\tau \in [0,1)$, the integrand in \eqref{eq. integral rep of p(n,n-2)} is exponentially large near $s=0$.
Thus for any $\epsilon \in (0,1)$, there exists some $\delta>0$ such that
\begin{align*}
   \frac{p_{n,n-2}}{p_{n,n}} &= \frac{ (1+\tau)^{\frac{3}{2}} }{ 4\sqrt{2} \pi (1-\tau) } \int_0^\epsilon \frac{s^{-\frac{1}{2}} (1-s)^{-\frac{3}{2}}}{ 1 - \tfrac{1-\tau}{2} s  } \Big( 1 + \tfrac{2(1-\tau)}{1+\tau} (1-s) \Big)^n  ds \, \Big( 1 + O(e^{-\delta n}) \Big)
   \\
   &= \frac{ (1+\tau)^{\frac{3}{2}} }{ 4\sqrt{2} \pi (1-\tau) } \frac{1}{\sqrt{n}} \Big( \frac{3-\tau}{1+\tau} \Big)^n \int_0^{\epsilon n} \frac{s^{-\frac{1}{2}} (1-\tfrac{s}{n})^{-\frac{3}{2}}}{ 1 - \tfrac{1-\tau}{2n} s  } \Big( 1 - \frac{2(1-\tau)}{3-\tau} \frac{s}{n} \Big)^n  ds \, \Big( 1 + O(e^{-\delta n}) \Big).
  \end{align*}
   By using the binomial series expansion and \eqref{eq. asymp expansion of (1-x/n)^n}, we have  
    \begin{align*}
        \frac{(1-\tfrac{s}{n})^{-\frac{3}{2}}}{ 1 - \tfrac{1-\tau}{2n} s  } \Big( 1 - \frac{2(1-\tau)}{3-\tau} \frac{s}{n} \Big)^n &= \sum_{k \geq 0} \Big( \frac{1-\tau}{2} \Big)^k \Big( \frac{s}{n} \Big)^k \sum_{j \geq 0} \sqrt{\frac{2}{\pi}} \frac{ \Gamma(j+\frac{3}{2}) }{ \Gamma(j+1) } \Big( \frac{s}{n} \Big)^j \sum_{q \geq 0} e^{- a_\tau s} \mathcal{F}_q(a_\tau s) \Big( \frac{a_\tau s}{n} \Big)^q
        \\
        &= \sqrt{\frac{2}{\pi}} e^{- a_\tau s} \sum_{r \geq 0} A_r(s) \Big( \frac{s}{n} \Big)^{r},
    \end{align*}
    where  
    \begin{equation*}
     a_\tau = \frac{2(1-\tau)}{3-\tau}, \qquad    A_r(s) = \sum_{j = 0}^r \sum_{q=0}^{r-j} \Big( \frac{1-\tau}{2} \Big)^{r-j-q} \frac{ \Gamma(j+\frac{3}{2}) }{ \Gamma(j+1) } \mathcal{F}_q(a_\tau s) a_\tau^q.
    \end{equation*}
    Since $A_r(s)$ is a polynomial in $s$, we can change the range of integration from $[0,\epsilon n]$ to $[0,\infty)$ by introducing an exponentially small error.
    Therefore, for some $\delta'>0$, we have
    \begin{align*}
        \frac{p_{n,n-2}}{p_{n,n}} &= \frac{1}{\sqrt{n}} \Big( \frac{3-\tau}{1+\tau} \Big)^n \sum_{r \geq 0} \mathcal{A}_r(\tau)   \frac{1}{n^r}  \Big( 1 + O(e^{-\delta' n}) \Big),
    \end{align*}
    where
    \begin{equation}
        \mathcal{A}_r(\tau) := \frac{ (1+\tau)^{\frac{3}{2}} }{ 4 \pi^\frac{3}{2} (1-\tau) } \int_0^\infty s^{r-\frac{1}{2}} e^{-a_\tau s} A_r(s) \, ds.
    \end{equation}
    Then, straightforward computations show that it can be evaluated as \eqref{eq. coeff mathcal A}, which completes the first part of the theorem.

    Next, we consider the weakly non-Hermitian regime. Substituting $\tau = 1 - \alpha^2/n$ and changing variable $s \mapsto 1-s$, we rewrite \eqref{eq. integral rep of p(n,n-2)} as
    \begin{equation*}
        \frac{p_{n,n-2}}{p_{n,n}} = \frac{n}{2\pi \alpha^2} \Big( 1 - \frac{\alpha^2}{2n} \Big)^{\frac{1}{2}} \int_0^1 \frac{(1-s)^{-\frac{1}{2}} s^{-\frac{3}{2}}}{  1 + \frac{\alpha^2 s}{2n - \alpha^2} } \bigg[ \Big( 1 + \frac{2\alpha^2 s}{2n - \alpha^2} \Big)^n - n \frac{2\alpha^2}{2n - \alpha^2} s -1\bigg] \, ds.
    \end{equation*}
    Using \eqref{eq. power series exp(f(x)) wH} and the series expansions
    \begin{align*}
        \frac{1}{1 + \frac{\alpha^2 s}{2n - \alpha^2}} = 1 - \sum_{k=0}^\infty s (1-s)^k \Big( \frac{\alpha^2}{2n} \Big)^{k+1}, \qquad  \frac{\alpha^2 s}{1 - \frac{\alpha^2 s}{2n}} = \alpha^2 s \sum_{k=0}^\infty \Big( \frac{\alpha^2}{2n} \Big)^k,
    \end{align*}
    we obtain
    \begin{equation*}
        \frac{p_{n,n-2}}{p_{n,n}} = \frac{n}{2} \Big( 1 - \frac{\alpha^2}{2n} \Big)^{\frac{1}{2}} \sum_{k \geq 0} \mathfrak{s}_k(\alpha) \Big( \frac{\alpha^2}{n} \Big)^k,
    \end{equation*}
    where
    \begin{equation}
        \mathfrak{s}_0(\alpha) := \frac{1}{\pi \alpha^2} \int_0^1 t^{-\frac{3}{2}} (1-t)^{-\frac{1}{2}} ( e^{\alpha^2 t} - \alpha^2 t - 1 ) \, dt = e^{\frac{\alpha^2}{2}} (I_0(\tfrac{\alpha^2}{2}) + I_1(\tfrac{\alpha^2}{2})) -1 .
    \end{equation}
    and
    \begin{equation}
        \mathfrak{s}_k(\alpha) := 
            \frac{1}{\pi \alpha^2} \int_0^1 t^{-\frac{3}{2}} (1-t)^{-\frac{1}{2}} \bigg( e^{\alpha^2 t} \Big[ \mathcal{G}_k(t) - \sum_{r=1}^k \mathcal{G}_{k-r}(t) \frac{t(1-t)^{r-1}}{2^r} \Big] - \frac{\alpha^2(1-t) - 1}{2^k} t(1-t)^{k-1}  \bigg) \, dt
    \end{equation}
    for $k \geq 1$. Then, by expanding $(1- \frac{\alpha^2}{2n})^{-\frac{1}{2}}$, we conclude \eqref{eq. LDP wH l=1}.
\end{proof}

\section{General case: strong non-Hermiticity} \label{Sec. l finite sH}

In this section, we prove Theorem~\ref{Thm. LDP rate function} (i). As previously mentioned, we shall apply a potential theoretic approach.  
For this purpose, we define 
\begin{align}
\mathsf{H}_{\rm sc}(\vz) = \sum_{k=1}^l Q_{\rm sc}(z_k), \qquad  Q_{\rm sc}(z) = \int_{\R} \log \frac{1}{|z-t|^2} \, d\rho_{\rm sc}(t), \qquad \rho_{\rm sc}(t) =   \frac{ \sqrt{2(1+\tau) - t^2} }{\pi (1+\tau)}\,  \mathbbm{1}_{ \{ t^2 \leq 2(1+\tau) \}  }.
\end{align}
Here, the subscript ``$\rm sc$'' stands for ``semicircle''.
The overall strategy of our proof can be summarised as follows.
\begin{itemize}
    \item We construct elementary bounds for $p_{n,m}$ in Lemma~\ref{lem. bounds on p_nm: 1}, and approximate them using the \emph{strong Szeg\H{o} theorem} as demonstrated in Lemma~\ref{lem. bounds on p_nm: 2}. The approximated bounds can be expressed in terms of the potential $Q_n^{(r)}$ in \eqref{eq. def of Q_n^(r)}.
    \smallskip  
    \item Next, we analyse the approximated potential $Q^{(r)}_n$, and show that it attains its global minimum in $\HH$ for all $n$ (Lemma~\ref{lem. existence of global minimum for Q_n's}). Furthermore, we compute the precise location and corresponding value with $O(n^{-1})$ accuracy.
    \smallskip   
    \item We analyse the integral with exponential weights in Lemmas~\ref{lem. apprx int_H^l to int_K^l} and \ref{lem. asymp vandermonde integral}. Using these results, we show that the bounds for $p_{n,m}$ obtained in the previous step converge to the same value up to a multiplicative bounded constant. This establishes Theorem~\ref{Thm. LDP rate function} for the case of strong non-Hermiticity.
\end{itemize}

\subsection{Mean field approximation of the real eigenvalue distribution}

It is convenient to introduce the following notation: for a constant $r\geq 0$ and $\vz = (x_1+i y_1, \ldots, x_l + i y_l) \in \HH^l$, 
\begin{align}
    \HH^l_{r} = \{ \vz^{(r)} : \vz \in \HH^l \}, \qquad \vz^{(r)} = (x_1 + i \max\{y_1, r\}, \ldots, x_l + i \max\{y_l, r\}).
\end{align}
In Lemma~\ref{lem. bounds on p_nm: 1} below, we derive bounds for the probability $p_{n,m}$ by truncating the domains of the integrals into subdomains where the integrand remains regular for our analysis. Similar results can be found in the literature, see e.g. \cite[Eq.(2.1)]{Shc11}, \cite[Eq.(4.26)]{Joh98}, and \cite[Lemma 1]{BPS95}.

We proceed our analysis based on a statement given in \cite[Lemma 1]{BPS95} which is formulated as follow. Suppose a function $V: \R \to \R$ is bounded below, and assume that there exist constants $\epsilon>0$ and $s > 2e$ such that 
$$ 
V(\lambda) - \max_{\lambda' \in [-e,e]} V(\lambda') \geq (2+\epsilon) \log |\lambda| 
$$ 
for any $|\lambda| > s$. 
In terms of the Hamiltonian  
\begin{equation}
    \mathsf{H}_N(\lambda_1, \ldots ,\lambda_N) = \frac{1}{N} \sum_{1 \leq j<k \leq N} \log \frac{1}{|\lambda_j - \lambda_k|} + \sum_{j=1}^N V(\lambda_j),
\end{equation}
we define the probability density function
\begin{align}
    \mathcal{P}_{N,s}(\lambda_1, \ldots, \lambda_N) &= \frac{1}{Z_{N,s}} e^{- N \mathsf{H}_N(\lambda_1, \ldots, \lambda_n)} \prod_{j=1}^N \mathbf{1}_{[-s,s]}(\lambda_j),
\end{align}
where $Z_{N,s}$ is the partition function. In particular, we write $ \mathcal{P}_{N} =  \mathcal{P}_{N,\infty} $ for the ensemble without constraint and $Z_N$ for the associated partition function.  
Denoting by $R_k$ and $R_{k,s}$ the $k$-point functions associated to $\mathcal{P}_N$ and $\mathcal{P}_{N,s}$, respectively, we have
\begin{equation} \label{eq. BPS95 lemma 1}
    |R_k(\lambda_1, \ldots, \lambda_k) - R_{k,s}(\lambda_1, \ldots, \lambda_k)| <  R_{k,s}(\lambda_1, \ldots, \lambda_k) e^{-C N}
\end{equation}
for $\lambda_1, \ldots, \lambda_k \in [-s,s]$, where the constant $C>0$ only depends on $\epsilon$ and $s$.
Integrating both handed side of \eqref{eq. BPS95 lemma 1} over $[-s,s]^k$, we then obtain
\begin{equation} \label{eq. BPS95 lemma 1 for probability}
    \bigg| \frac{Z_{N,s}}{Z_N} - 1 \bigg| = \bigg| \int_{[-s,s]^m} \mathcal{P}_N(\lambda_1, \ldots, \lambda_N) d\lambda_1 \cdots d\lambda_N - 1 \bigg| < e^{-C N}.
\end{equation}

Recall that the Hamiltonians $\mathsf{H}_{11}$, $\mathsf{H}_{12}$ and $\mathsf{H}_{22}$ are given by \eqref{def of Ham H11}, \eqref{def of Ham H12} and \eqref{def of Ham H22}, respectively.
We then have the following. 

\begin{lemma} \label{lem. bounds on p_nm: 1}
    For any $n, m, l$, we have
    \begin{equation}
        q_{n,m}^{\rm (low)} \leq p_{n,m} \leq q_{n,m}^{\rm (upp)},
    \end{equation}
    where
    \begin{align}
        q_{n,m}^{\rm (low)} &= \frac{1}{Z_{m,l}} \int_{\HH^l_r} \int_{[-s,s]^m} e^{-n (\mathsf{H}_{11}(\vlambda) + \mathsf{H}_{12}(\vlambda, \vz ) + \mathsf{H}_{22}(\vz))} d\vlambda \, d^2\vz,
        \\
        q_{n,m}^{\rm (upp)} &= \frac{1}{Z_{m,l}} \int_{\HH^l} \int_{[-s,s]^m} e^{-n (\mathsf{H}_{11}(\vlambda) + \mathsf{H}_{12}(\vlambda, \vz^{(r)}) + \mathsf{H}_{22}(\vz))} d\vlambda \, d^2\vz \, ( 1 + e^{-c n})
    \end{align}
    for a sufficiently large constant $s > \sqrt{2(1+\tau)}$, and a constant $c>0$ that only depends on $s$ and $r$.
\end{lemma}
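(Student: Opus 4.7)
The lower bound is immediate. Since $\HH^l_r \subset \HH^l$ and $[-s,s]^m \subset \R^m$, restricting both domains of integration in \eqref{eq. p_nm integral vz} can only decrease the value of the non-negative integrand, and on $\HH^l_r$ one has $\vz = \vz^{(r)}$, so the restricted integrand matches the one appearing in $q_{n,m}^{(\rm low)}$. This yields $q_{n,m}^{(\rm low)} \leq p_{n,m}$.

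For the upper bound, my plan is to combine a monotone substitution in the $\mathsf{H}_{12}$ interaction with a tail-truncation estimate of BPS95-type for $\vlambda$. For the substitution, note that for real $\lambda$ and $z = x + iy \in \HH$,
\[
|\lambda - z^{(r)}|^2 = (\lambda - x)^2 + \max\{y, r\}^2 \geq (\lambda - x)^2 + y^2 = |\lambda - z|^2,
\]
which gives the pointwise bound $\mathsf{H}_{12}(\vlambda, \vz^{(r)}) \leq \mathsf{H}_{12}(\vlambda, \vz)$ and hence
\[
p_{n,m} \leq \frac{1}{Z_{m,l}} \int_{\HH^l} e^{-n \mathsf{H}_{22}(\vz)} \int_{\R^m} e^{-n(\mathsf{H}_{11}(\vlambda) + \mathsf{H}_{12}(\vlambda, \vz^{(r)}))} \, d\vlambda \, d^2\vz.
\]

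Next, I would apply the estimate \eqref{eq. BPS95 lemma 1 for probability} to the inner $\vlambda$-integral. For each fixed $\vz$, this is the Gibbs weight of a real log-gas of $m = n-2l$ particles with normalised Hamiltonian $m \mathsf{H}_m(\vlambda) = n(\mathsf{H}_{11}(\vlambda) + \mathsf{H}_{12}(\vlambda, \vz^{(r)}))$ and effective one-body potential
\[
V_{\vz^{(r)}}(\lambda) = \frac{n}{m}\frac{\lambda^2}{2(1+\tau)} + \frac{1}{m} \sum_{k=1}^l \log \frac{1}{|\lambda - z_k^{(r)}|^2}.
\]
The quadratic confinement is independent of $\vz$, while the logarithmic perturbation is bounded above on $\R$ by $-\tfrac{2l}{m}\log r$ (using $|\lambda - z_k^{(r)}| \geq y_k^{(r)} \geq r$) and grows at most logarithmically in $|\lambda|$ at infinity, so that the quadratic term dominates. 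Choosing $s > \sqrt{2(1+\tau)}$ sufficiently large, the tail hypothesis stated just before \eqref{eq. BPS95 lemma 1} is met, so \eqref{eq. BPS95 lemma 1 for probability} yields
\[
\int_{\R^m} e^{-m\mathsf{H}_m(\vlambda)} d\vlambda \leq (1+e^{-cn}) \int_{[-s,s]^m} e^{-m\mathsf{H}_m(\vlambda)} d\vlambda,
\]
which upon substitution back produces the claimed form of $q_{n,m}^{(\rm upp)}$.

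The main technical issue I expect is ensuring that the constant $c$ above can be chosen \emph{uniformly} in $\vz$, so that the factor $(1+e^{-cn})$ can legitimately be pulled outside the outer $\vz$-integral. I would handle this by observing that the $\vz$-dependent contribution to $V_{\vz^{(r)}}$ grows only logarithmically in $|\lambda|$, with a mild prefactor controlled by $\log(1+|\vz^{(r)}|)$, and is therefore dominated either by a further enlargement of $s$ on any bounded $\vz$-region, or for $|\vz|$ large by the Gaussian-type decay of $e^{-n\mathsf{H}_{22}(\vz)}$ coming from the quadratic part of $Q_{\tau,n}$ in \eqref{eq. Q_{tau,n}}. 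Combining these produces a remainder of the required form $e^{-cn}$ with $c>0$ depending only on $r$ and $s$.
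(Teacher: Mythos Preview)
Your proposal is correct and follows the same route as the paper: restrict domains for the lower bound; for the upper bound, use the pointwise inequality $\mathsf{H}_{12}(\vlambda,\vz^{(r)})\le \mathsf{H}_{12}(\vlambda,\vz)$ and then apply the BPS95 truncation \eqref{eq. BPS95 lemma 1 for probability} to the inner $\vlambda$-integral. The paper simply asserts the uniformity of $c$ in $\vz$; your concern is legitimate, but the resolution does not require $\mathsf{H}_{22}$ --- the BPS95 constant depends only on $\epsilon$ and $s$, and since the $\vz$-dependent perturbation enters the tail quantity $V_{\vz^{(r)}}(\lambda)-\max_{[-e,e]}V_{\vz^{(r)}}$ only through ratios $|\lambda-z_k^{(r)}|/|\lambda'-z_k^{(r)}|$, which are bounded by $O(|\lambda|+1)$ uniformly in $z_k^{(r)}$ (use $|\lambda'-z_k^{(r)}|\ge \max(r,|z_k^{(r)}|-e)$), the quadratic term dominates and the tail hypothesis holds with $\epsilon,s$ independent of $\vz$.
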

\begin{proof}
    The lower bound is obvious.
    For the upper bound, it follows from
    \begin{equation*}
        \mathsf{H}_{12}(\vlambda, \vz) = \frac{1}{n} \sum_{k=1}^m \sum_{j=1}^l \log \frac{1}{|\lambda_k - z_j|^2}  \, \geq \,  \frac{1}{n} \sum_{k=1}^m \sum_{j=1}^l \log \frac{1}{|\lambda_k - z^{(r)}_j|^2} = \mathsf{H}_{12}(\vlambda, \vz^{(r)})
    \end{equation*}
    that
    \begin{align*}
        p_{n,m} \leq \frac{1}{Z_{m,l}} \int_{\HH^l} \int_{\R^m} e^{-n (\mathsf{H}_{11}(\vlambda) + \mathsf{H}_{12}(\vlambda, \vz^{(r)} ) + \mathsf{H}_{22}(\vz))} d\vlambda \, d^2\vz.
    \end{align*}
    Note that the sum of Hamiltonians $$\mathsf{H}_{11}(\vlambda) + \mathsf{H}_{12}(\vlambda, \vz^{(r)})  = \frac{1}{n} \sum_{1 \leq j < k \leq m} \log \frac{1}{|\lambda_j - \lambda_k|} + \sum_{j=1}^m \Big( \frac{\lambda_j^2}{2(1+\tau)} + \frac{2}{n} \sum_{k=1}^l \log \frac{1}{|\lambda_j - z_k^{(r)}|} \Big)$$ can be interpreted as the Hamiltonian of the eigenvalues of the GOE with finite point charge insertions at $z_1^{(r)}, \ldots, z_l^{(r)}$.
    Especially, the points $z_k^{(r)}$ are bounded away from the real line which provides sufficient regularity to apply \eqref{eq. BPS95 lemma 1 for probability} for sufficiently large constant $s$.
    Therefore, there exists a constant $c>0$ depending only on $s$ and $r$ such that 
    \begin{equation*}
        \int_{\R^m} e^{-n (\mathsf{H}_{11}(\vlambda) + \mathsf{H}_{12}(\vlambda, \vz^{(r)} ) )} d\vlambda \leq \int_{[-s,s]^m} e^{-n (\mathsf{H}_{11}(\vlambda) + \mathsf{H}_{12}(\vlambda, \vz^{(r)} ) )} d\vlambda ( 1 + e^{-c n})
    \end{equation*}
    for any $\vz \in \HH^l$.
    Putting them together, we obtain the desired result for the upper bound.
\end{proof}

We apply the strong Szeg\H{o} theorem for the GOE to illustrate the mean-field behaviour of the real eigenvalues of the elliptic GinOE, see e.g. \cite[Theorem 1]{Shc13}. This theorem was established in \cite{Joh98} in a more general framework, the convergence rate was derived in \cite{Shc13}, and the full asymptotic expansion is provided in \cite{BG13}.

Let $(\lambda_1, \ldots, \lambda_N)$ be the eigenvalues of the GOE of size $N$ normalised in a way that the limiting spectrum, the support of the semicircle law is given by $S = [-a,a] \subset \R$ with $a = \sqrt{2(1+\tau)}$. Consider a real-valued test function $h$ which is supported in an $\epsilon$-neighbourhood of $S$ for some $\epsilon>0$, and has bounded derivatives $\Vert \partial h \Vert_\infty, \Vert \partial^6 h \Vert_\infty \leq N^{1/2} \log N $.
Then, we have
\begin{align}
\begin{split} \label{eq. Thm. Strong Szego, uniform.}
    \log \E(e^{\sum_j h(\lambda_j) - N (h, \rho_{\rm sc}) }) = (h, \nu) + (A_{S} h, h) + O\Big(\frac{\Vert \partial h \Vert_\infty^3 + \Vert \partial^6 h \Vert_\infty^3}{N} \Big).
\end{split}
\end{align}
Here, $(\cdot, \cdot)$ is the inner product in $L^2(\R)$, the operator $A_{S}$ is given by 
\begin{equation}
    A_{S} h (x) = \frac{1}{8\pi^2 \sqrt{a^2 - x^2}} \mathbf{1}_{[-a,a]}(x) \operatorname{p.v.} \int_{-a}^a \frac{\partial h(t) \sqrt{a^2 - t^2}}{x-t} \,dt,
\end{equation}
where p.v. denotes the principal value, and the signed measure $\nu$ is given by 
\begin{equation}
    (h, \nu) = \frac{1}{8} \Big( h(-a) + h(a) \Big) - \frac{1}{4\pi} \int_{-a}^a \frac{h(t)}{\sqrt{a^2 - t^2}} \, dt.
\end{equation}

Recall that $Z_{m,l}$ is given by \eqref{eq. def of widetilde Z_ml}, and $p_{n,n}$ is given by \eqref{eq. p n n closed form}. We write 
\begin{equation}
  Z_{ {\rm GOE}(m)}  = p_{m,m} Z_{m,0} \label{eq. def of Z_GOE(m)}
\end{equation}
for the partition function of the GOE. 
We define 
    \begin{align} \label{eq. def of T}
        T(\vz) &= (h_{\vz}, \nu) + (A_S h_{\vz}, h_{\vz})
    \end{align}
    where
    \begin{equation}
        h_{\vz}(\lambda) = \Big(- \frac{l \lambda^2}{1+\tau} + \sum_{j=1}^l \log |\lambda - z^{(r)}_j|^2 \Big) H_{\frac{s+a}{2}, \frac{s-a}{2}}(\lambda).
    \end{equation}
    Here, $H_{c,\epsilon}(\lambda) = \mathbf{1}_{[-c, c]}*\varphi_{\epsilon}(\lambda)$ for given $c, \epsilon>0$, where $\varphi_\epsilon(x)=C^{-1}\exp(\frac{1}{(x/\epsilon)^2-1}) \mathbf{1}_{\{|x|\leq \epsilon\}}$ is the standard mollifier with normalising constant $C$. 
    We note that $\mathbf{1}_{[-a,a]} \leq H_{\frac{s+a}{2}, \frac{s-a}{2}} \leq \mathbf{1}_{[-s,s]}$.

\begin{lemma} \label{lem. bounds on p_nm: 2}
    Let $l$ be a fixed nonnegative integer, $r>0$ be a constant. Then, there exist $\epsilon>0$ such that
    \begin{equation}  \label{eq. bound of p_nm}
        p_{n,m}^{\rm (low)} (1 - \epsilon n^{-1}) \leq
        p_{n,m} \leq p_{n,m}^{\rm (upp)} (1 + \epsilon n^{-1})
    \end{equation}
    for all $n$, where
    \begin{align}
        p_{n,m}^{\rm (low)} &= \frac{Z_{{\rm GOE}(m)}}{Z_{m,l}} \int_{\HH^l_r} e^{- m \mathsf{H}_{\rm sc}(\vz) -n \mathsf{H}_{22}(\vz) + T(\vz)} d^2\vz ,
        \\
        p_{n,m}^{\rm (upp)} &= \frac{Z_{{\rm GOE}(m)}}{Z_{m,l}} \int_{\HH^l} e^{-m \mathsf{H}_{\rm sc}(\vz^{(r)}) - n \mathsf{H}_{22}(\vz) + T(\vz^{(r)})} d^2\vz.
    \end{align} 
\end{lemma}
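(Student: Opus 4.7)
The plan is to start from the bounds in Lemma~\ref{lem. bounds on p_nm: 1} and evaluate the inner $\vlambda$-integral by recognising it as a GOE expectation of a linear statistic, then applying the strong Szeg\H{o} theorem \eqref{eq. Thm. Strong Szego, uniform.} to read off the constant $T$.

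The key algebraic step splits $n=m+2l$ in the Gaussian confinement, giving
\[
n\mathsf{H}_{11}(\vlambda) + n\mathsf{H}_{12}(\vlambda, \vz^{(r)}) = m \mathsf{H}_{m}^{\rm GOE}(\vlambda) - \sum_{j=1}^{m} h_{\vz^{(r)}}(\lambda_j),
\]
where $m\mathsf{H}_{m}^{\rm GOE}(\vlambda) = \sum_{j<k} \log|\lambda_j - \lambda_k|^{-1} + \tfrac{m}{2(1+\tau)} \sum_j \lambda_j^2$ is the Hamiltonian of a GOE of size $m$ whose limiting spectrum is exactly $S$ (the quadratic piece in $h_{\vz^{(r)}}$ is precisely the residual $2l \lambda^2/(2(1+\tau))$). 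Setting $(n,m,l) \leftarrow (m,m,0)$ in \eqref{eq. p_nm integral vz} identifies $Z_{{\rm GOE}(m)} = p_{m,m} Z_{m,0}$ as the unrestricted partition function of $\mathsf{H}_m^{\rm GOE}$, so the inner integral in $q_{n,m}^{\rm (upp)}$ becomes $Z_{{\rm GOE}(m)}\, \E^{{\rm GOE}(m)}\bigl[\, e^{\sum_j h_{\vz^{(r)}}(\lambda_j)} \mathbf{1}_{[-s,s]^m}\,\bigr]$. The indicator is dropped at cost $1+O(e^{-cn})$ via \eqref{eq. BPS95 lemma 1 for probability}, applied to the effective potential $\lambda^2/(2(1+\tau)) - h_{\vz^{(r)}}(\lambda)/m$ whose quadratic growth persists uniformly, since $\im(z_k^{(r)}) \geq r$.

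Applying \eqref{eq. Thm. Strong Szego, uniform.} to a smooth compactly supported extension of $h_{\vz^{(r)}}$ (the extension is immaterial because $T$ depends only on $h_{\vz^{(r)}}|_S$ and GOE rigidity prevents eigenvalues from escaping a neighbourhood of $S$ up to super-exponentially small probability) then yields
\[
\E^{{\rm GOE}(m)}\bigl[ e^{\sum_j h_{\vz^{(r)}}(\lambda_j)} \bigr] = e^{m(h_{\vz^{(r)}}, \rho_{\rm sc}) + T(\vz^{(r)}) + O(n^{-1})}.
\]
A direct semicircle computation using $\int \lambda^2 \rho_{\rm sc}(\lambda)\,d\lambda = (1+\tau)/2$ and $\int \log|\lambda - z|^2 \rho_{\rm sc}(\lambda)\,d\lambda = -Q_{\rm sc}(z)$ gives $(h_{\vz^{(r)}}, \rho_{\rm sc}) = -l/2 - \mathsf{H}_{\rm sc}(\vz^{(r)})$; the $\vz$-independent factor $e^{-ml/2}$ is absorbed into the prefactor $Z_{{\rm GOE}(m)}/Z_{m,l}$, and integrating over $\vz \in \HH^l$ produces $p_{n,m}^{\rm (upp)}$. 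The analogous argument with $\vz$ in place of $\vz^{(r)}$ on the domain $\HH^l_r$ yields $p_{n,m}^{\rm (low)}$.

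The principal obstacle is the uniformity of the $O(n^{-1})$ Szeg\H{o} error across the unbounded $\vz$-domain, since the derivative norms $\Vert \partial^j h_{\vz^{(r)}}\Vert_\infty$ entering \eqref{eq. Thm. Strong Szego, uniform.} grow as $|\vz| \to \infty$. The plan is to partition $\HH^l$ into a large compact $K$, on which all estimates are uniform, and its complement, on which the super-exponential Gaussian-type decay of $e^{-n\mathsf{H}_{22}(\vz)}$ coming from the quadratic part of $Q_{\tau,n}$ in \eqref{eq. Q_{tau,n}} swamps any polylogarithmic growth from $h_{\vz^{(r)}}$, rendering the tail contribution negligible compared with both bounds.
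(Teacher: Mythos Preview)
Your approach is essentially the paper's: rewrite $-n(\mathsf{H}_{11}+\mathsf{H}_{12})$ as the size-$m$ GOE Hamiltonian plus the linear statistic $\sum_j h_{\vz^{(r)}}(\lambda_j)$, then apply the Szeg\H{o} theorem \eqref{eq. Thm. Strong Szego, uniform.}. The paper, however, dispatches your ``principal obstacle'' in one line rather than via a compact-$K$-plus-tail decomposition, because the relevant derivative norms in fact do \emph{not} grow as $|\vz|\to\infty$. For $\lambda\in[-s,s]$ and $\im z_k^{(r)}\ge r$ one has $\bigl|\partial_\lambda^j\log|\lambda-z_k^{(r)}|^2\bigr|\le C_j/|\lambda-z_k^{(r)}|^j\le C_j/r^j$ uniformly in $\vz$, and the remaining piece $-l\lambda^2/(1+\tau)$ has derivatives on $[-s,s]$ bounded independently of $\vz$. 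Hence $\Vert\partial h_{\vz^{(r)}}\Vert_\infty$ and $\Vert\partial^6 h_{\vz^{(r)}}\Vert_\infty$ on $[-s,s]$ are bounded uniformly in $\vz$, so the Szeg\H{o} remainder is uniformly $O(n^{-1})$. This uniform control is exactly the purpose of the $(r)$-modification introduced in Lemma~\ref{lem. bounds on p_nm: 1}; your tail-decay argument would also work but is unnecessary.

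(Your side observation that $(h_{\vz^{(r)}},\rho_{\rm sc})=-l/2-\mathsf{H}_{\rm sc}(\vz^{(r)})$ produces a stray factor $e^{-ml/2}$ is correct; the paper silently omits this $\vz$-independent constant, which is harmless because it cancels when the lemma is invoked in the proof of Theorem~\ref{Thm. LDP rate function}~(i).)
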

\begin{proof}
    We have a bound for the Hamiltonian
    \begin{equation}
        - n (\mathsf{H}_{11}(\vlambda) + \mathsf{H}_{12}(\vlambda, \vz) ) \, \leq \, -\bigg( \sum_{1 \leq j < k \leq m} \log \frac{1}{|\lambda_j - \lambda_k|} + m \sum_{j=1}^m \frac{\lambda_j^2}{2(1+\tau)} \bigg) + \sum_{j=1}^m h_{\vz}(\lambda_j),
    \end{equation}
    where the equality holds for $z \in \HH_r^l$.
    Then, the lemma immediately follows from Lemma~\ref{lem. bounds on p_nm: 1} and \eqref{eq. Thm. Strong Szego, uniform.}.
    The uniform boundedness of the error terms follows from that of $\Vert \partial h_{\vz} \Vert_\infty$ and $\Vert \partial^6 h_{\vz}\Vert_\infty$ in $[-s,s]$.
\end{proof}

\subsection{Analysis on the approximated Hamiltonian} \label{subsec. Analysis on the approximated Hamiltonian}

Building on the previous subsection, we define
\begin{align} \label{eq. def of Q_n^(r)}
    Q^{(r)}_n(z) &= \frac{n}{m} Q_{\tau, n}(z) + Q_{\rm sc}(z^{(r)})
\end{align}
for a constant $r \geq 0$.

\begin{lemma}[\textbf{Global minimum of the approximated potential}] \label{lem. existence of global minimum for Q_n's}
    For a sufficiently small $r\geq 0$, the potential $Q^{(r)}_n(z)$ has a unique global minimum $z_{n}^\star$ in $\HH$ which satisfies
    \begin{equation} \label{eq. def of z star}
        z_{n}^\star = z^\star + O(n^{-1}), \qquad z^\star := i \sqrt{\frac{2 (1-\tau)^2}{3-\tau}},
    \end{equation}
    as $n \to \infty$.
    Moreover, we have
    \begin{equation}
    Q_n^{(r)}(z_{n}^\star) - Q_n^{(r)}(0) = \log \Big(\frac{3-\tau}{1+\tau}\Big) + O(n^{-1}) \label{eq. Q_n(z_n) - Q(0)}
    \end{equation}
    and
    \begin{align}
    \nabla^2 Q_n^{(r)}(z_n) = \begin{pmatrix}
            1 + O(n^{-1}) & 0 \\
            0 & \frac{1}{1-\tau} + O(n^{-1})
        \end{pmatrix}. \label{eq. nabla Q_n(z_n)}
    \end{align}
\end{lemma}
\begin{proof}
  We begin by establishing the uniqueness and existence of the global minimum.  
Throughout the proof, we  write \( z = x + i y \) with \( x, y \in \mathbb{R} \), allowing us to express
    \begin{equation} \label{eq. Q_n(z) expression}
        Q_n^{(0)}(z) = - \frac{1}{n} \log\Big[ 4y^2 \erfc\Big( \sqrt{\frac{2n}{1-\tau^2}} y \Big) \Big] + \frac{x^2-y^2}{1+\tau} - \int_\R \log\Big( (x-t)^2 + y^2 \Big) \, d\rho_{\rm sc}(t) .
    \end{equation}

    We claim that $Q_n^{(0)}(z)$ attains its minimum at $x=0$ for a given $y>0$.
    Since $Q_n^{(0)}(z)$ is an even function in $x = \re z$, it is enough to show $\frac{\partial}{\partial x} Q_n^{(0)}(z) > 0$ for $x>0$.  
    By definition, we have
    \begin{equation*}
        \frac{\partial}{\partial x} Q_n^{(0)}(z) = \frac{2x}{1+\tau} - \int_\R \frac{2(x-t)}{(x-t)^2 + y^2} \, d\rho_{\rm sc}(t).
    \end{equation*}
    Observe that as a function of the variable $x$, the integral in the right-hand side of this equation decreases for $x>\sqrt{2(1+\tau)}$.
    Therefore, it is enough to show $\frac{\partial}{\partial x} Q_n^{(0)}(z) > 0$ for $x \in [0, \sqrt{2(1+\tau)}]$.
    Note also that $\frac{\partial}{\partial x} Q_n^{(0)}(z)$ strictly decreases as $y$ decreases to $0$. Therefore, 
    \begin{align*}
    \begin{split}
        \frac{\partial}{\partial x} Q_n^{(0)}(z)  > 
         \frac{2x}{1+\tau} - \operatorname{p.v.} \int_\R \frac{2}{x-t} \, d\rho_{\rm sc}(t) =0,
    \end{split}
    \end{align*}
    where the equality follows from the variational condition (Euler-Lagrange equation) for the semicircle law.  
    This proves the claim.

    We now show that there exists a global minimum on the positive imaginary axis. 
    It follows from straightforward computations that 
    \begin{equation} \label{eq. partial F(0,y)}
        \frac{\partial}{\partial y} Q_n^{(0)}(iy) = - \frac{2}{ny} + \frac{2 c_n}{n} \frac{ e^{-c_n^2 y^2 } }{ \erfc(c_n y) } - \frac{ 2\sqrt{2(1+\tau) + y^2} }{ 1+\tau }, \qquad c_n := \sqrt{\frac{2n}{1-\tau^2}}. 
    \end{equation}
    We can easily see that $\frac{\partial}{\partial y} Q_n^{(0)}(iy) <0$ for sufficiently small $y>0$.
    Differentiating once more, we have
    \begin{equation*}
        \frac{\partial^2}{\partial y^2} Q_n^{(0)}(iy) = \frac{2}{n y^2} + \frac{ 8 (1 - \sqrt{\pi} c_n y e^{- c_n^2 y^2 } \erfc(c_n y ) ) }{  (1-\tau^2) \pi  e^{2 c_n^2 y^2} \erfc(c_n y )^2 } - \frac{ 2y }{ (1+\tau)\sqrt{2(1+\tau) + y^2} }.
    \end{equation*}
    We use inequalities for the complementary error function \cite[Eqs.(7.8.2), (7.8.5)]{NIST}
    \begin{align*}
        \sqrt{\pi} e^{t^2} \erfc(t) \leq \frac{2}{t+\sqrt{t^2 + 4/\pi}},
        \qquad
        \sqrt{\pi} t e^{t^2} \erfc(t) < \frac{2t^2+2}{2t^2 + 3}, 
    \end{align*}
    which holds for any $t \geq 0$. Substituting $t = c_ny$, we have
    \begin{align*}
    \begin{split}
        \frac{ 8(1 - \sqrt{\pi} t e^{ t^2 } \erfc(t) )}{  (1-\tau^2) \pi  e^{2t^2} \erfc(t)^2 } &> \frac{ 8 }{ (2t^2+3)(1-\tau^2) } \frac{1}{\pi  e^{2t^2} \erfc(t)^2}
       > \frac{2}{1-\tau^2}  \frac{ ( t+\sqrt{t^2 + 4/\pi} )^2 }{ 2t^2+3} .
    \end{split}
    \end{align*}
    Notice also that 
    \begin{equation*}
        \frac{ 2y }{ (1+\tau)\sqrt{2(1+\tau) + y^2} } = \frac{2}{1+\tau} \frac{t}{\sqrt{\frac{4n}{1-\tau}+t^2 }}.
    \end{equation*}
   Combining all of the above, {red}and since $n \geq 1 $ and $ \tau \in [0,1)$, we have 
    \begin{align*}
        \frac{\partial^2}{\partial y^2} Q_n^{(0)}(iy) &> \frac{2}{n} \frac{1}{y^2} + \frac{2}{1-\tau^2}  \frac{ ( t+\sqrt{t^2 + 4/\pi} )^2 }{ 2t^2+3} - \frac{2}{1+\tau} \frac{t}{\sqrt{\frac{4n}{1-\tau}+t^2 }}
        \\
        &>\frac{2}{n} \frac{1}{y^2}  + \frac{2}{1-\tau^2}  \frac{ ( t+\sqrt{t^2 + 4/\pi} )^2 }{ 2t^2+3} - \frac{2}{1+\tau} \frac{t}{\sqrt{4+t^2 }} > \frac{2}{n} \frac{1}{y^2} > 0.
    \end{align*}
    Hence, the function $Q_n^{(0)}(iy)$ is convex for $y>0$.
    Therefore, there exists a global minimum $z_n^\star = i y_n^\star$ of $Q_n^{(0)}(z)$.

    Suppose $c_n y \to \infty$ as $n \to \infty$. Recall that the error function has the power series expansion (see e.g. \cite[Eq.(7.12.1)]{NIST})
    \begin{equation} \label{eq. asymp of erfc}
        \erfc(z) = \frac{e^{-z^2}}{\sqrt{\pi} z} \sum_{j=0}^\infty (-1)^j \frac{(1/2)_{j}}{z^{2j}}, 
    \end{equation}
    which holds for $\arg(z) \leq \frac{3}{4}\pi - \delta$, and the error term has the same sign as the first neglected term if $z$ is real positive.
    Then, we have
    \begin{equation*}
        \frac{\partial}{\partial y} Q_n^{(0)}(iy) = \frac{2 y}{1-\tau^2} - \frac{ 2\sqrt{2(1+\tau) + y^2} }{ 1+\tau } + O\Big(\frac{1}{n y}\Big).
    \end{equation*}
    Since $Q_n^{(0)}(iy) $ is convex, we conclude that
    \begin{equation*}
        y_n^\star = \sqrt{\frac{2(1-\tau)^2}{3-\tau}} + O(n^{-1}).
    \end{equation*}
    Note also that the integrals from \eqref{eq. Q_n(z) expression} can be computed using \cite[Eqs. (4.241.3) and (4.295.27)]{GR14}
    \begin{align*}
        \int_0^1 \sqrt{1-x^2} \log x \, dx &= - \frac{\pi}{8} (2 \log 2 + 1),
        \\
        \int_0^1 \sqrt{1-x^2} \log (1 + a x^2) \, dx &= \frac{\pi}{2} \Big(\log \Big(\frac{1 + \sqrt{1+a}}{2}\Big) + \frac{1}{2} \frac{1-\sqrt{1+a}}{1+\sqrt{1+a}} \Big), \qquad a>0.
    \end{align*}
    Then, \eqref{eq. Q_n(z_n) - Q(0)} and \eqref{eq. nabla Q_n(z_n)} follow from straightforward computations. The proof for $r=0$ remains valid for sufficiently small $r\geq 0$. This completes the proof.
\end{proof}

\begin{figure} 
    \centering
    \begin{subfigure}{0.25\textheight}
        \begin{center}
            \includegraphics[width=\linewidth]{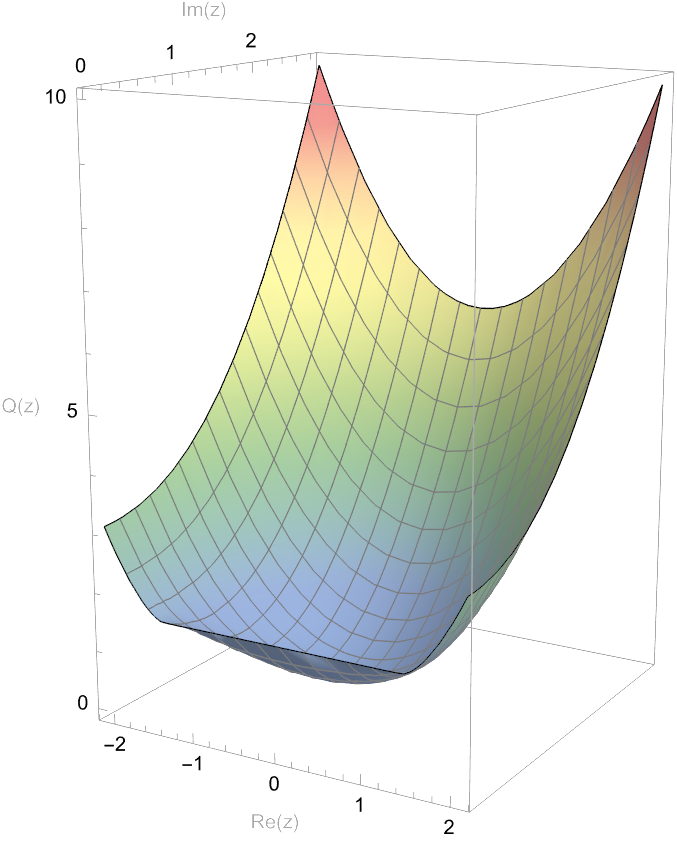}
        \end{center}
        \subcaption{$\tau=0$. }
    \end{subfigure}
    \qquad\qquad\qquad
    \begin{subfigure}{0.25\textheight}
        \begin{center}
            \includegraphics[width=\linewidth]{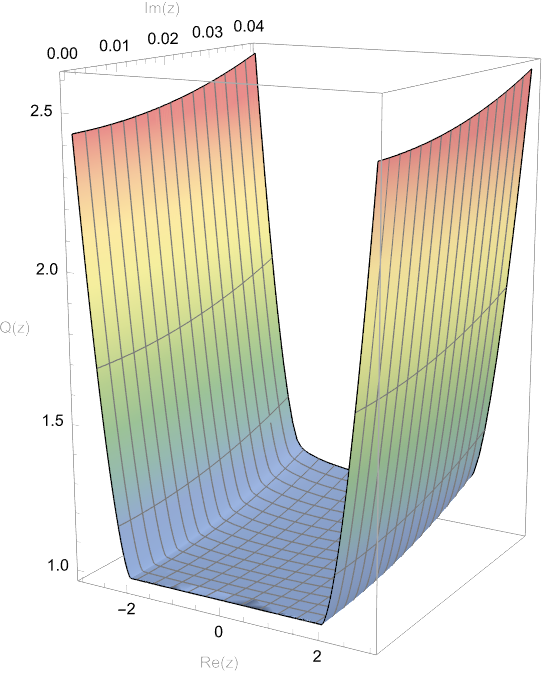} 
        \end{center}
        \subcaption{ $\tau = 1 - 1/n$ }
    \end{subfigure}
    \caption{The plots show the approximated potential $Q_n^{(r)}(z)$ on $z\in\HH$ with $r=0$, $n=100$ for $\tau = 0$ in plot (A) and $\tau = 1-1/n$ in plot (B).}
    \label{fig. 3d plot potential}
\end{figure}

\subsection{Asymptotic analysis around the global minimum}
The main contribution of the integral \eqref{eq. p_nm integral vz} arises from a neighbourhood of the global minimum \( z^\star \).  
This allows us to focus our attention on the neighbourhood.  
We formulate this statement in a more general context.

\begin{lemma}\label{lem. apprx int_H^l to int_K^l}
   Suppose a non-negative function \( p: \mathbb{C}^L \to \mathbb{R} \) has a zero set of measure zero, and a continuous function \( f: \mathbb{C} \to \mathbb{R} \cup \{\infty\} \) attains a unique global minimum at \( z_* \), with \( f(z) \to \infty \) as \( |z| \to \infty \), and
    \begin{equation}
        \int_{\C^L} p(\vz) e^{-\sum_{j=1}^L f(z_j)} d^2\vz < \infty.
    \end{equation}
    Then, for any neighbourhood $K \subset \C^L$ of $(z_*, \ldots, z_*)$, there exists an $\epsilon = \epsilon(K)>0$ such that
    \begin{equation}
        \int_{\C^L} p(\vz) e^{-N \sum_{j=1}^L f(z_j)} d^2 \vz = \int_{K^L} p(\vz) e^{-N \sum_{j=1}^L f(z_j)} d^2 \vz \ \Big(1 + O(e^{-\epsilon N})\Big),
    \end{equation}
     as $N\to\infty$.
\end{lemma}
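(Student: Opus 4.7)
The plan is a standard Laplace-type truncation: decompose $\int_{\C^L} = \int_{K^L} + \int_{\C^L \setminus K^L}$ and show that the tail is exponentially smaller than the main piece by a factor $e^{-\epsilon N}$. The integrability hypothesis at $N=1$ plays the role of keeping the total mass $\int p(\vz)\, e^{-\sum_j f(z_j)}\, d^2\vz$ finite, so that for larger $N$ we only need pointwise exponential bounds on $e^{-(N-1)\sum_j f(z_j)}$.

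For the lower bound on the main piece, I would use continuity of $f$ at $z_*$: for any chosen $\delta > 0$, there is a small $1$-dimensional disk $B$ around $z_*$ such that $B^L \subset K^L$ and $f(z) \le f(z_*) + \delta$ for every $z \in B$. Since the zero set of $p$ has Lebesgue measure zero, $c_\delta := \int_{B^L} p(\vz)\, d^2\vz > 0$, and hence
\begin{equation*}
\int_{K^L} p(\vz)\, e^{-N\sum_j f(z_j)}\, d^2\vz \;\ge\; c_\delta\, e^{-NL(f(z_*) + \delta)}.
\end{equation*}
For the upper bound on the tail, since $z_*$ is the unique global minimum of $f$ and $f(z) \to \infty$ at infinity, one has $\eta := \inf_{z \in \C \setminus B} f(z) - f(z_*) > 0$ (the infimum is attained on a compact set by coercivity of $f$). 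For any $\vz \in \C^L \setminus B^L$, at least one coordinate $z_j$ lies outside $B$, so $\sum_k f(z_k) \ge L f(z_*) + \eta$. Writing $e^{-N\sum_k f(z_k)} = e^{-(N-1)\sum_k f(z_k)} \cdot e^{-\sum_k f(z_k)}$, bounding the first factor pointwise by $e^{-(N-1)(L f(z_*) + \eta)}$, and using the $N=1$ finiteness hypothesis on the second factor, I obtain
\begin{equation*}
\int_{\C^L \setminus K^L} p(\vz)\, e^{-N\sum_j f(z_j)}\, d^2\vz \;\le\; \int_{\C^L \setminus B^L} p(\vz)\, e^{-N\sum_j f(z_j)}\, d^2\vz \;\le\; C\, e^{-(N-1)(L f(z_*) + \eta)}.
\end{equation*}

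Dividing the two estimates yields a tail-to-main ratio of order $e^{-N(\eta - L\delta) + O(1)}$. The last step is simply to choose $\delta$ small enough that $\eta - L\delta > 0$, for instance $\delta = \eta/(2L)$, which gives the stated exponential decay with $\epsilon = \eta/2$. I do not anticipate any real obstacle here: this is a textbook exponential-truncation argument. The only moving parts are the balance between $\delta$ and $\eta$ in the final step, and a single use of the measure-zero-zero-set hypothesis on $p$ to ensure $c_\delta > 0$ in the lower bound.
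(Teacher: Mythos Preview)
Your overall strategy is correct and matches the paper's Laplace-type truncation, but there is a small circularity in the parameter choice that actually bites. You pick $B$ depending on $\delta$ (so that $f \le f(z_*)+\delta$ on $B$), then set $\eta := \inf_{\C\setminus B} f - f(z_*)$, and finally want $\delta < \eta/L$. Since $\eta$ depends on $B$ and hence on $\delta$, this is circular, and in fact one always has $\eta \le \delta$ when $B$ is chosen as large as your condition allows: for $f(z)=|z|^2$ the largest admissible $B$ is the disk of radius $\sqrt{\delta}$, giving $\eta = \delta$ exactly, so $\eta - L\delta \le 0$ for every $L\ge 1$ and your last step fails as written.

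The fix is immediate: decouple the two roles of $B$. Bound the tail directly over $\C^L\setminus K^L$ using $\eta_K := \inf_{\C\setminus K} f - f(z_*) > 0$, which is fixed once $K$ is given; then choose $\delta < \eta_K/L$ and a (possibly much smaller) disk $B\subset K$ with $f\le f(z_*)+\delta$ on $B$ for the lower bound on the main piece. With this adjustment your argument goes through. The paper organises the same idea slightly differently, working with sublevel sets of $\sum_j f(z_j)$ rather than product disks: after normalising $f(z_*)=0$ it sets $A_k = \{k\epsilon \le \sum_j f(z_j) < (k+1)\epsilon\}$, chooses $\epsilon$ so small that $A_0\cup A_1 \subset K^L$, and shows the integral over $\{\sum_j f \ge 2\epsilon\}$ is $O(e^{-2\epsilon N})$ while the integral over $A_0$ is at least $C_\epsilon e^{-\epsilon N}$. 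This single-parameter formulation sidesteps the $\delta$ versus $\eta$ balancing entirely.
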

\begin{proof}
   Without loss of generality, we assume $f(z_*)=0$.
    For a constant $\epsilon>0$, define
    $$
A_k  =  \textstyle \{\vz \in \C^L: k \epsilon \leq \sum_{j=1}^L f(z_j) < (k+1) \epsilon\}, \qquad (k=0,1).
    $$
    Given a neighbourhood $K$ of $z_*$, we choose a sufficiently small $\epsilon>0$, so that $A_0 \cup A_1 \subset K$.
    Since $\sum_j f(z_j) - 2\epsilon \geq 0$ for any $\vz \in \C^L \setminus(A_0 \cup A_1)$, we have 
    \begin{align*}
    \begin{split}
        \int_{\C^L \setminus(A_0 \cup A_1)} p(\vz) e^{-N \sum_j f(z_j)} d^2\vz &= e^{-2 \epsilon N} \int_{\C^L \setminus(A_0 \cup A_1)} p(\vz) e^{-N (\sum_j f(z_j) - 2\epsilon)} d^2\vz
        \\
        &\leq e^{- 2 \epsilon N} \int_{\C^L \setminus(A_0 \cup A_1)} p(\vz) e^{-(\sum_j f(z_j) - 2 \epsilon)} d^2\vz = O(e^{-2 \epsilon N}).
    \end{split}
    \end{align*}
    Similar arguments show that
    \begin{equation*}
        \int_{A_1} p(\vz) e^{-N \sum_j f(z_j)} d^2\vz = O(e^{-\epsilon N}).
    \end{equation*}
    On the other hand, since $\sum_j f(z_j) - \epsilon \leq 0$ for any $\vz \in A_0$, we obtain
    \begin{align*}
    \begin{split}
        \int_{A_0} p(\vz) e^{-N \sum_j f(z_j)} d^2\vz &= e^{-\epsilon N} \int_{A_0} p(\vz) e^{-N (\sum_j f(z_j)-\epsilon)} d^2\vz
        \\
        &\geq e^{- \epsilon N} \int_{A_0} p(\vz) e^{-\sum_j (f(z_j)-\epsilon)} d^2\vz
        \geq C_\epsilon e^{-\epsilon N}
    \end{split}
    \end{align*}
    for some constant $C_\epsilon>0$, which implies the desired result.
\end{proof}

We investigate the asymptotic effect of the Vandermonde determinant on the integral.
While we only need the case $\gamma = 2$ in the following lemma for the proof of Theorem~\ref{Thm. LDP rate function}, we present the general statement for $\gamma > 0$, as the same proof applies without further effort.

\begin{lemma} \label{lem. asymp vandermonde integral}
    Suppose $K \subset \C$ is compact, and a smooth function $f: \C \to \R$ has a unique global minimum $z_*$ in the interior of $K$ with the non-degenerate Hessian $\nabla^2 f(z_*) = 2 \operatorname{diag}(a^2, b^2)$.
    Assume that there exists a function $g(N)$ such that
    \begin{equation}
        \int_K e^{-N f(z)} \, d^2z = g(N) e^{O(1)},
    \end{equation}
    as $N\to\infty$. Then, for any $\gamma >0$ and fixed $L \in \mathbb{N}$, we have
    \begin{equation}
        \int_{K^L}  |\Delta(\vz)|^\gamma e^{-N \sum_{j=1}^L f(z_j)} \,d^2\vz = \frac{g(N)^L}{N^{\gamma L(L-1)/4}} e^{O(1)},
    \end{equation}
    as $N\to\infty$.
\end{lemma}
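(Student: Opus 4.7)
The plan is a multi-dimensional Laplace analysis in which the Vandermonde factor is tracked explicitly through the natural rescaling at $z_*$. First I would localize the integral to a small neighborhood of $(z_*,\ldots,z_*)$ in $K^L$ by an argument parallel to the proof of Lemma~\ref{lem. apprx int_H^l to int_K^l}: since $|\Delta(\vz)|^\gamma$ is bounded on the compact set $K^L$ and $\sum_j f(z_j)$ attains its unique minimum on $K^L$ at $(z_*,\ldots,z_*)$, splitting $K^L$ into sublevel-set annuli of $\sum_j(f(z_j)-f(z_*))$ shows that for any neighborhood $U\subset K$ of $z_*$, the integral over $K^L\setminus U^L$ is smaller than the one over $U^L$ by a factor $O(e^{-\epsilon N})$ for some $\epsilon>0$. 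Thus it suffices to estimate the integral restricted to $U^L$.

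Next, choosing $U$ small enough to admit the Taylor expansion $f(z_*+w)=f(z_*)+a^2u^2+b^2v^2+R(w)$ with $R(w)=O(|w|^3)$ for $w=u+iv$, I would substitute $z_j=z_*+w_j/\sqrt{N}$ on $U^L$. The Jacobian gives $N^{-L}$ and by homogeneity $|\Delta(\vz)|^\gamma = N^{-\gamma L(L-1)/4}|\Delta(\boldsymbol{w})|^\gamma$, so the localized integral becomes
\[
\frac{e^{-NLf(z_*)}}{N^{L}\,N^{\gamma L(L-1)/4}}\int_{D_N}|\Delta(\boldsymbol{w})|^\gamma \exp\Bigl({-}\sum_{j=1}^{L}(a^2u_j^2+b^2v_j^2)-\sum_{j=1}^{L}NR(w_j/\sqrt{N})\Bigr)\,d^2\boldsymbol{w},
\]
where $D_N=\bigl(\sqrt{N}(U-z_*)\bigr)^L\uparrow\C^L$. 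Applying the same planar Laplace analysis to the hypothesis $\int_K e^{-Nf(z)}\,d^2z=g(N)e^{O(1)}$ forces $g(N)=e^{-Nf(z_*)}/N\cdot e^{O(1)}$, hence $e^{-NLf(z_*)}/N^L=g(N)^L\cdot e^{O(1)}$, and the conclusion reduces to showing that the last displayed integral is $e^{O(1)}$, that is, bounded above and below by positive constants uniformly in $N$.

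This last step is the main technical hurdle. The limiting integral
\[
I_0=\int_{\C^L}|\Delta(\boldsymbol{w})|^\gamma e^{-\sum_{j=1}^{L}(a^2u_j^2+b^2v_j^2)}\,d^2\boldsymbol{w}
\]
is a convergent Selberg-type integral (polynomial Vandermonde against Gaussian decay) and is strictly positive, so $\log I_0=O(1)$. To reduce the $N$-dependent integral to $I_0$ I would invoke dominated convergence: the non-degenerate Hessian together with compactness of $\overline{U}$ yields a uniform quadratic lower bound $f(z)-f(z_*)\ge c|z-z_*|^2$ on $U$, so after rescaling the integrand is dominated, uniformly in $N$, by the integrable function $|\Delta(\boldsymbol{w})|^\gamma e^{-c\sum_j|w_j|^2}$ on $\C^L$; pointwise convergence of the exponent follows from $NR(w/\sqrt{N})=O(|w|^3/\sqrt{N})\to 0$ for each fixed $w$. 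The only subtlety is choosing $U$ so that the quadratic bound $c|z-z_*|^2$ actually dominates the cubic remainder throughout $U$ and not merely at the origin; this is achieved by shrinking $U$ depending on the uniform $C^3$ bound of $f$ on $K$. Combined with the localization step, this yields the claimed $e^{O(1)}$ factor and proves the lemma.
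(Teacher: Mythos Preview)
Your argument is correct and takes a genuinely different route from the paper's. You carry out a direct multi-variable Laplace analysis: localize near $(z_*,\ldots,z_*)$, rescale $z_j=z_*+w_j/\sqrt{N}$, exploit the homogeneity $|\Delta(\vz)|^\gamma=N^{-\gamma L(L-1)/4}|\Delta(\boldsymbol w)|^\gamma$, and then invoke dominated convergence (via the quadratic lower bound $f(z)-f(z_*)\ge c|z-z_*|^2$ on $U$) to show that the rescaled integral tends to the finite, strictly positive Selberg-type constant $I_0$. The paper instead argues combinatorially: after normalizing to $a=b=1$ it slices $K$ into concentric annuli $A(j)=\{\,j-1\le\sqrt{N}|z|<j\,\}$, bounds the contribution of each box $A(j_1,\ldots,j_L)$ from above using the crude diameter estimate $|z_j-z_k|\le 2J/\sqrt{N}$ and from below on $A(1,\ldots,1)$ via an explicit triangular-lattice configuration that keeps $|z_j-z_k|$ uniformly away from zero, and then sums over all index tuples using the Gaussian decay in $j_s$. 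Your approach is shorter, yields an identifiable limiting constant, and uses only standard measure-theoretic tools; the paper's annulus decomposition avoids dominated convergence entirely and produces explicit, $N$-uniform two-sided bounds at every scale, which is closer in spirit to the estimates used elsewhere in Section~\ref{Sec. l finite sH}. Either method proves the lemma as stated.
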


For the special case when $\gamma=2$, the result immediately follows from the well-known evaluation of the partition function in terms of the orthogonal polynomial norms, see e.g. \cite[Section 5.3]{BF25}. For instance, we have  
\begin{align*}
    \int_{\C^L} |\Delta(\vz)|^2 e^{-N \sum_{j=1}^L |z_j|^2} d^2\vz  = N^{-\frac{L(L+1)}{2}} \pi^L \prod_{j=1}^L j!,
    \qquad 
    \int_{\C^L} e^{-N \sum_{j=1}^L |z_j|^2} d^2\vz  = N^{-L} \pi^L,
\end{align*}
from which it is evident that the absolute square of the Vandermonde determinant contributes a factor of $N^{-L(L-1)/2}$ to the value of the integral, up to a constant.

The following proof is suggested by the anonymous referee whom we greatly appreciate.
An alternative, but longer proof can be found in the first version of arXiv preprint. 
\begin{proof}[Proof of Lemma~\ref{lem. asymp vandermonde integral}]
    We may assume that $z_* = 0$ and $f(z_*)=0$, otherwise consider $f(z+z_*) - f(z_*)$ and $e^{-N f(z_*)}g(N)$ instead of $f(z)$ and $g(N)$, respectively.
    Suppose the assertion is true for $a=b=1$, and consider general $a,b>0$. For a linear map $\mathcal{L}(\vz) = \sqrt{a} \re \vz + i \sqrt{b} \im \vz$, we observe that $\Delta(\mathcal{L}(\vz)) = \Delta(\vz) e^{O(1)}$, as $N\to\infty$. Then, the conclusion directly follows by change of variables.
    Thus, for the rest of the proof, we assume $a=b=1$, which implies $f(z) = |z|^2 + O(|z|^3)$ for sufficiently small $|z|$. 
    
    Following the proof of Lemma \ref{lem. apprx int_H^l to int_K^l}, we have that for any constant $\epsilon \in (0,1/2)$, there exists a constant $c_\epsilon>0$ such that
    \begin{equation*}
        \int_{K^L} |\Delta(\vz)|^\gamma e^{-N \sum_{j=1}^L f(z_j)} d^2 \vz = \int_{B_N^L} |\Delta(\vz)|^\gamma e^{-N \sum_{j=1}^L f(z_j)} d^2\vz \ \Big(1+ O(e^{-c_\epsilon N^{-\epsilon}})\Big),
    \end{equation*}
    as $N\to\infty$, where
    \begin{equation*}
        B_N := \{ z \in K: |z| < N^{-\epsilon} \}.
    \end{equation*}
    Then, there is a constant $c>0$ such that $|z|^2 (1 - c N^{-\epsilon}) \leq f(z) \leq |z|^2 (1 + c N^{-\epsilon})$ for all $z \in B_N$.
    This implies that we have upper and lower bounds
    \begin{equation} \label{eq. proof lem vandermode inequalities}
    \begin{split}
        \mathfrak{B}_{\rm low} \leq \int_{B_N^L} |\Delta(\vz)|^\gamma e^{-N \sum_{j=1}^L f(z_j)} d^2\vz \leq \mathfrak{B}_{\rm upp},
    \end{split}
    \end{equation}
    where
    \begin{align*}
        \mathfrak{B}_{\rm low} := \int_{B_N^L} |\Delta(\vz)|^\gamma e^{- N(1 + c N^{-\epsilon})\sum_{j=1}^L |z_j|^2 } d^2\vz,
        \qquad 
        \mathfrak{B}_{\rm upp} := \int_{B_N^L} |\Delta(\vz)|^\gamma e^{- N(1 - c N^{-\epsilon})\sum_{j=1}^L |z_j|^2 }d^2\vz.
    \end{align*}
    
    By change of variables, we rewrite the upper bound in \eqref{eq. proof lem vandermode inequalities} as
    \begin{equation*}
        \mathfrak{B}_{\rm upp} = \frac{1}{(N(1-cN^{-\epsilon}))^{L+\gamma L(L-1)/4} }\int_{\widetilde{B}_N^L} |\Delta(\mathbf{w})|^\gamma e^{-\sum_{j=1}^L |w_j|^2} d^2\mathbf{w},
    \end{equation*}
    where
    \begin{equation*}
        \widetilde{B}_N := \{ w \in \C: |w| < \sqrt{N(1-c N^{-\epsilon})}N^{-\epsilon}\}.
    \end{equation*}
    Notice that $\widetilde{B}_N \nearrow \C$ as $N\to\infty$.
    Thus, the integral converges to
    \begin{equation*}
        \int_{\widetilde{B}_N^L} |\Delta(\mathbf{w})|^\gamma e^{-\sum_{j=1}^L |w_j|^2} d^2\mathbf{w} \to \int_{\C^L} |\Delta(\mathbf{w})|^\gamma e^{-\sum_{j=1}^L |w_j|^2} d^2\mathbf{w},
    \end{equation*}
    as $N\to\infty$.
    This implies that
    \begin{equation*}
        \mathfrak{B}_{\rm upp} \leq \frac{C_L}{N^{L+\gamma L(L-1)/4} }
    \end{equation*}
    for some constant $C_L>0$.
    Similarly, the lower bound has a bound
    \begin{equation*}
        \frac{c_L}{N^{L+\gamma L(L-1)/4} } \leq \mathfrak{B}_{\rm low}
    \end{equation*}
    for some constant $c_L >0$. Substituting both bounds into \eqref{eq. proof lem vandermode inequalities}, we obtain
    \begin{equation*}
        \int_{K^L} |\Delta(\vz)|^\gamma e^{-N \sum_{j=1}^L f(z_j)} d^2 \vz = \frac{1}{N^{L+\gamma L(L-1)/4} } e^{O(1)},
    \end{equation*}
    as $N\to\infty$.
    Repeating the arguments above, we have
    \begin{equation*}
        \int_K e^{-N f(z)} d^2z = \frac{1}{N} e^{O(1)},
    \end{equation*}
    as $N\to\infty$.
    Putting them together, the conclusion follows.
\end{proof}

\begin{proof}[Proof of Theorem~\ref{Thm. LDP rate function} (i)]
    We show that for a sufficiently small neighbourhood $U$ of $z^\star$, we have
    \begin{equation} \label{eq. asymp of p(n,m)}
        p_{n,m} = \frac{Z_{{\rm GOE}(m)}}{Z_{m,l}} \int_{U^l} |\Delta(\vz)|^2 e^{-m \sum_{j=1}^l (Q_{\rm sc}(z_j) + Q_\tau(z_j)) } d^2\vz \, e^{O(1)}
    \end{equation}
    as $n\to \infty$, where
    \begin{equation}
        Q_\tau(z) = \frac{(\re z)^2}{1+\tau} + \frac{(\im z)^2}{1-\tau}.
    \end{equation}
    For this, we analyse the bounds $p_{n,m}^{\rm (low)}$ and $p_{n,m}^{\rm (upp)}$ in \eqref{eq. bound of p_nm}.
    
    Note that by \eqref{eq. asymp of erfc}, we have 
    \begin{align*}
        Q_{\tau,n}(z) &= Q_\tau(z) + O(n^{-1}),
    \end{align*}
    where the error term is uniformly bounded for $\im z>r$ for any $r >0$. Moreover, $Q_{\tau, n}(z) \geq Q_\tau(z) $ for all $z \in \HH$.
    Therefore in \eqref{eq. bound of p_nm}, for the lower bound , we have
    \begin{align*}
        p_{n,m}^{\rm (low)} = \frac{Z_{{\rm GOE}(m)}}{Z_{m,l}} \int_{\HH_r^l}  e^{-m \sum_{j=1}^l (Q_{\rm sc}(z_j) + Q_\tau(z_j)) + T(\vz) + O(1) } \prod_{1 \leq j<k \leq l} |z_j-z_k|^2 |z_j - \overline{z}_k|^2 d^2\vz,
    \end{align*}
    and for the upper bound, we have
    \begin{align*}
        p_{n,m}^{\rm (upp)} - p_{n,m}^{\rm (low)} &=  \frac{Z_{{\rm GOE}(m)}}{Z_{m,l}} \int_{\HH^l \setminus \HH_r^l}  e^{-m \sum_{j=1}^l (Q_{\rm sc}(z_j) + Q_\tau(z_j)) + T(\vz^{(r)}) + O(1) } \prod_{1 \leq j<k \leq l} |z_j-z_k|^2 |z_j - \overline{z}_k|^2 d^2\vz,
    \end{align*}
    with the error terms in the integrals being uniformly bounded.
    Applying Lemma \ref{lem. apprx int_H^l to int_K^l}, we can restrict our integration to the $\epsilon$-neighbourhood $U$ of $z^\star$ by introducing an $O(e^{-\delta n})$ error. 
    For sufficiently small $\epsilon>0$ and $\vz \in U^l$, we have
    \begin{align*}
        T(\vz) = T(\vz^{(r)}) = O(1), \qquad \prod_{1 \leq j<k \leq l} |z_j - \overline{z}_k|^2 = O(1),
    \end{align*}
    because $h_{\vz}$ in \eqref{eq. def of T} is bounded and
    \begin{equation*}
        (2z^\star - 2\epsilon)^{\frac{l(l-1)}{2}} \leq \prod_{1 \leq j<k \leq l} |z_j - \overline{z}_k|^2 \leq (2z^\star + 2\epsilon)^{\frac{l(l-1)}{2}}.
    \end{equation*}
    Therefore, we obtain \eqref{eq. asymp of p(n,m)}.
    
    Putting $m=n-2$ and $l=1$ in \eqref{eq. asymp of p(n,m)}, we obtain
    \begin{align*}
        p_{n,n-2} &= \frac{Z_{{\rm GOE}(n-2)}}{Z_{n-2,1}} g(n-2),
    \end{align*}
    where
    \begin{equation*}
        g(n) = \int_{U}  e^{-n (Q_{\rm sc}(z) + Q_\tau(z))} d^2z \, e^{O(1)}.
    \end{equation*}
    By Lemma \ref{lem. asymp vandermonde integral}, for general $m = n- 2l$ with constant $l$, we have
    \begin{align*}
        p_{n,n-2l} &= \frac{Z_{{\rm GOE}(m)}}{Z_{m,l}} \int_{U^l}  |\Delta(\vz)|^2 e^{-m \sum_{j=1}^l (Q_{\rm sc}(z_j) + Q_\tau(z_j))} d^2\vz
        \\
        &= \frac{Z_{{\rm GOE}(m)}}{Z_{m,l}} \frac{g(m)^l}{n^{l(l-1)/2}} e^{O(1)} = \frac{Z_{{\rm GOE}(m)}}{Z_{m,l}} \frac{1}{n^{l(l-1)/2}} \Big( \frac{Z_{m,1} \,  p_{m+2,m} }{Z_{{\rm GOE}(m)}} \Big)^l \, e^{O(1)}.
    \end{align*}
    Substituting \eqref{eq. LDP sH l=1}, \eqref{eq. def of widetilde Z_ml}, and \eqref{eq. def of Z_GOE(m)}, we obtain the desired result.
\end{proof}

\section{General case: weak non-Hermiticity} \label{Sec. l finite wH}

In this section, we examine the asymptotic behaviour of the probability \( p_{n,m} \) for \( \tau = 1 - \alpha^2 / n \). Using the Pfaffian integral representation \eqref{eq. p(n,m) pfaffian integral representation} of \( p_{n,m} \), we establish that the dominant contributions to the integral originate from regions close to the real axis. The analysis proceeds as follows.

\begin{itemize}
    \item We begin by exploring an alternative representation of the prekernel \( \kappa_{n}(\zeta, \eta) \) via a summation formula. This representation is pivotal for our subsequent calculations and is presented in Lemma~\ref{lem. Alt rep of prekernel}.
    \smallskip
    \item Next, we examine the uniform asymptotic behaviour of the Hermite functions, as detailed in Lemma~\ref{lem. P-R for the Hermite function}, to derive asymptotic formulas for the prekernel in various regions. Specifically, we analyse the behaviour of the prekernel near \(  \mathbb{R} \subset \mathbb{C} \) in Lemmas~\ref{lem. asymp prekernel kappa(z,z)} and \ref{lem. asymp prekernel kappa(z,w)}, and in the exterior region in Lemma~\ref{lem. asymp prekernel kappa(z,w) far away}.
    \smallskip    
    \item Finally, employing the asymptotics of the prekernel, we prove Theorem~\ref{Thm. LDP rate function} for the weakly non-Hermitian regime.
\end{itemize}

\subsection{Alternative representations of the prekernel}

To analyse the prekernel \eqref{eq. def of prekernel kappa}, it is convenient to use the Hermite function
\begin{equation}
    \psi_N(z) = \frac{1}{2^N c_N} H_{N}(z) e^{-\frac{z^2}{2}}, \qquad c_N = \pi^\frac{1}{4} 2^{-\frac{N}{2}} \sqrt{N!}. 
\end{equation} 

\begin{lemma}[\textbf{Alternative representations of the GOE prekernel}] \label{lem. Alt rep of prekernel}
    For a positive even integer $n$, we have
    \begin{align}
    \begin{split} \label{eq. rational rep prekernel}
        \kappa_{n}(\zeta, \eta) = - \frac{\sqrt{n}}{2\sqrt{2}} &\Bigg[ \frac{\Psi_{n,1}(\zeta,\eta)}{\zeta-\eta} - \frac{\Psi_{n,2}(\zeta,\eta)}{(\zeta-\eta)^2} \Bigg],
    \end{split}
    \end{align}
    where 
    \begin{align}
    \begin{split}
        \Psi_{n,1}(\zeta,\eta) &= \sqrt{2n} \psi_{n-1}(\zeta) \psi_{n-1}(\eta) - \sqrt{2(n-1)} \psi_{n-2}(\zeta) \psi_{n}(\eta) - \eta \psi_n(\zeta) \psi_{n-1}(\eta) + \zeta \psi_{n-1}(\zeta) \psi_n(\eta),
        \\
        \Psi_{n,2}(\zeta,\eta) &= \psi_n(\zeta) \psi_{n-1}(\eta) - \psi_{n-1}(\zeta) \psi_n(\eta).
    \end{split}
    \end{align}
    Equivalently, we have
    \begin{equation} \label{eq. integral rep prekernel}
        \kappa_{n}(\zeta,\eta) = - \frac{\sqrt{n}}{2\sqrt{2}} \Bigg[ \psi_{n-1}(\eta) \int_0^1 t \, \widetilde{\psi}_n(s_t) \, dt - \psi_n(\eta) \int_0^1 t \, \widetilde{\psi}_{n-1}(s_t) \, dt  + \psi_n(\zeta) \psi_{n-1}(\eta) \Bigg],
    \end{equation}
    where $s_t = t \zeta + (1-t) \eta$ and
    \begin{align}\label{eq. def widetilde psi_(n,1)}
        \widetilde{\psi}_n(s) &= (s^2 - 1) \psi_n(s) - 2 \sqrt{2n} s \psi_{n-1}(s) + 2 \sqrt{n(n-1)} \psi_{n-2}(s). 
    \end{align}
\end{lemma}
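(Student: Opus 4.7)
The plan is to establish the two identities in sequence: first prove the closed form \eqref{eq. rational rep prekernel} by a direct evaluation of the sum defining $\kappa_n$, then deduce the integral form \eqref{eq. integral rep prekernel} from it via an integration-by-parts identity, after observing that $\widetilde{\psi}_n$ equals the second derivative $\psi_n''$. Throughout, I would rely on the Hermite raising/lowering relation $\psi_k'(z) = \sqrt{2k}\,\psi_{k-1}(z) - z\psi_k(z)$, the three-term recurrence $z\psi_k(z) = \sqrt{(k+1)/2}\,\psi_{k+1}(z) + \sqrt{k/2}\,\psi_{k-1}(z)$, and the Christoffel--Darboux identity $(\zeta - \eta)\sum_{k=0}^{N-1}\psi_k(\zeta)\psi_k(\eta) = \sqrt{N/2}\,\bigl[\psi_N(\zeta)\psi_{N-1}(\eta) - \psi_{N-1}(\zeta)\psi_N(\eta)\bigr]$.

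For \eqref{eq. rational rep prekernel}, the first step is to substitute the definitions of $q_{2j}$ and $q_{2j+1}$ into \eqref{eq. def of prekernel kappa}, noting that the $H_{2j}(\zeta)H_{2j}(\eta)$ diagonal piece cancels by antisymmetry so that each summand reduces to a Christoffel--Darboux-type Wronskian in Hermite polynomials. After passing to the Hermite functions $\psi_k$, the Christoffel--Darboux identity converts each such Wronskian into $(\zeta - \eta)$ times a partial sum of diagonal bilinears $\psi_k(\zeta)\psi_k(\eta)$. Interchanging the order of summation in $j$ and $k$ reweights these bilinears with integer coefficients depending on parity, and a summation by parts (using the three-term recurrence to re-express $(\zeta - \eta)\psi_k(\zeta)\psi_k(\eta)$ as an increment of adjacent Wronskians) then cancels all interior contributions, leaving only the boundary terms at level $n$. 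Matching these boundary contributions against $-\frac{\sqrt{n}}{2\sqrt{2}}\bigl[\Psi_{n,1}/(\zeta-\eta) - \Psi_{n,2}/(\zeta-\eta)^2\bigr]$ completes the identity; the apparent poles at $\zeta = \eta$ cancel automatically because the left-hand side is polynomial in $\zeta, \eta$ up to the Gaussian factor.

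For \eqref{eq. integral rep prekernel}, the key observation is that $\widetilde{\psi}_n(s) = \psi_n''(s)$: applying the lowering relation twice yields directly $\psi_n''(s) = 2\sqrt{n(n-1)}\,\psi_{n-2}(s) - 2\sqrt{2n}\,s\,\psi_{n-1}(s) + (s^2 - 1)\psi_n(s)$, which is exactly \eqref{eq. def widetilde psi_(n,1)}. Next, with $s_t = t\zeta + (1-t)\eta$ so that $\frac{d}{dt}\psi_k(s_t) = (\zeta - \eta)\psi_k'(s_t)$, two successive integrations by parts give
\begin{equation*}
\int_0^1 t\,\psi_n''(s_t)\,dt = \frac{\psi_n'(\zeta)}{\zeta - \eta} - \frac{\psi_n(\zeta) - \psi_n(\eta)}{(\zeta - \eta)^2},
\end{equation*}
and the analogous identity holds with $n$ replaced by $n-1$. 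Substituting into \eqref{eq. integral rep prekernel}, the $1/(\zeta-\eta)^2$ contributions assemble into $-\Psi_{n,2}/(\zeta-\eta)^2$. For the $1/(\zeta-\eta)$ contributions, combining them with the remainder term $\psi_n(\zeta)\psi_{n-1}(\eta)$ rewritten as $(\zeta-\eta)\psi_n(\zeta)\psi_{n-1}(\eta)/(\zeta-\eta)$, and expanding $\psi_n'(\zeta)$ and $\psi_{n-1}'(\zeta)$ via the lowering relation, recovers exactly $\Psi_{n,1}/(\zeta-\eta)$.

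The main obstacle is the summation-by-parts step in the rational representation: one must carefully manage the parity structure of the $q$-polynomials so that the telescoping in $j$ collapses to exactly the boundary combination appearing in $\Psi_{n,1}$ with the correct prefactor $-\sqrt{n}/(2\sqrt{2})$, and so that the mixed Wronskian pieces at adjacent levels pair up properly. The manipulations for the integral representation--the identity $\widetilde{\psi}_n = \psi_n''$ and the double integration by parts--are short algebraic verifications by comparison, but they are the crucial observations that make the integral form tractable for the subsequent asymptotic analysis.
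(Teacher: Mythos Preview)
Your argument for the integral representation \eqref{eq. integral rep prekernel} is essentially the same as the paper's: both rest on the identification $\widetilde{\psi}_n = \psi_n''$ and an integration-by-parts identity linking $\int_0^1 t\,\psi_n''(s_t)\,dt$ to the rational pieces $\psi_n'(\zeta)/(\zeta-\eta)$ and $[\psi_n(\zeta)-\psi_n(\eta)]/(\zeta-\eta)^2$. The paper organizes this slightly differently (it starts from the derivative-of-Christoffel--Darboux form and writes the CD quotient as $\int_0^1 \psi_n'(s_t)\,dt$ before differentiating in $\zeta$), but the content is the same.

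The genuine difference is in the rational representation \eqref{eq. rational rep prekernel}. The paper does not carry out your proposed from-scratch derivation via Christoffel--Darboux plus summation by parts; instead it invokes the known closed form for the GOE skew-orthogonal sum, namely that $\sum_{j=0}^{n/2-1} h_j^{-1}[q_{2j+1}(\zeta)q_{2j}(\eta)-q_{2j}(\zeta)q_{2j+1}(\eta)]$ equals $-e^{(\zeta^2+\eta^2)/2}\,(c_n/c_{n-1})\bigl[\partial_\zeta\bigl(\Psi_{n,2}/(\zeta-\eta)\bigr)+\psi_n(\zeta)\psi_{n-1}(\eta)\bigr]$ (citing Borodin--Strahov), and then simply expands the $\zeta$-derivative with the lowering relation $\psi_N'=\sqrt{2N}\,\psi_{N-1}-\zeta\psi_N$ to read off $\Psi_{n,1}$. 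Your route is more self-contained and would reproduce this summation identity as a by-product, at the cost of the bookkeeping you flag (the parity-dependent reweighting and the telescoping have to be tracked carefully to land on the right boundary combination with the correct prefactor $-\sqrt{n}/(2\sqrt{2})$). The paper's route is a two-line application of a cited formula. Both are valid; yours trades an external reference for a longer but elementary computation.
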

\begin{proof}
Recall that the GOE skew-orthogonal polynomials $q_j(\zeta)$ are given in \eqref{eq. skew-ortho polys}. Then, it is well known that 
\begin{align} \label{eq. sum formula for GOE skew OP}
\begin{split}
    &\sum_{j=0}^{n/2-1} \frac{q_{2j+1}(\zeta) q_{2j}(\eta) - q_{2j}(\zeta) q_{2j+1}(\eta)}{h_j} 
    \\
    & = - e^{\frac{\zeta^2 + \eta^2}{2}} \frac{c_{n}}{c_{n-1}} \Bigg[ \frac{d}{d\zeta}\Big(\frac{\psi_{n}(\zeta) \psi_{n-1}(\eta) - \psi_{n-1}(\zeta) \psi_{n}(\eta)}{\zeta-\eta} \Big) + \psi_{n}(\zeta) \psi_{n-1}(\eta)\Bigg],
\end{split}
\end{align}
see e.g. \cite[Eq.(5.2.1)]{BS06} and references therein.
Then, the rational expression \eqref{eq. rational rep prekernel} is a direct consequence of \eqref{eq. def of prekernel kappa}, \eqref{eq. sum formula for GOE skew OP}, and the recurrence relation
    \begin{align} \label{eq. derivative of the Hermite func.}
        \psi_N'(\zeta) = \sqrt{2N} \psi_{N-1}(\zeta) - \zeta \psi_N(\zeta)
    \end{align}
    for any positive $N$. 
    
To show \eqref{eq. integral rep prekernel}, we rewrite
    \begin{align*}
        \frac{\psi_n(\zeta) \psi_{n-1}(\eta) - \psi_{n-1}(\zeta) \psi_n(\eta)}{\zeta-\eta} &= \frac{\psi_n(\zeta) - \psi_n(\eta)}{\zeta - \eta} \psi_{n-1}(\eta) - \frac{\psi_{n-1}(\zeta) - \psi_{n-1}(\eta)}{\zeta - \eta} \psi_n(\eta)
        \\
        &= \psi_{n-1}(\eta) \int_0^1 \psi_n'(t \zeta + (1-t) \eta) \,dt - \psi_n(\eta) \int_0^1 \psi_{n-1}'(t \zeta + (1-t) \eta) \,dt,
    \end{align*}
    which leads to 
    \begin{align*}
    &\quad \frac{d}{d\zeta}\Big( \frac{\psi_n(\zeta) \psi_{n-1}(\eta) - \psi_{n-1}(\zeta) \psi_n(\eta)}{\zeta-\eta} \Big) 
     \\
     & = \psi_{n-1}(\eta) \int_0^1 t \psi_n''(t \zeta + (1-t) \eta) \,dt - \psi_n(\eta) \int_0^1 t \psi_{n-1}''(t \zeta + (1-t) \eta) \,dt.
    \end{align*} 
    Then, the conclusion follows from \eqref{eq. derivative of the Hermite func.} by writing
    \begin{align*}
    \begin{split}
        \psi_N''(\zeta) &= \sqrt{2N} \psi_{N-1}'(\zeta) - \psi_N(\zeta) - \zeta \psi_N'(\zeta)
        \\
        &= (\zeta^2 - 1) \psi_N(\zeta) - 2 \sqrt{2N} \zeta \psi_{N-1}(\zeta) + 2 \sqrt{N (N-1)} \psi_{N-2}(\zeta) = \widetilde{\psi}_N(\zeta).
    \end{split}
    \end{align*}
\end{proof}

\subsection{Plancherel-Rotach formula in the complex plane}

The asymptotic behaviour of orthogonal polynomials in the complex plane has been extensively studied in the literature, including the seminal work \cite{DKMVZ99}. This behaviour depends on the location within the complex plane.
To illustrate this, we write 
\begin{equation}
  {\rm I}  = \{ z \in \C: |\re z| \leq 1, |\im z| < \delta, |z - 1| \geq \delta/\sqrt{2}, |z + 1| \geq \delta \},
\end{equation}
and 
\begin{equation}
    {\rm II}^{\pm}  = \{ z \in \C: |z \mp 1| \leq \delta \},
    \qquad 
    {\rm III} = \C \setminus ({\rm I} \cup {\rm II}^+ \cup {\rm II}^-),  
\end{equation}
for some sufficiently small $\delta>0$, see Fig. \ref{fig:enter-label}. 

\begin{figure}
    \centering
        \begin{tikzpicture}
            \def \tr {-1}
            \def \sc {1.25}
            \def \he {0.5} 
            \begin{scope}[scale = 3.5]
                \draw[thick,->] (-1.5,0) -- (1.5, 0);
                \fill[nearly transparent, gray] (-1.5,0) rectangle (1.5,0.1);
                \draw[dotted] (-1.5,0) rectangle (1.5,0.1);
                \draw[|<->|] (-1.4,0) -- (-1.4,0.1) node[above] {$n^{\epsilon_i-1}$};

                \fill[nearly transparent, red] (-1.05,0) rectangle (-0.95,0.1);
                \fill[nearly transparent, red] (1.05,0) rectangle (0.95,0.1);

                \begin{scope}
                    \clip (-1.5,0) rectangle (1.5,0.7);
                    \draw (1,0) circle (0.3);
                    \clip (-1.5,0) rectangle (1.5,0.7);
                    \draw (-1,0) circle (0.3);
                \end{scope}
                \draw[black] ({-1+0.3*sqrt(2)/2},{0.3*sqrt(2)/2}) -- ({1-0.3*sqrt(2)/2}, {0.3*sqrt(2)/2});

                \fill[black] (-1, 0) circle (.5pt) node[below] {-1};
                \fill[black] (1, 0) circle (.5pt) node[below] {1};
                
                \node at (0, 0.15) {${\rm I}$};
                \node at ({1+0.03},{0.15+0.01}) {${\rm II}^+$};
                \node at ({-1+0.03},{0.15+0.01}) {${\rm II}^-$};
                \node at (0, 0.5) {${\rm III}$};

                \begin{scope}
                    \draw[dashed] (-1.5,0) -- ({-\sc*1.5}, {0 + \tr});
                    \draw[dashed] (1.5,0) -- ({\sc*1.5}, {0 + \tr});
                    \draw[dashed] (1.5,0.1) -- ({\sc*1.5}, {\sc*\he + \tr});
                    \draw[dashed] (-1.5,0.1) -- ({-\sc*1.5}, {\sc*\he + \tr});
                \end{scope}
                \begin{scope}[shift={(0,\tr)}, scale=\sc]
                    \draw[thick,->] (-1.5,0) -- (1.5,0);
                    
                    \fill[nearly transparent, gray] (-1.5,0) rectangle (1.5,\he);
                    \draw[dotted, thick] (-1.5,0) rectangle (1.5,\he);
                    
                    \draw[dashed, red] (1,0) -- (1,{\he/2-0.05});
                    \draw[dashed, red] (1,{\he/2+0.05}) -- (1,\he);
                    \draw[dashed, red] (-1,0) -- (-1,{\he/2-0.05});
                    \draw[dashed, red] (-1,{\he/2+0.05}) -- (-1,\he);
                    \fill[red,opacity=0.2] (-1.15,\he) rectangle (-0.85,0);
                    \fill[red,opacity=0.2] (1.15,\he) rectangle (0.85,0);
                    \draw[|<->|] (-1,0.05) -- (-0.85,0.05) node[above] {$n^{\epsilon_c-\frac{2}{3}}$};
                    
                    \fill[black] (-1, 0) circle (.5pt) node[below] {-1};
                    \fill[black] (1, 0) circle (.5pt) node[below] {1};
            
                    \node at (0, {\he/2}) {${\rm I}_n$};
                    \node[red] at (-1, {\he/2}) {${\rm II}_n^-$};
                    \node[red] at (1, {\he/2}) {${\rm II}_n^+$};
                    \node at (-1.3, {\he/2}) {${\rm III}_n$};
                    \node at (1.3, {\he/2}) {${\rm III}_n$};
                \end{scope}
            \end{scope}
        \end{tikzpicture}
    \caption{Illustration of the regions $\rm I, II^{\pm}, III$ and ${\rm I}_n, {\rm II}_n^\pm, {\rm III}_n$  in the upper half plane.}
    \label{fig:enter-label}
\end{figure}
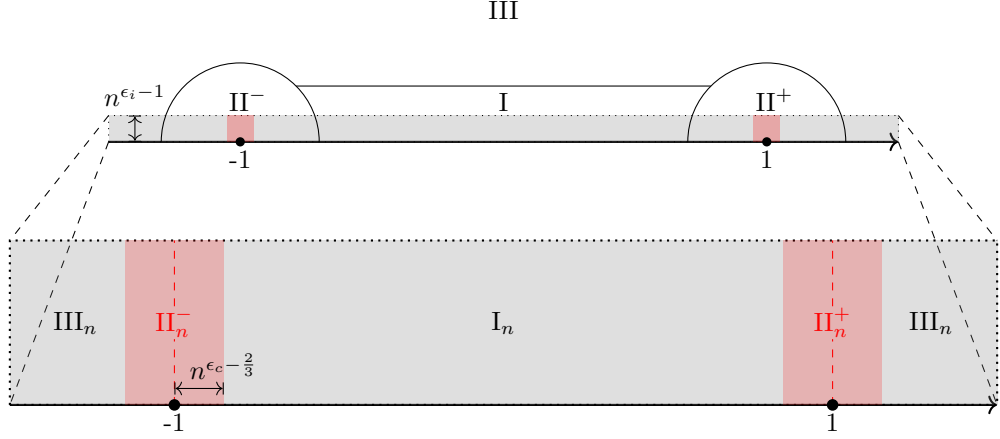

The asymptotic behaviour of the Hermite polynomials can be derived as a special case of \cite[Theorem 2.2]{DKMVZ99}. To present these results, we introduce some notation adapted to our purpose.
Let us define functions $\phi^\pm: \C_{\rm cut}=\C \setminus ((-\infty,-1] \cup [1, \infty)) \to \C$ by
\begin{equation}
    \phi^\pm(z) = 2 \int_{\pm 1}^z \sqrt{1-w^2} \, dw,
\end{equation}
where the integrand is defined on $\C_{\rm cut}$ as the analytic extension of the semicircle density on $(-1,1)$ to $\C_{\rm cut}$.
We also define $f_N^{\pm}$ in small neighbourhoods of $z = \pm 1$ in $\C_{\rm cut}$, respectively, satisfying
\begin{align} \label{eq. def of f_n}
    (- f_N^+(z))^{\frac{3}{2}} = - N \frac{3}{2} \phi^+(z),
    \qquad f_N^-(z)^{\frac{3}{2}} = N \frac{3}{2} \phi^-(z).
\end{align}
Then, there exist analytic functions $\hat{f}^\pm(z)$ in small neighbourhoods of $\pm1$ in $\C$, respectively, such that $\hat{f}^+(1) = \hat{f}^-(-1) = 1$ and
\begin{align} \label{eq. def of f hat}
    f_N^+(z) = 2 N^{\frac{2}{3}} (z-1) \hat{f}^+(z), \qquad   f_N^-(z) = 2 N^{\frac{2}{3}} (z+1) \hat{f}^-(z),
\end{align}
see \cite[Eq.(4.10)]{DG07}. For simplicity, we shall often omit the superscripts $+$ and write $\phi$, $f_N$, and $\hat{f}$ in place of $\phi^+$, $f_N^+$, and $\hat{f}^+$, respectively.
For an integer $M$, we denote
\begin{equation}
    \theta_M(z) = \Big(M-\frac{1}{2}\Big) \arccos(z) - \frac{\pi}{4}.
\end{equation}
Note that $\phi(z)$ is also expressed as
\begin{align}
    \phi(z) = z \sqrt{1-z^2} - \arccos(z),
    \qquad 
    i \phi(z) = z \sqrt{z^2-1} - \log(z + \sqrt{z^2-1}).
\end{align}
Recall also that the Airy function is defined by 
\begin{equation}
    \Airy(x)= \frac{1}{\pi} \int_0^\infty \cos\Big( \frac{t^3}{3}+xt \Big)\,dt
\end{equation}
on the real line, and by analytic continuation elsewhere.

Using the notation above, it follows from \cite[Theorem 2.2]{DKMVZ99} that there exists a sufficiently small \(\delta_0 > 0\) such that, for any \(\delta \in (0, \delta_0]\), the following holds.
\begin{itemize}
    \item For $z \in {\rm I}$, we have
\begin{align}
\begin{split} \label{eq. DKMVZ99 Plancherel-Rotach osc}
    \psi_{N}(\sqrt{2N} z)& = 2^{\frac{1}{4}} \pi^{-\frac{1}{2}} N^{-\frac{1}{4}} (1-z^2)^{-\frac{1}{4}} 
    \\
    &\quad \times \bigg[ \cos(N\phi(z) - \theta_0(z)) \Big(1 + O(N^{-1})\Big) + \sin(N\phi(z) - \theta_0(z)) O(N^{-1}) \bigg].
\end{split}
\end{align}
\item For $z \in {\rm II}^+$, we have
\begin{align}
\begin{split} \label{eq. DKMVZ99 Plancherel-Rotach airy}
    \psi_{N}(\sqrt{2N} z) = 2^{-\frac{1}{4}} N^{-\frac{1}{4}} &\bigg[ \frac{(z+1)^{\frac{1}{4}}}{(z-1)^{\frac{1}{4}}} f_N(z)^{\frac{1}{4}} \Airy(f_N(z)) \Big( 1 + O(N^{-1}) \Big)
    \\
    &\quad - \frac{(z-1)^{\frac{1}{4}}}{(z+1)^{\frac{1}{4}}} f_N(z)^{-\frac{1}{4}} \Airy'(f_N(z)) \Big( 1 + O(N^{-1}) \Big) \bigg]. 
\end{split}
\end{align}
An asymptotic result analogous to \eqref{eq. DKMVZ99 Plancherel-Rotach airy} holds for \( z \in \mathrm{II}^- \).
\item For $z \in {\rm III}$, we have
\begin{equation} \label{eq. DKMVZ99 Plancherel-Rotach exp}
    \psi_{N}(\sqrt{2N}z) = 2^{-\frac{3}{4}} \pi^{-\frac{1}{2}} N^{-\frac{1}{4}} e^{- N i \phi(z)} \bigg( \frac{(z-1)^\frac{1}{4}}{(z+1)^\frac{1}{4}} + \frac{(z+1)^\frac{1}{4}}{(z-1)^\frac{1}{4}} \bigg) \Big( 1 + O(N^{-1}) \Big).
\end{equation}
\end{itemize}
In the above formulas, all error terms are uniform within the specified region.

For our purposes, we require not only the asymptotics of \(\psi_{N}(\sqrt{2N}z)\), but also those of \(\psi_{N+M}(\sqrt{2N}z)\) for \(M = 0, -1, -2\).  
To simplify and adapt them for our needs, we write 
\begin{align}
    \Omega_n &= \{ z \in \C: |\im z| < n^{\epsilon_i - 1} \},
    \\
    {\rm I}_n &= \{ z \in \Omega_n: |\re z| \leq 1 - \delta n^{\epsilon_c-\frac{2}{3}}\},
    \\
    {\rm II}_n^{\pm} &= \{ z \in \Omega_n: |\re z \mp 1| \leq \delta n^{\epsilon_c-\frac{2}{3}}\},
    \\
    {\rm III}_n &= \{ z \in \Omega_n: |\re z | \geq 1 + \delta n^{\epsilon_c-\frac{2}{3}}\}
\end{align}
for some constants $0<\epsilon_i < \epsilon_c < \tfrac{2}{3}$ and $\delta>0$.

\begin{lemma}[\textbf{Plancherel-Rotach formula for the Hermite function}] \label{lem. P-R for the Hermite function}
    Let $m$ be an integer. Then, the following asymptotic formulas hold.
    \begin{enumerate}
        \item For $z\in {\rm I}_n$, 
        \begin{align}
        \begin{split} \label{eq. Plancherel-Rotach osc near R}
            \psi_{n+m}(\sqrt{2n} z) = 2^\frac{1}{4} \pi^{-\frac{1}{2}} n^{-\frac{1}{4}} (1-z^2)^{-\frac{1}{4}} \bigg[ &\cos(n \phi(z) - \theta_m(z) ) \Big( 1 + O( n^{-1}(1-z^2)^{-\frac{3}{2}} ) \Big)
            \\
            & \quad + \sin(n \phi(z) - \theta_m(z) ) \, O( n^{-1}(1-z^2)^{-\frac{3}{2}} ) \bigg];
        \end{split}
        \end{align}
        \item For $z\in {\rm II}_n^+$,  
        \begin{align}
        \begin{split} \label{eq. Plancherel-Rotach Airy near R}
            \psi_{n+m}(\sqrt{2n} z) = 2^\frac{1}{4} n^{-\frac{1}{12}} \bigg[ & \Airy(2 n^\frac{2}{3}(z-1)) \Big(1 + O(z-1) + O(n^{-1}) \Big) 
            \\
            & \quad - (m+\tfrac{1}{2}) \Airy'(2n^\frac{2}{3}(z-1)) n^{-\frac{1}{3}} \Big( 1 + O(z-1) + O(n^{-1}) \Big) \bigg]. 
        \end{split}
        \end{align}
        \item For $z\in {\rm III}_n$,
        \begin{align}
        \begin{split} \label{eq. Plancherel-Rotach exp near R}
            \psi_{n+m}(\sqrt{2n} z) &= 2^{-\frac{3}{4}} \pi^{-\frac{1}{2}} n^{-\frac{1}{4}} e^{-(n+m) i \phi(z) + m i z \phi'(z)/2}
            \\
            &\quad \times \bigg( \frac{(z-1)^\frac{1}{4}}{(z+1)^\frac{1}{4}} + \frac{(z+1)^\frac{1}{4}}{(z-1)^\frac{1}{4}} \bigg) \Big(1 + O(n^{-1}) + O(n^{-1}(z^2-1)^{-\frac{3}{2}})\Big). 
        \end{split}
        \end{align}
    \end{enumerate}
    Here, all error terms are uniformly bounded.
\end{lemma}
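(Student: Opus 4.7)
The plan is to reduce to the known DKMVZ99 asymptotics \eqref{eq. DKMVZ99 Plancherel-Rotach osc}--\eqref{eq. DKMVZ99 Plancherel-Rotach exp} via the substitution $N = n + m$ and $z' := \sqrt{n/N}\,z$, which gives the identity $\psi_{n+m}(\sqrt{2n}\,z) = \psi_N(\sqrt{2N}\,z')$. From the expansion $z' = z - mz/(2N) + O(n^{-2})$ one verifies that, for sufficiently small $\delta>0$ and large $n$, the inclusions $z \in {\rm I}_n$, ${\rm II}_n^{\pm}$, or ${\rm III}_n$ imply $z' \in {\rm I}$, ${\rm II}^{\pm}$, or ${\rm III}$, respectively, so that the appropriate DKMVZ99 formula applies to $\psi_N(\sqrt{2N}\,z')$. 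It then remains to Taylor expand the constituent functions $\phi$, $\arccos$, $f_N$, $\hat{f}$, $\Airy$ in the small parameter $m/n$ (and, for part (2), in $z-1$) to bring the result into the form stated in the lemma.

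For part (1), I would apply \eqref{eq. DKMVZ99 Plancherel-Rotach osc}. Using $\phi'(z) = 2\sqrt{1-z^2}$ and $\phi(z) = z\sqrt{1-z^2} - \arccos(z)$, a direct computation gives
\begin{align*}
N\phi(z') - \theta_0(z') &= n\phi(z) - (m - \tfrac{1}{2})\arccos(z) + \tfrac{\pi}{4} + O\bigl(n^{-1}(1-z^2)^{-1/2}\bigr) \\
&= n\phi(z) - \theta_m(z) + O\bigl(n^{-1}(1-z^2)^{-1/2}\bigr),
\end{align*}
while the amplitude satisfies $(1-z'^2)^{-1/4} = (1-z^2)^{-1/4}\bigl(1 + O(n^{-1}(1-z^2)^{-1})\bigr)$. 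Linearising $\cos(N\phi(z') - \theta_0(z'))$ around $\cos(n\phi(z) - \theta_m(z))$ yields the $\sin(\cdot)$ correction; all errors then combine into the uniform bound $O(n^{-1}(1-z^2)^{-3/2})$.

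For part (2), I would apply \eqref{eq. DKMVZ99 Plancherel-Rotach airy}. The critical step is to compute the shift in the Airy argument: using $\hat{f}(1) = 1$ together with the first-order expansions of $\hat{f}(z')$, $z'-1$, and $N^{2/3}$, I expect to obtain
\[ f_N(z') = 2n^{2/3}(z-1) - m n^{-1/3} + O\bigl(n^{-1/3}(z-1)\bigr) + O(n^{-1}). \]
The Taylor expansion $\Airy(f_N(z')) = \Airy(2n^{2/3}(z-1)) - mn^{-1/3}\Airy'(2n^{2/3}(z-1)) + \cdots$ contributes the term $-mn^{-1/3}\Airy'$, while the subleading Airy' term in \eqref{eq. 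DKMVZ99 Plancherel-Rotach airy}, after accounting for the prefactors $(z'-1)^{1/4}/(z'+1)^{1/4}$ and $f_N(z')^{-1/4}$, contributes an additional $-\tfrac{1}{2}n^{-1/3}\Airy'$, yielding the combined coefficient $-(m + \tfrac{1}{2}) n^{-1/3}\Airy'$ as claimed. For part (3), I would apply \eqref{eq. DKMVZ99 Plancherel-Rotach exp}; the Taylor expansion of the phase gives
\[ -N i\phi(z') = -(n+m)i\phi(z) + m i z \phi'(z)/2 + O\bigl(n^{-1}(z^2-1)^{-1/2}\bigr), \]
and combining this with the analogous expansion of the amplitude produces the stated formula.

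The main technical obstacle is ensuring uniform error control across each region, especially where $1 - z^2$ or $z \mp 1$ becomes small. For part (2), the nontrivial algebraic task is verifying that the Airy-argument shift and the DKMVZ99 subleading term conspire to produce precisely the coefficient $-(m + \tfrac{1}{2})$; outside this, the bookkeeping is routine but must be carried out carefully so as not to lose powers of $(1-z^2)^{\pm 1/2}$ or $(z-1)^{\pm 1/4}$ in the final error bounds.
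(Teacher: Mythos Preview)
Your overall strategy---reduce to the DKMVZ99 formulas via the substitution $z' = \sqrt{n/(n+m)}\,z$ and Taylor expand---is exactly the paper's approach, and your computations for parts (1)--(3) are on the right track where they apply. However, there is a genuine gap in the claim that $z \in {\rm I}_n \Rightarrow z' \in {\rm I}$ and $z \in {\rm III}_n \Rightarrow z' \in {\rm III}$. Recall that ${\rm I}$ and ${\rm III}$ exclude a \emph{fixed} $\delta$-ball around $\pm 1$, whereas ${\rm I}_n$ and ${\rm III}_n$ only exclude a shrinking strip of width $\delta n^{\epsilon_c - 2/3}$. Hence the portion of ${\rm I}_n$ (resp.\ ${\rm III}_n$) lying within distance $\delta$ of $\pm 1$ actually maps into ${\rm II}^\pm$, where only the Airy formula \eqref{eq. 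DKMVZ99 Plancherel-Rotach airy} is available, not the oscillatory (resp.\ exponential) one.

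The paper handles this by splitting each of parts (1) and (3) into two subcases. For $z' \in {\rm I}$ (resp.\ ${\rm III}$) your argument goes through. For $z' \in {\rm I}_n \cap {\rm II}$ one applies \eqref{eq. DKMVZ99 Plancherel-Rotach airy} and then invokes the large-argument Airy asymptotics $\Airy(-t), \Airy'(-t)$ at $t = -f_N(z')$ to convert back to the trigonometric form; the resulting error $O(s^{-1})$ with $s \sim n(1-z^2)^{3/2}$ is precisely the origin of the $O(n^{-1}(1-z^2)^{-3/2})$ term in \eqref{eq. Plancherel-Rotach osc near R}. Likewise, for $z' \in {\rm III}_n \cap {\rm II}$ one uses the exponential Airy asymptotics $\Airy(t), \Airy'(t)$ to recover \eqref{eq. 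Plancherel-Rotach exp near R} with the extra error $O(n^{-1}(z^2-1)^{-3/2})$. Without this overlap analysis, your error bounds for parts (1) and (3) are only justified on the smaller fixed regions ${\rm I}$, ${\rm III}$, and in particular your Taylor-expansion errors alone (at worst $O(n^{-1}(1-z^2)^{-1})$) do not account for the stated $(1-z^2)^{-3/2}$ degradation near the edge. Your treatment of part (2) is fine, since ${\rm II}_n^+ \subset {\rm II}^+$ does hold.
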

\begin{proof}
  Let us write 
  $$\psi_{n+m}(\sqrt{2(n+m)}\widetilde{z}) = \psi_{n+m}(\sqrt{2n}z), \qquad \widetilde{z} = \sqrt{ \frac{n}{n+m} } z. $$
The lemma then follows from straightforward computations using the asymptotics \eqref{eq. DKMVZ99 Plancherel-Rotach osc}–\eqref{eq. DKMVZ99 Plancherel-Rotach exp}. For the reader's convenience, we provide additional details.

    Suppose $\widetilde{z}\in {\rm I}$. By applying \eqref{eq. DKMVZ99 Plancherel-Rotach osc}, we have 
    \begin{align*}
    \begin{split}
        \psi_{n+m}(\sqrt{2n} z)& = 2^{\frac{1}{4}} \pi^{-\frac{1}{2}} (n+m)^{-\frac{1}{4}} (1-\widetilde{z}^2)^{-\frac{1}{4}} 
        \\
        &\quad \times \bigg[ \cos((n+m)\phi(\widetilde{z}) - \theta_0(\widetilde{z})) \Big(1 + O(n^{-1})\Big)  + \sin((n+m)\phi(\widetilde{z}) - \theta_0(\widetilde{z})) O(n^{-1}) \bigg].
    \end{split}
    \end{align*}
    For each term, we have uniform approximations
    \begin{align*}
        (1-\widetilde{z}^2)^{-\frac{1}{4}} &= (1-z^2)^{-\frac{1}{4}} (1 + O(n^{-1}(1-z^2)^{-1})),
        \\
        (n+m)\phi(\widetilde{z}) - \theta_0(\widetilde{z}) &= n\phi(z) - \theta_m(z) + O(n^{-1}(1-z^2)^{-\frac{1}{2}}).
    \end{align*}
    Substituting back in the above, we obtain \eqref{eq. Plancherel-Rotach osc near R}.
    
    Suppose $\widetilde{z} \in {\rm I}_n \cap {\rm II}$. 
    It follows from \eqref{eq. DKMVZ99 Plancherel-Rotach airy} that
    \begin{align}
    \begin{split} \label{eq. DKMVZ99 Plancherel-Rotach (n+m)-case airy}
        \psi_{n+m}(\sqrt{2n} z) = 2^{-\frac{1}{4}} (n+m)^{-\frac{1}{4}} &\bigg( \frac{(\widetilde{z}+1)^{\frac{1}{4}}}{(\widetilde{z}-1)^{\frac{1}{4}}} f_{n+m}(\widetilde{z})^{\frac{1}{4}} \Airy(f_{n+m}(\widetilde{z})) \Big( 1 + O(n^{-1}) \Big)
        \\
        &\quad - \frac{(\widetilde{z}-1)^{\frac{1}{4}}}{(\widetilde{z}+1)^{\frac{1}{4}}} f_{n+m}(\widetilde{z})^{-\frac{1}{4}} \Airy'(f_{n+m}(\widetilde{z})) \Big( 1 + O(n^{-1}) \Big) \bigg). 
    \end{split}
    \end{align}
    We use asymptotic of the Airy function \cite[Eqs.(9.7.9), (9.7.10) and \S9.7(iv)]{NIST}
    \begin{align} \label{eq. asymp Airy(-z)}
    \begin{split}
        \Airy(-t) &= \pi^{-\frac{1}{2}} t^{-\frac{1}{4}} \Big[ \cos\Big( s - \frac{\pi}{4} \Big) ( 1 + O(s^{-1}) ) + \sin\Big( s - \frac{\pi}{4} \Big) O(s^{-1}) \Big],
        \\
        \Airy'(-t) &= \pi^{-\frac{1}{2}} t^{\frac{1}{4}} \Big[ \cos\Big( s - \frac{\pi}{4} \Big) O(s^{-1}) + \sin\Big( s - \frac{\pi}{4} \Big) ( 1 + O(s^{-1}) ) \Big],
    \end{split}
    \end{align}
    where $s = \frac{2}{3} t^{\frac{3}{2}}$ and $|\arg t| \leq \frac{2}{3}\pi - \epsilon$ for any $\epsilon>0$, which gives
    \begin{align*}
    \begin{split}
        \psi_{n+m}(\sqrt{2n} z) = 2^{-\frac{1}{4}} (n+m)^{-\frac{1}{4}} &\bigg( \frac{(\widetilde{z}+1)^{\frac{1}{4}}}{(\widetilde{z}-1)^{\frac{1}{4}}} e^{i\frac{\pi}{4}} \cos(-(n+m)\phi(\widetilde{z})+\frac{\pi}{4}) \Big( 1 + O(n^{-1}\phi(\widetilde{z})^{-1}) \Big)
        \\
        &\quad - \frac{(\widetilde{z}-1)^{\frac{1}{4}}}{(\widetilde{z}+1)^{\frac{1}{4}}} e^{i\frac{\pi}{4}} \sin(-(n+m)\phi(\widetilde{z})+\frac{\pi}{4}) \Big( 1 + O(n^{-1}\phi(\widetilde{z})^{-1}) \Big) \bigg).
    \end{split}
    \end{align*}
    Then, the asymptotic \eqref{eq. Plancherel-Rotach osc near R} readily follows from the Taylor expansion and trigonometric identities.

    Suppose $\widetilde{z} \in {\rm II}_n$.
    By \eqref{eq. def of f hat}, we have 
    \begin{align*}
        \frac{(\widetilde{z}+1)^{\frac{1}{4}}}{(\widetilde{z}-1)^{\frac{1}{4}}} f_{n+m}(\widetilde{z})^{\frac{1}{4}} &= 2^{\frac{1}{2}} (n+m)^{\frac{1}{6}} (1 + O(\widetilde{z}-1)),
        \\
        f_{n+m}(\widetilde{z}) &= 2 (n+m)^{\frac{2}{3}} (w-1) (1 + O(w-1))
        \\
        &= \Big( 2 n^{\frac{2}{3}} (z-1) - m n^{-\frac{1}{3}} \Big) (1 + O(n^{-1}) + O(w-1)).
    \end{align*}
    Using the Taylor expansion and \eqref{eq. DKMVZ99 Plancherel-Rotach (n+m)-case airy}, the desired asymptotic behaviour \eqref{eq. Plancherel-Rotach Airy near R} follows.

    Suppose $\widetilde{z} \in {\rm III}_n \cap {\rm II}$.
    For \eqref{eq. DKMVZ99 Plancherel-Rotach (n+m)-case airy}, we use the asymptotics of the Airy functions \cite[Eqs.(9.7.5), (9.7.6)]{NIST}
    \begin{align} \label{eq. asymp Airy(z)}
    \begin{split}
        \Airy(t)  = 2^{-1} \pi^{-\frac{1}{2}} z^{-\frac{1}{4}} e^{-s} \Big( 1 + O(s^{-1}) \Big), \qquad 
     \Airy'(t) = -2^{-1} \pi^{-\frac{1}{2}} z^{\frac{1}{4}} e^{-s} \Big( 1 + O(s^{-1}) \Big),
    \end{split}
    \end{align}
    where $s = \frac{2}{3} t^{\frac{3}{2}}$ and $|\arg t| \leq \pi - \epsilon$ for any $\epsilon>0$.
    Together with \eqref{eq. def of f_n}, we obtain
    \begin{align*}
        \psi_{n+m}(\sqrt{2n} z) = 2^{-\frac{5}{4}} \pi^{-\frac{1}{2}} n^{-\frac{1}{4}} e^{-(n+m) i \phi(\widetilde{z})} \bigg( \frac{(\widetilde{z}-1)^\frac{1}{4}}{(\widetilde{z}+1)^\frac{1}{4}} + \frac{(\widetilde{z}+1)^\frac{1}{4}}{(\widetilde{z}-1)^\frac{1}{4}} \bigg) \Big( 1 + O(n^{-1} \phi(\widetilde{z})^{-1}) \Big).
    \end{align*}
    Here, the series expansion gives 
    \begin{equation*}
        \phi(\widetilde{z}) = \phi(z) + \frac{m}{2} z \phi'(z) + O(n^{-1} z^2 \phi''(z)).
    \end{equation*}
    Then, \eqref{eq. Plancherel-Rotach exp near R} follows after straightforward computations.

    Finally, assume that $\widetilde{z} \in {\rm III}$.
    Then, we write \eqref{eq. DKMVZ99 Plancherel-Rotach exp} as
    \begin{equation*}
        \psi_{n+m}(\sqrt{2n} z) = 2^{-\frac{3}{4}} \pi^{-\frac{1}{2}} (n+m)^{-\frac{1}{4}} e^{- (n+m) i \phi(\widetilde{z})} \bigg( \frac{(\widetilde{z}-1)^\frac{1}{4}}{(\widetilde{z}+1)^\frac{1}{4}} + \frac{(\widetilde{z}+1)^\frac{1}{4}}{(\widetilde{z}-1)^\frac{1}{4}} \bigg) \Big( 1 + O(n^{-1}) \Big).
    \end{equation*}
    Then, \eqref{eq. Plancherel-Rotach exp near R} follows from the power series expansion.
\end{proof}

\subsection{Asymptotics of the prekernel}
For the rest of this section, we denote
\begin{align}
\begin{split} \label{eq. def zeta, eta, z, w}
    \zeta = \sqrt{2n} z, \qquad \eta = \sqrt{2n} w,  \qquad 
    z = x + i \frac{y}{n}, \qquad  w = u + i \frac{v}{n}.
\end{split}
\end{align}
By combining the previous lemmas, we derive the asymptotic behaviour of the prekernel \eqref{eq. def of prekernel kappa}. 

\begin{lemma}[\textbf{Asymptotics of the prekernel on the diagonal near the real line}] \label{lem. asymp prekernel kappa(z,z)}
    Let $\zeta$ and $\eta$ be  given by \eqref{eq. def zeta, eta, z, w}. Then, as $n \to \infty$, the following asymptotic behaviours hold. 
    \begin{enumerate}
        \item Suppose $z \in {\rm I}_n$. Then, we have
        \begin{equation}
        \begin{split}\label{eq. asymp kappa diag I}
            \kappa_{n}(\zeta, \overline{\zeta}) &= \frac{i n}{ 16 \pi y^2 } \Big[ 4 y \sqrt{1-x^2} \cosh(4 y \sqrt{1-x^2}) - \sinh(4 y \sqrt{1-x^2} ) \Big] 
            \\
            &\quad + \cosh(4y\sqrt{1-x^2}) \Big( O(y^2) + O( (1-x^2)^{-2} y) \Big).
        \end{split}
        \end{equation}
        
        \item Suppose $z \in {\rm II}_n^+$ with $x = 1 + 2^{-1} n^{-\frac{2}{3}} \chi$. Then, we have
        \begin{equation} \label{eq. asymp kappa diag II}
            \kappa_{n}(\zeta,\overline{\zeta}) = n^\frac{2}{3} \Big[ \Airy( \chi ) \Big(O(\chi) + O(n^{-\frac{1}{3}}) \Big) + n^{-\frac{1}{3}} \Airy'( \chi ) \Big(O(\chi) + O(y) + O(n^{-\frac{1}{3}})\Big) \Big]^2.
        \end{equation}

        \item Suppose $z \in {\rm III}_n$. Then, we have
        \begin{equation} \label{eq. asymp kappa diag III}
            \kappa_{n}(\zeta,\overline{\zeta}) = n x^2 (x^2-1)^{-\frac{1}{2}} e^{- 2n i \phi(x) } O(1).
        \end{equation}
    \end{enumerate}
\end{lemma}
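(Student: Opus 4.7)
The plan is to analyse $\kappa_n(\zeta,\bar\zeta)$ by substituting the Plancherel--Rotach asymptotics of Lemma~\ref{lem. P-R for the Hermite function} into the rational representation \eqref{eq. rational rep prekernel}. Writing $\zeta = \sqrt{2n}(x + iy/n)$, one has $\zeta-\bar\zeta = 2\sqrt{2/n}\,iy$, which is small, so the factors $1/(\zeta-\bar\zeta)$ and $1/(\zeta-\bar\zeta)^2$ amplify the cancellations that force the bracketed expression in \eqref{eq. rational rep prekernel} to vanish when $\im z\to 0$. Extracting these cancellations to the correct order in each region is the common theme of the proof.

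In region ${\rm I}_n$, the Taylor expansion $n\phi(z) - \theta_m(z) = A_m(x) + 2iy\sqrt{1-x^2} + O(y/n)$ with $A_m(x):= n\phi(x) - \theta_m(x)\in\R$ turns \eqref{eq. Plancherel-Rotach osc near R} into
\begin{equation*}
\psi_{n+m}(\zeta) \approx C_n (1-x^2)^{-1/4}\bigl(\cos A_m\,\cosh B - i\sin A_m\,\sinh B\bigr), \qquad B = 2y\sqrt{1-x^2},
\end{equation*}
with $C_n = 2^{1/4}\pi^{-1/2}n^{-1/4}$, and $\psi_{n+m}(\bar\zeta)$ is the complex conjugate. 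Using the angle identity $A_0 - A_{-1} = -\arccos x$, the antisymmetric combination $\Psi_{n,2}=\psi_n(\zeta)\psi_{n-1}(\bar\zeta) - \psi_{n-1}(\zeta)\psi_n(\bar\zeta)$ collapses to
\begin{equation*}
\Psi_{n,2} \approx 2iC_n^2(1-x^2)^{-1/2}\sinh B\,\cosh B\,\sin(\arccos x) = iC_n^2\sinh(4y\sqrt{1-x^2}),
\end{equation*}
and dividing by $(\zeta-\bar\zeta)^2 = -8y^2/n$ and multiplying by $\sqrt{n}/(2\sqrt{2})$ reproduces the $-\sinh$ piece of \eqref{eq. asymp kappa diag I}. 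A similar but longer computation for $\Psi_{n,1}/(\zeta-\bar\zeta)$, which combines the three contributions $\psi_{n-1}\psi_{n-1}$, $\psi_{n-2}\psi_n$ and $\zeta\psi_{n-1}\psi_n$ and uses the identity $A_0 - A_{-2} = -2\arccos x$ together with the three-term recurrence, yields the $4y\sqrt{1-x^2}\cosh(4y\sqrt{1-x^2})$ contribution.

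In region ${\rm II}_n^+$, we insert the Airy asymptotic \eqref{eq. Plancherel-Rotach Airy near R}, writing $\psi_{n+m}(\zeta) = 2^{1/4}n^{-1/12}\bigl[\Airy(\chi + 2iyn^{-1/3})(1+O(\chi)+O(n^{-1})) - (m+\tfrac{1}{2})n^{-1/3}\Airy'(\chi+2iyn^{-1/3})(1+O(\chi)+O(n^{-1}))\bigr]$ and analogously for $\bar\zeta$. Taylor expansion of $\Airy$ and $\Airy'$ about $\chi$, multiplication through the numerators of \eqref{eq. rational rep prekernel}, and division by the appropriate powers of $\zeta-\bar\zeta$ yield \eqref{eq. asymp kappa diag II}. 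For region ${\rm III}_n$, substituting \eqref{eq. Plancherel-Rotach exp near R} gives $\psi_{n+m}(\zeta)\psi_{n+m'}(\bar\zeta) = O(n^{-1/2}(x^2-1)^{-1/2}e^{-2ni\phi(x)})$, where the two phases $e^{-ni\phi(z)}$ and $e^{-ni\phi(\bar z)}$ combine via $\phi(\bar z) = \overline{\phi(z)}$ into $e^{-2ni\phi(x)}$; the algebraic prefactors combine with the $\zeta\approx\sqrt{2n}\,x$ appearing in $\Psi_{n,1}$ to produce the $nx^2(x^2-1)^{-1/2}$ prefactor of \eqref{eq. asymp kappa diag III}.

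The main technical obstacle is the bookkeeping in region ${\rm I}_n$: the subleading error $O(n^{-1}(1-x^2)^{-3/2})$ in \eqref{eq. Plancherel-Rotach osc near R}, together with the $O(y/n)$ correction from the Taylor expansion of the phase, must be propagated through products and then divided by the small quantity $(\zeta-\bar\zeta)^2 = O(y^2/n)$ without producing spurious blow-up in $y$. The crucial observation is that $\Psi_{n,2}$ vanishes identically when $y=0$, so its error contributions are themselves suppressed by a factor of $y$; leveraging this together with the exchange symmetry $\zeta\leftrightarrow\bar\zeta$ is what produces the stated error bound $\cosh(4y\sqrt{1-x^2})\bigl(O(y^2) + O((1-x^2)^{-2}y)\bigr)$.
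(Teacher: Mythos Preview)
Your approach differs from the paper's in one essential respect: you work throughout with the rational representation \eqref{eq. rational rep prekernel}, whereas the paper uses the integral representation \eqref{eq. integral rep prekernel}, in which the small denominators $\zeta-\bar\zeta$ and $(\zeta-\bar\zeta)^2$ have been removed. Your leading-order computations in region ${\rm I}_n$ (the collapse of $\Psi_{n,2}$ to $iC_n^2\sinh(4y\sqrt{1-x^2})$ and the companion identity for $\Psi_{n,1}$) are correct, and for regions ${\rm II}_n^+$ and ${\rm III}_n$ your outline is essentially equivalent to what the paper does.

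The gap is in the error control for region ${\rm I}_n$. After dividing by $(\zeta-\bar\zeta)^2=-8y^2/n$, a generic error of size $C_n^2\cosh^2(B)\cdot O(n^{-1}(1-x^2)^{-3/2})$ in $\Psi_{n,2}$ becomes $O(y^{-2}(1-x^2)^{-3/2}\cosh^2 B)$, which diverges as $y\to 0$, whereas the target error $\cosh(4y\sqrt{1-x^2})\bigl(O(y^2)+O((1-x^2)^{-2}y)\bigr)$ vanishes there. Your fix---``$\Psi_{n,2}$ vanishes identically at $y=0$, so its error contributions are suppressed by a factor of $y$''---is a statement about the exact $\Psi_{n,2}$, not about the remainder after inserting the Plancherel--Rotach asymptotics. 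To convert vanishing at $y=0$ into an honest $O(y)$ bound on the cross terms $L_0\overline{R_{-1}}-R_{-1}\overline{L_0}$ etc.\ you need derivative bounds on the remainders $R_m$, which Lemma~\ref{lem. P-R for the Hermite function} does not supply (it gives only uniform size bounds). One could try to recover derivative bounds via Cauchy estimates using the uniformity in the complex strip ${\rm I}_n$, but this requires care and is not the argument you sketched.

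The paper sidesteps this entirely: in \eqref{eq. integral rep prekernel} the prekernel is written as $\psi_{n-1}(\bar\zeta)\int_0^1 t\,\widetilde\psi_n(s_t)\,dt-\psi_n(\bar\zeta)\int_0^1 t\,\widetilde\psi_{n-1}(s_t)\,dt+\psi_n(\zeta)\psi_{n-1}(\bar\zeta)$ with $s_t=t\zeta+(1-t)\bar\zeta$, and one first derives the asymptotic of $\widetilde\psi_{n+m}$ (a combination of $\psi_{n+m},\psi_{n+m-1},\psi_{n+m-2}$) in the form $2^{5/4}\pi^{-1/2}n^{3/4}(1-z^2)^{3/4}[-\cos(n\phi-\theta_m)+\text{error}]$. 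Substituting this and evaluating the resulting $t$-integral $\int_0^1 t(1-x^2)\sinh(4ty\sqrt{1-x^2})\,dt$ directly produces \eqref{eq. asymp kappa diag I} with the stated error, since no division by $y$ ever occurs. If you want to stay with the rational representation, you must either supply the missing derivative bounds or exhibit an explicit two-term expansion of the remainders that respects the antisymmetry.
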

\begin{proof}
    Suppose $z\in{\rm I}_n$.
    It follows from \eqref{eq. Plancherel-Rotach osc near R} and elementary trigonometric identities that
    \begin{equation} \label{eq. psi tilde osc}
        \widetilde{\psi}_{n+m}(\sqrt{2n} z) =  2^{\frac{5}{4}} \pi^{-\frac{1}{2}} n^{\frac{3}{4}} (1-z^2)^{\frac{3}{4}} \Big[ - \cos(n\phi(z) - \theta_m(z)) + \cosh(n \im \phi(z)) O(n^{-1} (1-z^2)^{-\frac{5}{2}}) \Big],
    \end{equation}
    where the error term is uniform.
    Together with \eqref{eq. Plancherel-Rotach osc near R} and \eqref{eq. integral rep prekernel}, we have
    \begin{align}
    \begin{split} \label{eq. kappa(z,w) integral region I-I}
        \kappa_{n}(\zeta, \eta) &= \frac{n}{\pi} \int_0^1 t (1-w^2)^{-\frac{1}{4}} (1-s_t)^{\frac{3}{4}} 
        \\
        &\qquad \times\Big[ \cos(n\phi(s_t)-\theta_{-1}(s_t)) \cos(n\phi(w)-\theta_{0}(w))
         - \cos(n\phi(s_t)-\theta_{0}(s_t)) \cos(n\phi(w)-\theta_{-1}(w))  
        \\
        &\qquad\qquad + \cosh(n \im \phi(w)) \cosh(n \im \phi(s_t)) \Big(O(n^{-1} (1-w^2)^{-\frac{3}{2}} + O(n^{-1} (1-s_t^2)^{-\frac{5}{2}}\Big) \Big] \,dt
        \\
        & \quad + \frac{1}{2\pi} (1-z^2)^{-\frac{1}{4}} (1-w^2)^{-\frac{1}{4}} \Big[ \cos(n\phi(z)-\theta_{0}(z)) \cos(n\phi(w)-\theta_{-1}(w)) 
        \\
        &\qquad + \cosh(n \im \phi(z)) \cosh(n \im \phi(w)) \Big(O(n^{-1} (1-z^2)^{-\frac{3}{2}}) + O(n^{-1} (1-w^2)^{-\frac{3}{2}}\Big) \Big],
    \end{split}
    \end{align}
    where $s_t = t z + i (1-t) w$.
    For $\eta = \overline{\zeta}$, the leading asymptotic of the integral can be explicitly computed.
    More specifically, we use $z = x + i n^{-1} y$ to obtain uniform approximations
    \begin{align}
        (1-z^2)^{-a} &= (1-x^2)^{-a} \Big(1 + O(n^{-1}(1-x^2)^{-1}y) \Big), \label{eq. approx of (1-z^2)^a}
        \\
        \cos(n\phi(z) - \theta_m(z)) &= \cos\Big(n\phi(x)-\theta_m(x)+2iy\sqrt{1-x^2}\Big) + \cosh(2 y \sqrt{1-x^2}) O(n^{-1}y^2(1-x^2)^{-\frac{1}{2}}) \label{eq. approx of cos(n phi(z) - theta(z))}
    \end{align}
    for $a>0$, and analogous approximations for $s_t$ in place of $z$.
    Then, it follows from trigonometric identities that
    \begin{align*}
    \begin{split}
        \kappa_{n}(\zeta, \overline{\zeta}) &= \frac{i n}{\pi} \int_0^1 t (1-x^2) \sinh(4 t y \sqrt{1-x^2} ) \,dt + \cosh(4y\sqrt{1-x^2}) \Big[ O(y^2) + O((1-x^2)^{-2}y) \Big].
    \end{split}
    \end{align*}
    By evaluating the integral, we obtain \eqref{eq. asymp kappa diag I}.

    Secondly, we consider the case  $z \in {\rm II}_n^+$ with $x = 1 + 2^{-1}n^{-\frac{2}{3}} \chi$.
    By \eqref{eq. Plancherel-Rotach Airy near R} and Taylor expansion, we have
    \begin{equation}
        \widetilde{\psi}_{n+m}(\sqrt{2n} z) = n^{\frac{1}{4}} \Airy(\chi) \Big( O(\chi) + O(n^{-\frac{1}{3}}) \Big) + n^{-\frac{1}{12}} \Airy'(\chi) \Big( O(\chi) + O(y) + O(n^{-\frac{1}{3}}) \Big).
    \end{equation}
    Then, straightforward computations using \eqref{eq. integral rep prekernel} give rise to \eqref{eq. asymp kappa diag II}.

    Finally, let $z \in {\rm III}_n$ with $x > 1$.
    Then, it follows from \eqref{eq. Plancherel-Rotach exp near R}  that
    \begin{equation}
        \widetilde{\psi}_{n+m}(\sqrt{2n} z) = n^{\frac{3}{4}} z^2 e^{-(n+m)i\phi(z) + miz\phi'(z)/2 } \bigg( \frac{(z-1)^\frac{1}{4}}{(z+1)^\frac{1}{4}} + \frac{(z+1)^\frac{1}{4}}{(z-1)^\frac{1}{4}} \bigg) O(1).
    \end{equation}
    We uniformly approximate the exponent
    \begin{equation}
        -(n+m)i\phi(z) + \frac{miz}{2} \phi'(z) = - n i \phi(x) - 2 i y \sqrt{x^2-1} + m \log(x + \sqrt{x^2-1})  + O\Big( \frac{xy^2}{n \sqrt{x^2-1} } \Big).
    \end{equation}
    Substituting the above in \eqref{eq. integral rep prekernel}, we obtain
    \begin{equation}
        \kappa_{n}(\zeta,\overline{\zeta}) = n x e^{- 2n i \phi(x) } \bigg( \frac{(x-1)^\frac{1}{4}}{(x+1)^\frac{1}{4}} + \frac{(x+1)^\frac{1}{4}}{(x-1)^\frac{1}{4}} \bigg)^2 O(1).
    \end{equation}
    Then, by using \eqref{eq. def zeta, eta, z, w}, we have \eqref{eq. asymp kappa diag III}.
    The case $x<-1$ can be proved similarly, and we omit the details.
\end{proof}

\begin{lemma}[\textbf{Asymptotics of the prekernel on the off-diagonal near the real line}] \label{lem. asymp prekernel kappa(z,w)}
 Let $\zeta$ and $\eta$ be  given by \eqref{eq. def zeta, eta, z, w}. Then, as $n \to \infty$, the following asymptotic behaviour holds.  
    \begin{enumerate}
        \item Suppose $z, w \in {\rm I}_n$. Then,  we have
        \begin{align}
        \begin{split}\label{eq. kappa(z,w) region I-I O(n)}
            \kappa_{n}(\zeta,\eta) &= \cosh(4 y \sqrt{1-x^2}) \cosh(4 v \sqrt{1-u^2}) \Big( O(n) + O((1-x^2)^{-2}) + O((1-u^2)^{-2}) \Big) 
        \end{split}
        \end{align}
        and
        \begin{equation} \label{eq. kappa(z,w) region I-I O(1)}
            \kappa_{n}(\zeta,\eta) = (1-z^2)^{-\frac{1}{4}} (1-w^2)^{-\frac{1}{4}} \cosh(2 y \sqrt{1-x^2}) \cosh(2 v \sqrt{1-u^2})  \Big( O( (z-w)^{-1}) + O( n^{-1} (z-w)^{-2} ) \Big).
        \end{equation}
        \item Suppose $z \in {\rm I}_n$ and $w \in {\rm II}_n^+$ with $u = 1 + 2^{-1} n^{-\frac{2}{3}} \varsigma$.
        Then, we have
        \begin{equation} \label{eq. kappa(z,w) region I-II O(n^1/3-eps/4)}
            \kappa_{n}(\zeta,\eta) = n^{\frac{1}{3}} \cosh(2 y \sqrt{1 - x^2}) \Airy(\varsigma)  \Big( O((z-w)^{-1}) + O(n^{-1}(z-w)^{-2}) \Big).
        \end{equation}
        Furthermore, assume that $|z-w| \leq n^{(\epsilon - 2)/3}$ with $x = 1 + 2^{-1} n^{-\frac{2}{3}} \chi$. Then, we have
        \begin{equation} \label{eq. kappa(z,w) region I-II O(n^2/3)}
            \kappa_{n}(\zeta,\eta) = n^{\frac{2}{3}} \Airy(\chi) \Airy(\varsigma) O(1).
        \end{equation}

        \item Suppose $z, w \in {\rm II}_n^+$ with $x = 1 + 2^{-1} n^{-\frac{2}{3}} \chi$ and $u = 1 + 2^{-1} n^{-\frac{2}{3}} \varsigma$. Then, we have
        \begin{align}
        \begin{split} \label{eq. kappa(z,w) region II-II}
            \kappa_{n}(\zeta,\eta) &= n^{\frac{2}{3}} \Airy(\chi) \Airy(\varsigma) O(1).
        \end{split}
        \end{align}

        \item Suppose $z\in {\rm I}_n$ and $w\in {\rm III}_n$. Then, we have
        \begin{equation} \label{eq. kappa(z,w) region I-III}
            \kappa_{n}(\zeta,\eta) = \cosh(2 y \sqrt{1-x^2}) u^2 e^{- n i \phi(u)} O(n),
        \end{equation}

        \item Suppose $z\in {\rm II}_n^+ $ and $w\in {\rm III}_n$ with $x = 1 + 2^{-1} n^{-\frac{2}{3}} \chi$. Then, we have
        \begin{equation} \label{eq. kappa(z,w) region II-III}
            \kappa_{n}(\zeta,\eta) = \Airy(\chi) u^2 e^{- n i \phi(u)} O(n^2),
        \end{equation}

        \item Suppose $z, w \in {\rm III}_n$. Then, we have
        \begin{equation} \label{eq. kappa(z,w) region III-III}
            \kappa_{n}(\zeta,\eta) = |x u| (x^2-1)^{-\frac{1}{4}} (u^2-1)^{-\frac{1}{4}} e^{-n i \phi(x)} e^{-n i  \phi(u)} O(n).
        \end{equation}
    \end{enumerate}
\end{lemma}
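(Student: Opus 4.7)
The proof will follow the same template as Lemma~\ref{lem. asymp prekernel kappa(z,z)}: insert the Plancherel--Rotach asymptotics from Lemma~\ref{lem. P-R for the Hermite function} into one of the two representations of the prekernel from Lemma~\ref{lem. Alt rep of prekernel}. The key observation is that we switch between the integral representation \eqref{eq. integral rep prekernel} and the rational representation \eqref{eq. rational rep prekernel} depending on whether we need to absorb the singularity at $\zeta=\eta$ or expose the factor $(\zeta-\eta)^{-1}$. For $\widetilde{\psi}_{n+m}(\sqrt{2n}s_t)$ appearing in \eqref{eq. integral rep prekernel}, we reuse formula \eqref{eq. psi tilde osc} (and its Airy/exponential analogues, which follow from \eqref{eq. derivative of the Hermite func.} applied twice to \eqref{eq. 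Plancherel-Rotach Airy near R} and \eqref{eq. Plancherel-Rotach exp near R}) along the line segment $s_t = tz+(1-t)w$.

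For Case (1), both bounds follow from the representation obtained in \eqref{eq. kappa(z,w) integral region I-I}, which is valid for arbitrary $z,w \in \mathrm{I}_n$. The first bound \eqref{eq. kappa(z,w) region I-I O(n)} is obtained by taking the crude envelope of each oscillatory term using $|\cos(n\phi(s_t)-\theta_m(s_t))|\le \cosh(n\im\phi(s_t))$ and integrating in $t$ (this yields the $O(n)$ from the explicit prefactor, and the $O((1-x^2)^{-2})$, $O((1-u^2)^{-2})$ from the error parts of \eqref{eq. psi tilde osc}). The sharper bound \eqref{eq. kappa(z,w) region I-I O(1)} instead follows from the rational representation \eqref{eq. rational rep prekernel}: substituting \eqref{eq. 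Plancherel-Rotach osc near R} for all four Hermite factors in $\Psi_{n,1}$ and $\Psi_{n,2}$ and using $|\cos|, |\sin| \le \cosh(n\im\phi)$ gives exactly the stated form with the $(z-w)^{-1}$ and $(z-w)^{-2}$ singularities explicit.

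For Cases (2) and (3), which involve the edge region $\mathrm{II}_n^+$, we again use the rational representation. We substitute \eqref{eq. Plancherel-Rotach Airy near R} (together with the oscillatory \eqref{eq. Plancherel-Rotach osc near R} where appropriate) into $\Psi_{n,1}$ and $\Psi_{n,2}$, noting that the Hermite function factor $n^{-1/12}$ of the Airy piece, combined with the $\sqrt{n}$ prefactor in \eqref{eq. rational rep prekernel}, yields the $n^{1/3}$ or $n^{2/3}$ scaling in \eqref{eq. kappa(z,w) region I-II O(n^1/3-eps/4)} and \eqref{eq. kappa(z,w) region II-II}. For the sharper regime $|z-w|\le n^{(\epsilon-2)/3}$ in \eqref{eq. kappa(z,w) region I-II O(n^2/3)}, we go back to the integral representation \eqref{eq. integral rep prekernel}: along this short segment $s_t$ stays within $\mathrm{II}_n^+$, so the Airy asymptotic applies uniformly and the trivial $t$-integration of a product of $O(1)$ Airy factors produces the claimed $n^{2/3}\Airy(\chi)\Airy(\varsigma)O(1)$. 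For Case (3), since $|\zeta-\eta|\ge c n^{\epsilon_c-1/6}$ is available (or we split with the short-scale sub-case above), the rational representation gives the bound directly.

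Cases (4)--(6) combine the exponential asymptotic \eqref{eq. Plancherel-Rotach exp near R} with \eqref{eq. Plancherel-Rotach osc near R} or \eqref{eq. Plancherel-Rotach Airy near R}. We use the rational representation and observe that, for $w\in\mathrm{III}_n$, we have $|\zeta-\eta|\ge c\sqrt{n}\,\delta n^{\epsilon_c-2/3}$, so both $(\zeta-\eta)^{-1}$ and $(\zeta-\eta)^{-2}$ contribute polynomially in $n$; balancing these against the prefactors $\sqrt{n}\cdot n^{-1/4}\cdot(\text{edge prefactor})$ yields \eqref{eq. kappa(z,w) region I-III}, \eqref{eq. kappa(z,w) region II-III}, \eqref{eq. kappa(z,w) region III-III}. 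The factor $e^{-ni\phi(u)}$ (or $e^{-ni\phi(x)}e^{-ni\phi(u)}$) is simply preserved from \eqref{eq. Plancherel-Rotach exp near R}, while the exponentials $\cosh(2y\sqrt{1-x^2})$ in the mixed cases come from the oscillatory factor with complex argument \eqref{eq. approx of cos(n phi(z) - theta(z))}.

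The main obstacle is bookkeeping rather than a conceptual difficulty: one must carefully track which region $s_t$ lies in as $t$ varies (crucially, in Cases (2), (4), (5) the segment may pass through two regions, requiring the integral to be split), and verify that the error terms in Lemma~\ref{lem. P-R for the Hermite function} remain uniformly controlled after the $t$-integration and after the differentiation absorbed in $\widetilde{\psi}_{n+m}$. The delicate case is \eqref{eq. kappa(z,w) region I-II O(n^1/3-eps/4)} where $s_t$ interpolates between oscillatory and Airy regimes; here we split $[0,1]$ at the value of $t$ at which $\re s_t = 1-\delta n^{\epsilon_c-2/3}$, estimate each piece with the appropriate asymptotic, and check that the matching error at the interface is subdominant to $n^{1/3}\Airy(\varsigma)\cosh(2y\sqrt{1-x^2})$.
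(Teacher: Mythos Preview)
Your approach is essentially the same as the paper's, and for Cases (1)--(3) your plan matches the paper's proof almost exactly: the integral representation \eqref{eq. kappa(z,w) integral region I-I} with a cosh envelope for \eqref{eq. kappa(z,w) region I-I O(n)}, the rational representation \eqref{eq. rational rep prekernel} for \eqref{eq. kappa(z,w) region I-I O(1)} and \eqref{eq. kappa(z,w) region I-II O(n^1/3-eps/4)}, and the integral representation again for the short-range bounds \eqref{eq. kappa(z,w) region I-II O(n^2/3)} and \eqref{eq. kappa(z,w) region II-II}. The paper also uses a convexity argument to control $\max_{t}\cosh(2\im s_t\sqrt{1-(\re s_t)^2})$ in Case (1), which is worth noting but is not a different idea.

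There is one concrete error in your treatment of Cases (4)--(6). Your claimed lower bound $|\zeta-\eta|\ge c\sqrt{n}\,\delta n^{\epsilon_c-2/3}$ is valid in Case (4), where $\mathrm{I}_n$ and $\mathrm{III}_n$ are separated by $\mathrm{II}_n^\pm$, but it is \emph{false} in Cases (5) and (6): a point in $\mathrm{II}_n^+$ and a point in $\mathrm{III}_n$ can be arbitrarily close across the common boundary $\re z = 1+\delta n^{\epsilon_c-2/3}$, and two points in $\mathrm{III}_n$ can certainly be close to each other. So the rational representation alone does not give \eqref{eq. kappa(z,w) region II-III} or \eqref{eq. kappa(z,w) region III-III} uniformly. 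The paper handles this exactly as you already do in Case (2): split according to whether $|z-w|>n^{(\epsilon-2)/3}$ (rational representation) or $|z-w|\le n^{(\epsilon-2)/3}$ (integral representation, with the exponential asymptotic \eqref{eq. Plancherel-Rotach exp near R} applied along the short segment $s_t$). Once you import that split into Cases (5) and (6), your argument goes through.
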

\begin{proof}
    Suppose $z,w \in {\rm I}_n$. We may assume that $x^2 > u^2$, otherwise proceed with $\kappa_{n}(\eta, \zeta) = - \kappa_{n}(\zeta, \eta)$ instead.
    By using \eqref{eq. kappa(z,w) integral region I-I}, \eqref{eq. approx of (1-z^2)^a} and \eqref{eq. approx of cos(n phi(z) - theta(z))}, we have
    \begin{align*}
    \begin{split}
        \kappa_{n}(\zeta,\eta) &= \cosh(2v\sqrt{1-u^2}) \max_{t\in[0,1]} \Big\{ \cosh(2 \im s_{t}\sqrt{1-(\re s_{t})^2})\Big\} \Big( (1-u^2)^\frac{1}{2} O(n) + O((1-u^2)^{-2}) \Big)
        \\
        &\quad +  (1-z^2)^{-\frac{1}{4}} (1-w^2)^{-\frac{1}{4}} \cosh(2 y \sqrt{1-x^2}) \cosh(2 v \sqrt{1-u^2})  O( 1 ),
    \end{split}
    \end{align*}
    where $s_t = t z + (1-t)w$.
    By convexity, we have
    \begin{equation*}
        \max_{t\in[0,1]} \Big\{ \cosh(2 \im s_{t}\sqrt{1-(\re s_{t})^2})\Big\} \leq  \cosh(2 v \sqrt{1-u^2}) \cosh(2 y \sqrt{1-x^2}).
    \end{equation*}
    Therefore, we obtain
    \begin{equation*}
     \kappa_{n}(\zeta,\eta) = \cosh(4 v \sqrt{1-u^2}) \cosh(2 y \sqrt{1-x^2}) \Big( O(n) + O((1-u^2)^{-2}) \Big). 
    \end{equation*}
  To remove the assumption \( x^2 > u^2 \), we symmetrize the expression as in \eqref{eq. kappa(z,w) region I-I O(n)}.  
For \eqref{eq. kappa(z,w) region I-I O(1)}, we substitute \eqref{eq. Plancherel-Rotach osc near R} into \eqref{eq. rational rep prekernel}. The asymptotic formula \eqref{eq. kappa(z,w) region I-I O(1)} then follows from straightforward computations.
    
    Suppose $z \in {\rm I}_n$ and $w \in {\rm II}_n$. We use \eqref{eq. Plancherel-Rotach osc near R} and \eqref{eq. Plancherel-Rotach Airy near R} for \eqref{eq. rational rep prekernel}, and obtain
    \begin{equation*}
        \kappa_{n}(\zeta,\eta) = n^{\frac{1}{6}} (1-z^2)^{-\frac{1}{4}} \cosh(2y\sqrt{1-x^2}) \Airy(\varsigma) \Big( O((z-w)^{-1}) + O((z-w)^{-2}) \Big).
    \end{equation*}
    Since $z \in {\rm I}_n$, this implies \eqref{eq. kappa(z,w) region I-II O(n^1/3-eps/4)}.
    We further assume that $|z-w| \leq n^{(\epsilon - 2)/3}$.
    Then, we can also apply \eqref{eq. Plancherel-Rotach Airy near R} for the asymptotic of $\psi_{n+m}(\sqrt{2n} z)$.
    Thus from \eqref{eq. integral rep prekernel}, we have
    \begin{align}
    \begin{split} \label{eq. estimation kappa(z,w) II-II}
        \kappa_{n}(\zeta,\eta) &= n^{\frac{2}{3}} \Big( \Airy(2n^\frac{2}{3} (w-1)) O(1) + \Airy'(2n^\frac{2}{3} (w-1)) n^{-\frac{1}{3}} O(1) \Big)
        \\
        & \quad \times \max_{t\in[0,1]} \Big\{ \Airy(2n^\frac{2}{3}(s_t - 1)) \Big(O(s_t-1) + O(n^{-\frac{2}{3}}) \Big) 
        \\
        &\qquad \qquad \qquad + \Airy'(2n^\frac{2}{3}(s_t - 1)) n^{-\frac{1}{3}} \Big(O(s_t-1) + O(n^{-\frac{2}{3}}) \Big) \Big\}.
    \end{split}
    \end{align}
    Then, we obtain the desired asymptotic behaviour \eqref{eq. kappa(z,w) region I-II O(n^2/3)}.

   We prove the remaining cases in a similar manner.  
More specifically, we use the rational expression \eqref{eq. rational rep prekernel} when \( |z-w| > n^{(\epsilon-2)/3} \), and the integral representation \eqref{eq. integral rep prekernel} otherwise.  
Lemma~\ref{lem. P-R for the Hermite function} then provides the desired results.  
The proof for \eqref{eq. kappa(z,w) region II-II}–\eqref{eq. kappa(z,w) region III-III} is left to the interested reader.
\end{proof}

\begin{lemma}[\textbf{A bound of the prekernel in the whole domain}] \label{lem. asymp prekernel kappa(z,w) far away}
    Let $\zeta$ and $\eta$ be  given by \eqref{eq. def zeta, eta, z, w}.  Then,  there are constants $c=c(\alpha)$ and $d=d(\alpha)>0$ that satisfy
    \begin{equation} \label{eq. exp decay of kappa erfc}
        \kappa_{n}(\zeta,\eta) \sqrt{ \erfc\Big(\frac{2|y|}{\alpha}\Big) \erfc\Big(\frac{2 |v|}{\alpha}\Big) } = O\Big(e^{- c (y^2 + v^2)} e^{- d n ( |x| \sqrt{x^2-1}+ |u| \sqrt{u^2-1} )} \Big)
    \end{equation}
    for any $\zeta, \eta \in \C$.
\end{lemma}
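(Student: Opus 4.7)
My plan is to establish this lemma by combining the rational representation \eqref{eq. rational rep prekernel} of the prekernel with the Plancherel--Rotach asymptotics of Lemma~\ref{lem. P-R for the Hermite function}, performing a case analysis on the location of $z=\zeta/\sqrt{2n}$ and $w=\eta/\sqrt{2n}$, and absorbing any residual growth in $\im\zeta,\im\eta$ using the classical tail bound $\erfc(t)\leq(\sqrt\pi\,t)^{-1}e^{-t^2}$ valid for $t>0$.

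Through \eqref{eq. rational rep prekernel}, the problem reduces to bounding $|\psi_{n+m}(\zeta)\psi_{n+m'}(\eta)|$ for $m,m'\in\{-2,-1,0\}$; the inverse powers of $\zeta-\eta$ are harmless away from the diagonal, while near the diagonal one switches to the integral representation \eqref{eq. integral rep prekernel} so that the rational factors disappear. Splitting the plane into the subdomains ${\rm I}_n,{\rm II}_n^{\pm},{\rm III}_n$ and invoking Lemma~\ref{lem. P-R for the Hermite function} yields, in the bulk, $|\psi_{n+m}(\sqrt{2n}z)|=O(n^{-1/4}(1-x^2)^{-1/4}\cosh(2|y|\sqrt{1-x^2}))$; at the edge a uniformly controlled Airy expression; and in the exterior, $|\psi_{n+m}(\sqrt{2n}z)|=O(n^{-1/4}e^{-n\re(i\phi(z))})$. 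For real $x$ with $|x|>1$ one has $\re(i\phi(x))=|x|\sqrt{x^2-1}-\log(|x|+\sqrt{x^2-1})$, and for any fixed $\delta>0$ a direct monotonicity check gives $d\in(0,1)$ with $\re(i\phi(x))\geq d|x|\sqrt{x^2-1}$ uniformly on $\{|x|\geq 1+\delta\}$, which produces the desired factor $e^{-dn(|x|\sqrt{x^2-1}+|u|\sqrt{u^2-1})}$.

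For $|\im\zeta|$ outside $\Omega_n$ (i.e.\ $|y|\geq n^{\epsilon_i}$), Lemma~\ref{lem. P-R for the Hermite function} does not directly apply, so I would use the integral representation $H_N(\zeta)=\frac{2^N}{\sqrt\pi}\int_\R(\zeta+it)^N e^{-t^2}\,dt$ to derive the crude global bound $|\psi_N(\zeta)|\leq C e^{(\im\zeta)^2/2}$ up to polynomial corrections. In our scaling this amounts to growth of order $e^{y^2/n}$, which is dominated by $\sqrt{\erfc(2|y|/\alpha)\erfc(2|v|/\alpha)}\leq C e^{-2(y^2+v^2)/\alpha^2}$ once $n$ is large enough that $2/\alpha^2>1/n$, leaving a net Gaussian decay with rate $c=2/\alpha^2-O(1/n)>0$ uniform in $n$.

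The principal obstacle is ensuring that $c$ and $d$ are uniform across the three matching subregions, particularly across the boundaries $|\re z|=1\pm\delta$ where the Airy asymptotic must smoothly interpolate between the oscillatory bulk and the exponentially decaying exterior. Choosing $\delta,\epsilon_i,\epsilon_c$ small enough, all three Plancherel--Rotach regimes yield bounds of compatible order on the overlap of their domains, with the implicit constants depending only on $\alpha$. Assembling the case-wise bounds then produces \eqref{eq. exp decay of kappa erfc}.
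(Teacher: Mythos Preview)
Your reduction to single–Hermite-function bounds via \eqref{eq. rational rep prekernel} and \eqref{eq. integral rep prekernel} is the same as the paper's. The gap is in your handling of the complement of $\Omega_n$. First, the claimed bound $|\psi_N(\zeta)|\le Ce^{(\im\zeta)^2/2}$ ``up to polynomial corrections'' does not follow from the integral representation: since $H_N$ has degree $N\sim n$, for purely imaginary $\zeta=ib$ one has $|\psi_N(ib)|\asymp N^{-1/4}\bigl(|b|\sqrt{2e/N}\bigr)^{N}e^{b^2/2}$, and the prefactor is exponentially large once $|b|\gtrsim\sqrt N$ (equivalently $|y|\gtrsim n$); it is not a polynomial correction in any fixed degree. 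Second, and more damaging, your crude bound contains no information about $x=\re z$. In the range $n^{\epsilon_i}\le|y|\ll\sqrt n$ with $|x|$ bounded away from $[-1,1]$ you would need $e^{-c'y^2}\le Ce^{-cy^2-dn|x|\sqrt{x^2-1}}$, which fails because $y^2=o(n)$ there while $n|x|\sqrt{x^2-1}\asymp n$.

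The paper avoids this entirely by working not with the thin strips ${\rm I}_n,{\rm II}_n^\pm,{\rm III}_n\subset\Omega_n$ of Lemma~\ref{lem. P-R for the Hermite function}, but with the \emph{fixed} regions ${\rm I},{\rm II}^\pm,{\rm III}$ on which the DKMVZ asymptotics \eqref{eq. DKMVZ99 Plancherel-Rotach osc}--\eqref{eq. DKMVZ99 Plancherel-Rotach exp} already hold uniformly and which cover all of $\C$; no global Hermite estimate is needed. The only additional work is in region ${\rm III}$, which contains both points with $|x|>1$ and small $y$ and points with $|y|$ comparable to $n|x|$. The paper splits ${\rm III}$ into $\{|y|\le\tfrac{n\delta}{2}|x|\}$, where a Taylor expansion of $\phi$ about the real axis gives $\re\bigl(-(n+m)i\phi(z)\bigr)\le-(n+m)i\phi(x)+O(y^2/n)+O(|y|/n)$ and hence the $x$-decay directly, and its complement, where the elementary bound $\re(-ni\phi(z))\le n|z||\sqrt{z^2-1}|+(n+m)\log(|z|+|\sqrt{z^2-1}|)$ is used and the resulting growth (at most quadratic in $y$ with an $O(1/n)$ coefficient) is absorbed by $\sqrt{\erfc(2|y|/\alpha)}$. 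In both sub-cases the Gaussian decay in $y$ and the exponential decay in $x$ are obtained simultaneously.
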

\begin{proof}
    By \eqref{eq. rational rep prekernel} and \eqref{eq. integral rep prekernel}, it is sufficient to show that
    \begin{equation}
        \psi_{n+m}(\sqrt{2n} z) \sqrt{\erfc\Big( \frac{2 |y|}{\alpha} \Big)} = O(e^{- c y^2} e^{- d n |x| \sqrt{x^2-1}})
    \end{equation}
    for some constants $c,d>0$ and a fixed integer $m$.
    
    Suppose $z \in {\rm I}$.
    Since $\phi(z)$ and $\theta_m(z)$ are analytic in ${\rm I}$, we have 
    \begin{align}
    \begin{split} \label{eq. phi theta Taylor expansion}
        \Big| \phi(z) - \phi(x) - \phi'(x) i \frac{y}{n} \Big| &\leq \frac{y^2}{2 n^2} \max_{t \in [x, z]}\{|\phi''(t)|\},
        \\
        \Big| \theta_m(z) - \theta_m(x) \Big| &\leq \frac{y}{n} \max_{t \in [x, z]}\{|\theta_m'(t)|\}.
    \end{split}
    \end{align}
    Here, $[x,z]$ denotes a line segment in the complex plane connecting $x$ and $z$.
    Note that $\phi''(z) = -2z/\sqrt{1-z^2}$ and $\theta_m'(z) = -1/\sqrt{1-z^2}$. Thus, there exist some constants $c_1, c_2>0$ such that
    \begin{equation*}
        \im[n\phi(z) - \theta_m(z)] \leq 2 y \sqrt{1-x^2} + \frac{y^2}{2n} c_1 + \frac{|y|}{n} c_2
    \end{equation*}
    for any $z \in {\rm I}$.
 We compare the asymptotics of \(\psi_{n+m}(\sqrt{2n}z)\) and \(\erfc(2|y|/\alpha)\) using \eqref{eq. Plancherel-Rotach osc near R} and \eqref{eq. asymp of erfc}, and then conclude 
    \begin{equation*}
        \psi_{n+m}(\sqrt{2n} z)\sqrt{ \erfc(\frac{2 |y|}{\alpha})} = \exp\Big( -\frac{y^2}{\alpha^2} - n x \sqrt{x^2-1} \Big) O(1).
    \end{equation*}

    Next assume $z \in {\rm III}$.
    We further divide the domain into
    \begin{align*}
        {\rm III}^{(1)} = \{ z \in {\rm III}: |y| \leq \tfrac{n \delta}{2} |x| \},
        \qquad 
        {\rm III}^{(2)} = \{ z \in {\rm III}: |y| > \tfrac{n \delta}{2} |x| \},
    \end{align*}
    and analyse the real part of the exponent in \eqref{eq. DKMVZ99 Plancherel-Rotach exp}.
    By Taylor expansion of \eqref{eq. phi theta Taylor expansion}, there exists some constants $c_3, c_4>0$ such that
    \begin{align*}
        \re\Big[ -(n+m) i \phi(z) + \frac{m}{2} z i \phi'(z) \Big] \leq -(n+m) i \phi(x) + \frac{y^2}{2n} c_3 + \frac{|y|}{n} c_4
    \end{align*}
    for any $z \in {\rm III}^{(1)}$.
    In case of $z \in {\rm III}^{(2)}$, we have an elementary bound
    \begin{align*}
    \begin{split}
        \re\Big[ -(n+m) i \phi(z) + \frac{m}{2} z i \phi'(z)  \Big] &\leq n |z| |\sqrt{z^2-1}| + (n+m) \log(|z| + |\sqrt{z^2-1}|)
        \\
        &\leq \frac{2+\delta}{\delta} y \Big( \frac{2+\delta}{n\delta} y + 1 \Big)+ \frac{n+m}{n}\frac{4+2\delta}{\delta} y.
    \end{split}
    \end{align*}
    Since $z \in {\rm III}^{(2)}$, we have $|y| \geq n \delta \max\{1, \tfrac{|x|}{2}\}$.
    Comparing with \eqref{eq. asymp of erfc}, we obtain the desired asymptotic for both cases.
    
   Finally, suppose \( z \in \mathrm{II} \).  
Using \eqref{eq. asymp Airy(-z)} and \eqref{eq. asymp Airy(z)}, we obtain the desired asymptotic result through a similar estimation as shown in the cases \( z \in \mathrm{I} \) and \( \mathrm{III} \).  
This completes the proof.
\end{proof}

We are now ready to prove Theorem~\ref{Thm. LDP rate function} at weak non-Hermiticity. 

\begin{proof}[Proof of Theorem~\ref{Thm. LDP rate function} (ii)]
    We begin by expressing \eqref{eq. p(n,m) pfaffian integral representation} as
    \begin{equation*}
        p_{n,m} = \frac{p_{n,n}}{l!}\Big(\frac{2}{i}\Big)^{l} \int_{\HH^l} d^2\vzeta \, 
        \Pf\begin{bmatrix}
            \kappa_{n}(\zeta_j, \zeta_k) & \kappa_{n}(\zeta_j, \overline{\zeta}_k) 
            \smallskip 
            \\
            \kappa_{n}(\overline{\zeta}_j, \zeta_k) & \kappa_{n}(\overline{\zeta}_j, \overline{\zeta}_k)
        \end{bmatrix}_{j,k=1}^{l}
        \prod_{j=1}^{l} \erfc\Big( \frac{2 y_j}{\alpha} \Big),
    \end{equation*}
    using the notation \eqref{eq. def zeta, eta, z, w}.
    The Pfaffian admits an expansion
    \begin{equation*}
        \Pf \begin{bmatrix}
        \kappa_{n}(\zeta_j, \zeta_k) & \kappa_{n}(\zeta_j, \overline{\zeta}_k) \\
        \kappa_{n}(\overline{\zeta}_j, \zeta_k) & \kappa_{n}(\overline{\zeta}_j, \overline{\zeta}_k)
        \end{bmatrix}_{j,k=1}^l = \sum_{\sigma \in P_{2l}} \sgn(\pi_\sigma) \prod_{(j,k)\in \sigma} \kappa_{n}(\iota_j, \iota_k),
    \end{equation*}
    where $P_{2l}$ is a set of paired sequences given as
    \begin{align*}
        P_{2l} = \Big\{ (a_j, b_j)_{j=1}^l: \bigcup_{j=1}^l \{a_j, b_j\} = \{1, \ldots, 2l\}, \ a_1 < \cdots < a_l, \ a_j < b_j \text{ for all } j\Big\},
    \end{align*}
    and the permutation $\pi_\sigma$ associated to $\sigma = (a_j,b_j)_{j=1}^l \in P_{2l}$ is defined by
    \begin{align*}
        \pi_\sigma = \begin{pmatrix} 1 & 2 & \cdots & 2l-1 & 2l \\ a_1 & b_1 & \cdots & a_{2l} & b_{2l}\end{pmatrix},
    \end{align*}
    and $\iota_k$'s are defined by
    \begin{align*}
        &\iota_{2j-1} = \zeta_j, \quad \iota_{2j} = \overline{\zeta}_j, \qquad (j=1, \dots, l). 
    \end{align*} 
    Using the expansion, we have
    \begin{equation} \label{eq. proof of Thm1.1 wH ref 1}
        p_{n,m} = \frac{p_{n,n}}{l!} \Big( \frac{2}{i} \Big)^l \sum_{\sigma \in P_{2l}} \sgn(\pi_\sigma) \int_{\HH^l} d^2\vzeta \prod_{(a,b)\in \sigma} \kappa_{n}(\iota_a, \iota_b) \prod_{j=1}^l \erfc\Big(\frac{2 y_j}{\alpha}\Big).
    \end{equation}

  We first consider the summand in \eqref{eq. proof of Thm1.1 wH ref 1} with the index \(\sigma = (2j-1, 2j)_{j=1}^l\).  
In this case, the integral can be factorized, allowing us to utilise the result from the case \(l=1\) given by \eqref{eq. LDP wH l=1}.
    This yields
    \begin{align}
    \begin{split} \label{eq. integral of kappa factorized}
        \int_{\HH^l} d^2\vzeta \prod_{(a,b)\in \sigma} \kappa_{n}(\iota_a, \iota_b) \prod_{j=1}^l \erfc\Big(\frac{2 y_j}{\alpha}\Big) &= \prod_{j=1}^l \int_{\HH} d^2\zeta_j \, \kappa_{n}(\zeta_j, \overline{\zeta}_j) \erfc\Big(\frac{2 y_j}{\alpha}\Big) = \Big( \frac{i}{2} \frac{p_{n,n-2}}{p_{n,n}} \Big)^l.
    \end{split}
    \end{align} 
   Next, we turn to the cases with general $\sigma \in P_{2l}$.
 By using Lemmas~\ref{lem. asymp prekernel kappa(z,z)}, \ref{lem. asymp prekernel kappa(z,w)} and \ref{lem. asymp prekernel kappa(z,w) far away}, we have
    \begin{equation*}
        \int_{\HH^l} d^2\vzeta \prod_{(a,b)\in \sigma} \kappa_{n}(\iota_a, \iota_b) \prod_{j=1}^l \erfc\Big(\frac{2 y_j}{\alpha}\Big) = \int_{({\rm I_n} \cup {\rm II_n})^l} d^2\vzeta \prod_{(a,b)\in \sigma} \kappa_{n}(\iota_a, \iota_b) \prod_{j=1}^l \erfc\Big(\frac{2 y_j}{\alpha}\Big) \Big(1 + O(e^{-n^{\epsilon_i}})\Big).
    \end{equation*}
    In particular, by \eqref{eq. integral of kappa factorized}, this implies
    \begin{equation*}
        \int_{{\rm I}_n^l} d^2\vzeta \prod_{j=1}^l \cosh(2y_j\sqrt{1-x_j^2}) \prod_{j=1}^l \erfc\Big(\frac{2 y_j}{\alpha}\Big) =   \Big( \frac{i}{2} \frac{p_{n,n-2}}{p_{n,n}} \Big)^l (1 + O(n^{-\epsilon})) = C n^l (1 + O(n^{-\epsilon}))
    \end{equation*}
    for some constants $C>0$ and $\epsilon \in (0,1)$.
    Furthermore, for any $\vzeta \in ({\rm I_n} \cup {\rm II_n})^l$, we have
    \begin{align*}
        \prod_{(a,b)\in \sigma} \kappa_{n}(\iota_a, \iota_b) = \bigg( \prod_{j=1}^l \cosh(4y_j\sqrt{1-x_j^2}) \, \bigg) O(n^l).
    \end{align*}
    With the notation \eqref{eq. def zeta, eta, z, w}, let us define a set
    \begin{align*}
        \Upsilon_n = \{ \vzeta \in ({\rm I}_n \cup {\rm II}_n)^l : |x_j - x_k| > n^{\epsilon_r-1} \text{ for } 1 \leq j \neq k \leq l \}.
    \end{align*}
    If $\vzeta \in \Upsilon_n$ and $\sigma \neq (2j-1, 2j)_{j=1}^l$, then we have
    \begin{align*}
        \prod_{(a,b)\in \sigma} \kappa_{n}(\iota_a, \iota_b) = \bigg( \prod_{j=1}^l \cosh(2y_j\sqrt{1-x_j^2}) \, \bigg) O(n^{l - \epsilon'})
    \end{align*}
    for some constant $\epsilon'>0$.
    Combining the above, for $\sigma \neq (2j-1, 2j)_{j=1}^l$, we have 
    \begin{align*}
        \int_{({\rm I_n} \cup {\rm II_n})^l} d^2\vzeta \prod_{(a,b)\in \sigma} \kappa_{n}(\iota_a, \iota_b) \prod_{j=1}^l \erfc\Big(\frac{2 y_j}{\alpha}\Big) &= \bigg( \int_{\Upsilon_n} + \int_{({\rm I_n} \cup {\rm II_n})^l \setminus \Upsilon_n} \bigg) d^2\vzeta \prod_{(a,b)\in \sigma} \kappa_{n}(\iota_a, \iota_b) \prod_{j=1}^l \erfc\Big(\frac{2 y_j}{\alpha}\Big)
        \\
        &= O(n^{l-\epsilon'}) + O(n^{l+\epsilon_r-1}).
    \end{align*}
    Combining all of the above, we obtain 
    \begin{align*}
        p_{n,m} &= \frac{p_{n,n}}{l!} \Big( \frac{2}{i} \Big)^l \prod_{j=1}^l \int_{\HH} d^2\zeta_j \, \kappa_{n}(\zeta_j, \overline{\zeta}_j) \erfc\Big(\frac{2 y_j}{\alpha}\Big) \Big( 1 + O(n^{-\epsilon''}) \Big) =  \frac{p_{n,n}}{l!} \Big(\frac{p_{n,n-2}}{p_{n,n}} \Big)^l \Big( 1 + O(n^{-\epsilon''}) \Big)
    \end{align*}
    for some constant $\epsilon''>0$. This completes the proof. 
\end{proof}

We provide a heuristic explanation for why the pairing $\sigma = (2j - 1, 2j)_{j = 1}^l$ contributes to the leading asymptotics in \eqref{eq. proof of Thm1.1 wH ref 1}.  
As in the strong non-Hermiticity regime, one can compute the approximated potential $Q_n^{(r)}(z)$ defined in \eqref{eq. def of Q_n^(r)}.  
However, since $\tau = 1 - \alpha^2 / n$, the difference $Q_n^{(r)}(z_n^\star) - Q_n^{(r)}(0)$ in \eqref{eq. def of z star} vanishes as $n \to \infty$.  
Instead, $Q_n^{(r)}(z)$ forms a potential well in the upper half-plane near the real axis, see Figure~\ref{fig. 3d plot potential} for an illustration.  
As a result, the complex eigenvalues are typically distributed within this potential well, which forms an elongated neighbourhood along $[-2,2] \subset \mathbb{C}$.  
This heuristically suggests that the typical distance between complex eigenvalues in the upper half-plane is of order $1$, implying that the interaction between distinct complex-conjugate pairs may be neglected.  
In terms of the Pfaffian in \eqref{eq. proof of Thm1.1 wH ref 1}, this suggests that the block-diagonal contribution corresponding to $\sigma = (2j - 1, 2j)_{j = 1}^l$ dominates in the computation.

\subsection*{Acknowledgements} The authors are grateful to the DFG-NRF International Research Training Group IRTG 2235 supporting the Bielefeld-Seoul graduate exchange programme (NRF-2016K2A9A2A13003815). 
Sung-Soo Byun was supported by the New Faculty Startup Fund at Seoul National University and by the National Research Foundation of Korea grant (RS-2023-00301976, RS-2025-00516909).
The authors are very grateful to the anonymous referee for useful comments and in particular for an improvement of the proof of Lemma~\ref{lem. asymp vandermonde integral}.


\begin{thebibliography}{999}

\bibitem{AB23} G. Akemann and S.-S. Byun, \emph{The Product of $m$ real $N \times N$ Ginibre matrices: Real eigenvalues in the critical regime $m=O(N)$}, Constr. Approx. \textbf{59} (2024), 31--59. 

\bibitem{ABES23}  G. Akemann, S.-S. Byun, M. Ebke and G. Schehr, \emph{Universality in the number variance and counting statistics of the real and symplectic Ginibre ensemble}, J. Phys. A \textbf{56} (2023), 495202. 

\bibitem{ADM24} G. Akemann, M.~Duits and L. D.~Molag, \emph{Fluctuations in various regimes of non-Hermiticity and a holographic principle}, arXiv:2412.15854. 

\bibitem{AK07} G.~Akemann and E.~Kanzieper, \emph{Integrable structure of Ginibre's ensemble of real random matrices and a {P}faffian integration theorem}, J. Stat. Phys. \textbf{129} (2007), 1159--1231. 

\bibitem{ACCL24} Y. Ameur, C. Charlier, J. Cronvall and J. Lenells, \emph{Disk counting statistics near hard edges of random normal matrices: the multi-component regime}, Adv. Math. \textbf{441} (2024), 109549.

\bibitem{BFK21} G. Ben Arous, Y. V. Fyodorov and B. A. Khoruzhenko. \emph{Counting equilibria of large complex systems by instability index}, Proc. Natl. Acad. Sci. USA \textbf{118} (2021), e2023719118.

\bibitem{BK07} A. Borodin and E. Kanzieper, \emph{A note on the Pfaffian integration theorem}, J. Phys. A \textbf{40} (2007), F849--F855.

\bibitem{BS06}
A. Borodin and E. Strahov, \emph{Averages of characteristic polynomials in random matrix theory}, Commun. Pure Appl. Math. \textbf{59} (2006), 161--253.

\bibitem{BG13} G. Borot and A. Guionnet, \emph{Asymptotic expansion of $\beta$ matrix models in the one-cut regime}, Commun. Math. Phys. \textbf{317} (2013), 447--483.

\bibitem{BPS95} A. Boutet de Monvel, L. Pastur and M. Shcherbina, \emph{On the statistical mechanics approach in the random matrix theory: Integrated density of states}, J. Stat. Phys. \textbf{79} (1995), 585--611.

\bibitem{BF25} S.-S.~Byun and P. J.~Forrester, \emph{Progress on the study of the Ginibre ensembles}, KIAS Springer Ser. Math. \textbf{3} Springer, 2025, 221pp. 

\bibitem{BKLL23} S.-S. Byun, N.-G. Kang, J. O. Lee and J. Lee, \emph{Real eigenvalues of elliptic random matrices}, Int. Math. Res. Not. \textbf{2023} (2023), 2243--2280.

\bibitem{BL24} S.-S. Byun and Y.-W. Lee, \emph{Finite size corrections for real eigenvalues of the elliptic Ginibre matrices}, Random Matrices Theory Appl. \textbf{13} (2024), No. 01, 2450005. 

\bibitem{BMS23} S.-S.~Byun, L. D.~Molag and N.~Simm, \emph{Large deviations and fluctuations of real eigenvalues of elliptic random matrices}, Electron. J. Probab. \textbf{30} (2025), 1--40.

\bibitem{BN25} S.-S. Byun and K. Noda, \emph{Real eigenvalues of asymmetric Wishart matrices: Expected number, global density and integrable structure}, arXiv:2503.14942. 

\bibitem{Ch22} C. Charlier, \emph{Asymptotics of determinants with a rotation-invariant weight and discontinuities along circles}, Adv. Math. \textbf{408} (2022), 108600.

\bibitem{DG07}
P. Deift and D. Gioev, \emph{Universality in random matrix theory for orthogonal and symplectic ensembles}, Int. Math. Res. Pap. IMRP \textbf{2007}, rpm004.

\bibitem{DKMVZ99}
P. Deift, T. Kriecherbauer, K. T.-R. McLaughlin, S. Venakides and X. Zhou, \emph{Strong asymptotics of orthogonal polynomials with respect to exponential weights}, Commun. Pure Appl. Math. \textbf{52} (1999), 1491--1552.


\bibitem{DLMS24} B. De Bruyne, P. Le Doussal, S. N. Majumdar and G. Schehr, \emph{Linear statistics for Coulomb gases: higher order cumulants}, J. Phys. A \textbf{57} (2024), 155002.  

\bibitem{Ed97}
A. Edelman, \emph{The probability that a random real Gaussian matrix has k real eigenvalues, related distributions, and the circular law}, J. Multivariate Anal. \textbf{60} (1997), 203--232.

\bibitem{EKS94}
A. Edelman, E. Kostlan and M. Shub, \emph{How many eigenvalues of a random matrix are real?} J. Amer. Math. Soc. \textbf{7} (1994), 247--267.

\bibitem{Efe97} K. B. Efetov, \emph{Directed quantum chaos}, Phys. Rev. Lett. \textbf{79} (1997), 491.


\bibitem{FS23a} W. FitzGerald and N. Simm, \emph{Fluctuations and correlations for products of real asymmetric random matrices}, Ann. Inst. Henri Poincar\'e Probab. Stat. \textbf{59} (2023), 2308--2342.

\bibitem{Fo14} P. J. Forrester, \emph{Probability of all eigenvalues real for products of standard Gaussian matrices}, J. Phys. A \textbf{47} (2014), 065202. 

\bibitem{Fo15a}  P. J. Forrester, \emph{Diffusion processes and the asymptotic bulk gap probability for the real Ginibre ensemble}, J. Phys. A \textbf{48} (2015), 324001.

\bibitem{Fo24} P. J.~Forrester, \emph{Local central limit theorem for real eigenvalue fluctuations of elliptic GinOE matrices}, Electron. Commun. Probab. \textbf{29} (2024), 1--11.

\bibitem{FI16} P. J. Forrester and J. R. Ipsen, \emph{Real eigenvalue statistics for products of asymmetric real Gaussian matrices}, Linear Algebra Appl. \textbf{510} (2016), 259--290.

\bibitem{FIK20} P. J. Forrester, J. R. Ipsen and S. Kumar, \emph{How many eigenvalues of a product of truncated orthogonal matrices are real?}, Exp. Math. \textbf{29} (2020), 276--290.

\bibitem{FK18}  P. J. Forrester and S. Kumar, \emph{The probability that all eigenvalues are real for products of truncated real orthogonal random matrices}, J. Theoret. Probab. \textbf{31} (2018) 2056--2071.
 
\bibitem{FM09}
P. J. Forrester and A. Mays, \emph{A method to calculate correlation functions for $\beta = 1$ random matrices of odd size}, J. Stat. Phys. \textbf{134} (2009), 443--462.

\bibitem{FM12} P. J. Forrester and A. Mays, \emph{Pfaffian point processes for the Gaussian real generalised eigenvalue problem}, Prob. Theory and Rel. Fields \textbf{154} (2012), 1--47.
 
\bibitem{FN07} P. J. Forrester and T. Nagao, \emph{Eigenvalue statistics of the real Ginibre ensemble}, Phys. Rev. Lett. \textbf{99} (2007), 050603.

\bibitem{FN08} P. J. Forrester and T. Nagao, \emph{Skew orthogonal polynomials and the partly symmetric real Ginibre ensemble}, J. Phys. A \textbf{41} (2008), 375003.

\bibitem{Fyo16} Y. V. Fyodorov, \emph{Topology trivialization transition in random non-gradient autonomous ODEs on a sphere}, J. Stat. Mech. Theory Exp. \textbf{21} (2016), 124003.

\bibitem{FK16} Y. V. Fyodorov and B. A. Khoruzhenko, \emph{Nonlinear analogue of the May-Wigner instability transition}, Proc.  Nat. Acad. Science  \textbf{113} (2016), 6827--6832.

\bibitem{FKS97} Y. V. Fyodorov, B. A. Khoruzhenko and H.-J. Sommers, \emph{Almost-Hermitian random matrices: crossover from Wigner-Dyson to Ginibre eigenvalue statistics}, Phys. Rev. Lett. \textbf{79} (1997), 557--560. 

\bibitem{FKS97a} Y. V. Fyodorov, B. A. Khoruzhenko and H.-J. Sommers, \emph{Almost-Hermitian random matrices: eigenvalue density in the complex plane}, Phys. Lett. A. \textbf{226} (1997), 46--52.
 
\bibitem{FKS98}  Y. V. Fyodorov, B. A. Khoruzhenko and H.-J. Sommers, \emph{Universality in the random matrix spectra in the regime of weak non-Hermiticity},  Ann. Inst. H. Poincar\'e Phys. Th\'eor. \textbf{68} (1998), 449--489.
 
\bibitem{FKP24} Y. V. Fyodorov, B. A. Khoruzhenko and T. Prellberg, \emph{Zeros of conditional Gaussian analytic functions, random sub-unitary matrices and $q$-series}, arXiv:2412.06086.

\bibitem{GLX23} A. Goel, P. Lopatto and X. Xie, \emph{Central limit theorem for the complex eigenvalues of Gaussian random matrices}, Electron. Commun. Probab. \textbf{29} (2024), Paper No. 16, 13 pp.
 
\bibitem{GN71} M. Geller and E. W. Ng, \emph{A table of integrals of the error function. II. Additions and corrections}, J. Res. Nat. Bur. Standards Sect. B, \textbf{75B}, pp. 149–163.

\bibitem{GR14} I. S. Gradshteyn and I. M. Ryzhik, \emph{Table of Integrals, Series, and Products}, Elsevier: Academic Press, 2014.

\bibitem{Joh98} K. Johansson, \emph{On fluctuations of eigenvalues of random Hermitian matrices}, Duke Math. J. \textbf{91} (1998), 151--204.

\bibitem{KPTTZ15} E.~Kanzieper, M.~Poplavskyi, C.~Timm, R.~Tribe and O.~Zaboronski, \emph{What is the probability that a large random matrix has no real eigenvalues?}, Ann.~Appl.~Probab. \textbf{26} (2016), 2733--2753.

\bibitem{Kiv24} P. Kivimae, \emph{Moments of characteristic polynomials of non-symmetric random matrices}, arXiv:2410.07478. 

\bibitem{LMS22} A. Little, F. Mezzadri and N. Simm, \emph{On the number of real eigenvalues of a product of truncated orthogonal random matrices}, Electron. J. Probab. \textbf{27} (2022), 1--32.

\bibitem{Luh18} K. Luh, \emph{Complex random matrices have no real eigenvalues}, Random Matrices Theory Appl. \textbf{7} (2018), 1750014.

\bibitem{Ma15} I. G. Macdonald, \emph{Symmetric functions and Hall polynomials}, Second edition, Oxford University Press, New York, NY, 2015.

\bibitem{MPTW16}
L. C. G. del Molino, K. Pakdaman, J. Touboul and G. Wainrib, \emph{The real Ginibre ensemble with $k=O(n)$ real eigenvalues}, J. Stat. Phys. \textbf{163} (2016), 303--323.

\bibitem{NIST}  F. W. J. Olver, D. W. Lozier, R. F. Boisvert and C. W. Clark, eds. \emph{NIST Handbook of Mathematical Functions}, Cambridge: Cambridge University Press, 2010.

\bibitem{Shc11}
M. Shcherbina, \emph{Orthogonal and symplectic matrix models: universality and other properties}, Commun. Math. Phys. \textbf{307} (2011), no.3, 761--790.

\bibitem{Shc13}
M. Shcherbina, \emph{Fluctuations of linear eigenvalue statistics of $\beta$ matrix models in the multi-cut regime}, J. Stat. Phys. \textbf{151} (2013), no.6, 1004--1034.

\bibitem{Si17} N. Simm, \emph{Central limit theorems for the real eigenvalues of large Gaussian random matrices}, Random Matrices Theory Appl. \textbf{6} (2017), 1750002. 

\bibitem{Si17a} N. Simm, \emph{On the real spectrum of a product of Gaussian matrices}, Electron. Commun. Probab. \textbf{22} (2017), 11.

\bibitem{Si07} C. D. Sinclair, \emph{Averages over {G}inibre's ensemble of random real matrices}, Int. Math. Res. Not. \textbf{2007} (2007), rnm015, 15 pp.

\bibitem{TV15}  T. Tao and V. Vu, \emph{Random matrices: universality of local spectral statistics of non-Hermitian matrices}, Ann. Probab. \textbf{43} (2015), 782--874.
 
 \end{thebibliography}
\end{document}